\documentclass[12pt, reqno]{amsart}
\usepackage{amsmath, amsthm, amscd, amsfonts, amssymb, graphicx, color}
\usepackage[bookmarksnumbered, colorlinks, plainpages]{hyperref}
\hypersetup{colorlinks=true,linkcolor=red, anchorcolor=green,
citecolor=cyan, urlcolor=red, filecolor=magenta, pdftoolbar=true}

\textheight 22.5truecm \textwidth 14.5truecm
\setlength{\oddsidemargin}{0.35in}\setlength{\evensidemargin}{0.35in}

\setlength{\topmargin}{-.5cm}

\newtheorem{theorem}{Theorem}[section]
\newtheorem{lemma}[theorem]{Lemma}

\newtheorem{corollary}[theorem]{Corollary}
\theoremstyle{definition}

\theoremstyle{remark}
\newtheorem{remark}[theorem]{Remark}
\numberwithin{equation}{section}

\begin{document}

\setcounter{page}{1}

\title[$A$-numerical radius inequalities]
{$A$-numerical radius inequalities for semi-Hilbertian space operators}

\author[A. Zamani]{Ali Zamani}

\address{Department of Mathematics, Farhangian University, Tehran, Iran}
\email{zamani.ali85@yahoo.com}

\subjclass[2010]{Primary 47A05; Secondary 46C05, 47B65, 47A12.}

\keywords{Positive operator, semi-inner product, $A$-adjoint operator, $A$-numerical radius, inequality.}

\begin{abstract}
Let $A$ be a positive bounded operator on a Hilbert space 
$\big(\mathcal{H}, \langle \cdot, \cdot\rangle \big)$.
The semi-inner product ${\langle x, y\rangle}_A := \langle Ax, y\rangle$, $x, y\in\mathcal{H}$
induces a semi-norm ${\|\cdot\|}_A$ on $\mathcal{H}$.
Let ${\|T\|}_A$ and $w_A(T)$ denote the $A$-operator semi-norm
and the $A$-numerical radius of an operator $T$ in semi-Hilbertian space
$\big(\mathcal{H}, {\|\cdot\|}_A\big)$, respectively.
In this paper, we prove the following characterization of $w_A(T)$
\begin{align*}
w_A(T) = \displaystyle{\sup_{\alpha^2 + \beta^2 = 1}}
{\left\|\alpha \frac{T + T^{\sharp_A}}{2} +
\beta \frac{T - T^{\sharp_A}}{2i}\right\|}_A,
\end{align*}
where $T^{\sharp_A}$ is a distinguished $A$-adjoint operator of $T$.
We then apply it to find upper and lower bounds for $w_A(T)$.
In particular, we show that
\begin{align*}
\frac{1}{2}{\|T\|}_A \leq \max\Big\{\sqrt{1 - {|\cos|}^2_AT}, \frac{\sqrt{2}}{2}\Big\}w_A(T)\leq w_A(T),
\end{align*}
where ${|\cos|}_AT$ denotes the $A$-cosine of angle of $T$.
Some upper bounds for the $A$-numerical radius of commutators, anticommutators,
and products of semi-Hilbertian space operators are also given.
\end{abstract} \maketitle
%%%%%%%%%%%%%%%%%%%%%%%%%%%%%%%%%%%%55
\section{Introduction and preliminaries}
%%%%%%%%%%%%%%%%%%%%%%%%%%%%%%%%%%%%%%%%
Let $\mathbb{B}(\mathcal{H})$ denote the $C^{\ast}$-algebra of all bounded
linear operators on a complex Hilbert space $\big(\mathcal{H}, \langle \cdot, \cdot\rangle \big)$ and the corresponding norm
$\|\cdot\| $. Let the symbol $I$ stand for the identity operator on $\mathcal{H}$.
If $T\in\mathbb{B}(\mathcal{H})$, then we denote by $\mathcal{R}(T)$
the range of $T$, and by $\overline{\mathcal{R}(T)}$ the norm closure of $\mathcal{R}(T)$.
Throughout this paper, we assume that $A\in\mathbb{B}(\mathcal{H})$ is a positive operator
and that $P$ is the orthogonal projection onto $\overline{\mathcal{R}(A)}$.
Recall that $A$ is called positive, denoted by $A\geq 0$,
if $\langle Ax, x\rangle\geq 0$ for all $x\in\mathcal{H}$.
Such an operator $A$ induces a positive semidefinite sesquilinear
form ${\langle \cdot, \cdot\rangle}_A: \,\mathcal{H}\times \mathcal{H} \rightarrow \mathbb{C}$
defined by ${\langle x, y\rangle}_A = \langle Ax, y\rangle$, \,$x, y\in\mathcal{H}$.
Denote by ${\|\cdot\|}_A$ the seminorm induced by ${\langle \cdot, \cdot\rangle}_A$, that is,
${\|x\|}_A = \sqrt{{\langle x, x\rangle}_A}$
for every $x\in\mathcal{H}$.
It can be easily seen that ${\|\cdot\|}_A$ is a norm if and only if $A$ is an injective operator,
and  that $(\mathcal{H}, {\|\cdot\|}_A)$ is a complete space if and only if $\mathcal{R}(A)$ is closed in $\mathcal{H}$.
The semi-inner product ${\langle \cdot, \cdot\rangle}_A$ induces a semi-norm on a certain subspace of $\mathbb{B}(\mathcal{H})$.
Namely, given $T\in\mathbb{B}(\mathcal{H})$, if there exists $c > 0$ such that ${\|Tx\|}_A \leq c{\|x\|}_A$
for all $x \in \overline{\mathcal{R}(A)}$, then it holds that
\begin{align*}
{\|T\|}_A := \displaystyle{\sup_{x \in \overline{\mathcal{R}(A)}, x\neq 0}} \frac{{\|Tx\|}_A}{{\|x\|}_A}
= \inf\Big\{c>0: \,\, {\|Tx\|}_A \leq c{\|x\|}_A, x\in \mathcal{H}\Big\}< \infty.
\end{align*}
We set $\mathbb{B}^{A}(\mathcal{H}) :=\Big\{T\in \mathbb{B}(\mathcal{H}): \, \, {\|T\|}_A < \infty\Big\}$.
It can be seen that $\mathbb{B}^{A}(\mathcal{H})$ is not generally a subalgebra of $\mathbb{B}(\mathcal{H})$
and ${\|T\|}_A = 0$ if and only if $ATA = 0$.
In addition, for $T\in \mathbb{B}^{A}(\mathcal{H})$, we have
\begin{align*}
{\|T\|}_A = \sup\Big\{|{\langle Tx, y\rangle}_A|:\, \, x, y\in \overline{\mathcal{R}(A)}, {\|x\|}_A = {\|y\|}_A = 1\Big\}.
\end{align*}
An operator $T$ is called $A$-positive if $AT\geq 0$.
Note that if $T$ is $A$-positive, then
\begin{align*}
{\|T\|}_A = \sup\Big\{{\langle Tx, x\rangle}_A:\, \, x\in \mathcal{H}, {\|x\|}_A = 1\Big\}.
\end{align*}
For $T\in \mathbb{B}(\mathcal{H})$, an operator $R\in \mathbb{B}(\mathcal{H})$
is called an $A$-adjoint of $T$ if for every $x, y\in \mathcal{H}$,
we have ${\langle Tx, y\rangle}_A = {\langle x, Ry\rangle}_A$, i.e., $AR = T^*A$.
The existence of an $A$-adjoint operator is not guaranteed. In fact, an operator $T\in \mathbb{B}(\mathcal{H})$ may admit none, one or
many $A$-adjoints. The set of all operators which admit $A$-adjoints is denoted by $\mathbb{B}_{A}(\mathcal{H})$.
Note that $\mathbb{B}_{A}(\mathcal{H})$ is a subalgebra of $\mathbb{B}(\mathcal{H})$ which is neither closed nor dense
in $\mathbb{B}(\mathcal{H})$. Moreover, the following inclusions
$\mathbb{B}_{A}(\mathcal{H}) \subseteq \mathbb{B}^{A}(\mathcal{H}) \subseteq \mathbb{B}(\mathcal{H})$ hold
with equality if $A$ is injective and has a closed range.

If $T\in\mathbb{B}_{A}(\mathcal{H})$, the reduced solution of the equation $AX = T^*A$ is a distinguished $A$-adjoint
operator of $T$, which is denoted by $T^{\sharp_A}$; see \cite{M.K.X}.
Note that, $T^{\sharp_A} = A^{\dag}T^*A$ in which $A^{\dag}$ is the Moore--Penrose inverse of $A$.
It is useful that if $T\in\mathbb{B}_{A}(\mathcal{H})$, then $AT^{\sharp_A} = T^*A$.
An operator $T\in\mathbb{B}(\mathcal{H})$ is said to be $A$-selfadjoint if $AT$ is selfadjoint,
i.e., $AT = T^*A$. Observe that if $T$ is $A$-selfadjoint, then $T\in\mathbb{B}_{A}(\mathcal{H})$.
However it does not hold, in general, that $T = T^{\sharp_A}$.
For example, consider the operators
$A = \begin{bmatrix}
1 & 1 \\
1 & 1
\end{bmatrix}$
and
$T = \begin{bmatrix}
2 & 2 \\
0 & 0
\end{bmatrix}$.
Then simple computations show that $T$ is $A$-selfadjoint and
$T^{\sharp_A} = \begin{bmatrix}
1 & 1 \\
1 & 1
\end{bmatrix} \neq T$.
More precisely, if $T\in\mathbb{B}_{A}(\mathcal{H})$, then $T = T^{\sharp_A}$ if and only if
$T$ is $A$-selfadjoint and $\mathcal{R}(T) \subseteq \overline{\mathcal{R}(A)}$.
Notice that if $T\in\mathbb{B}_{A}(\mathcal{H})$, then $T^{\sharp_A}\in\mathbb{B}_{A}(\mathcal{H})$,
$(T^{\sharp_A})^{\sharp_A} = PTP$ and $\big((T^{\sharp_A})^{\sharp_A}\big)^{\sharp_A} = T^{\sharp_A}$.
In addition, $T^{\sharp_A}T$, $TT^{\sharp_A}$ are $A$-selfadjoint and $A$-positive and so we have
\begin{align*}
{\|T^{\sharp_A}T\|}_A = {\|TT^{\sharp_A}\|}_A = {\|T\|}^2_A = {\|T^{\sharp_A}\|}^2_A.
\end{align*}
Furthermore, if $T, S\in\mathbb{B}_{A}(\mathcal{H})$, then $(TS)^{\sharp_A} = S^{\sharp_A}T^{\sharp_A}$,
${\|TS\|}_A \leq {\|T\|}_A{\|S\|}_A$ and ${\|Tx\|}_A \leq {\|T\|}_A{\|x\|}_A$ for all $x\in \mathcal{H}$.

For proofs and more facts about this class of operators, we refer the reader to
\cite{Ar.Co.Go.1, Ar.Co.Go.2} and their references.

In recent years, several results covering some classes of operators
on a complex Hilbert space $\big(\mathcal{H}, \langle \cdot, \cdot\rangle\big)$
are extended to $\big(\mathcal{H}, {\langle \cdot, \cdot\rangle}_A\big)$
(see, e.g., \cite{Ar.Co.Go.2, B.F.O, Ba.Ka.Ah, Faghih, Fo.Go, Go, Wi.Se.Su, SA.S, Su, Z.3}).

%%%%%%%%%%%%%%%%%%%%%%%%%%%%%%%%%%%%%%%%
The numerical radius of $T\in\mathbb{B}(\mathcal{H})$ is defined by
\begin{align*}
w(T) = \sup \Big\{|\langle Tx, x\rangle|:\,\,x\in\mathcal{H},\|x\| = 1\Big\}.
\end{align*}
This concept is useful in studying linear operators and has attracted the attention of many authors
in the last few decades (e.g., see \cite{D.2, G.R, K.1, M.S, Z.M.C.N}, and their references).

It is well known that $w(\cdot)$ defines a norm on
$\mathbb{B}(\mathcal{H})$ such that for all $T\in\mathbb{B}(\mathcal{H})$,
\begin{align}\label{1.1}
\frac{1}{2}\|T\| \leq w(T)\leq \|T\|.
\end{align}
The inequalities in (\ref{1.1}) are sharp. The first inequality becomes an equality if $T^{2}=0$.
The second inequality becomes an equality if $T$ is normal.

For more material about the numerical radius and
other results on numerical radius inequality,
see, e.g., \cite{H.K, G.R, K.1, K.M.Y}, and the references therein.

Some interesting numerical radius inequalities improving inequalities (\ref{1.1})
have been obtained by several mathematicians (see, e.g., \cite{A.K.1, A.K.2, D.1, Ga.Wu, H.K.S.2, K.M.Y, Y, Z.1}).

Motivated by theoretical study and applications, there have been many
generalizations of the numerical radius (e.g., see \cite{A.K.3, B.S, Bo.Ma, G.W.W, G.R, H.L.B, H.K.S.1, M.S, S.D.M, S.M.S, Sh, Z.2}).
One of these generalizations is the $A$-numerical radius of an operator $T\in \mathbb{B}(\mathcal{H})$ defined by
\begin{align*}
w_A(T) = \sup\Big\{\big|{\langle Tx, x\rangle}_A\big|: \,\,x\in \mathcal{H}, \,{\|x\|}_A = 1\Big\},
\end{align*}
see, e.g., \cite{Ba.Ka.Ah}.

Now, following by the Crawford number and the cosine of angle of an operator
$T\in \mathbb{B}(\mathcal{H})$ introduced by Gustafson and Rao in \cite{G.R},
we introduce the following notations
\begin{align*}
c_A(T) = \inf \Big\{|{\langle Tx, x\rangle}_A|:\,\,x\in\mathcal{H},{\|x\|}_A =1\Big\},
\end{align*}
\begin{equation*}
{|\cos|}_AT = \inf \left\{\frac{|{\langle Tx, x\rangle}_A|}{{\|Tx\|}_A{\|x\|}_A}: \,\, x\in \mathcal{H}, {\|Tx\|}_A{\|x\|}_A\neq 0\right\},
\end{equation*}
and
\begin{equation*}
{|\sin|}_AT = \sqrt{1 - {|\cos|}^2_AT}.
\end{equation*}
%%%%%%%%%%%%%%%%%%%%%%%%%%%%%%%%%%%%%%%%

The paper is organized as follows.

In Section 2, inspired by the numerical radius inequalities of bounded linear operators
in \cite{A.K.1}, \cite{A.K.2}, \cite{H.K.S.2}, \cite{K.M.Y}, \cite{Z.1} and by using some ideas of them,
we first state a useful characterization
of the $A$-numerical radius for $T\in\mathbb{B}_{A}(\mathcal{H})$ as follows:
\begin{align*}
w_A(T) = \displaystyle{\sup_{\alpha^2 + \beta^2 = 1}}
{\left\|\alpha \frac{T + T^{\sharp_A}}{2} +
\beta \frac{T - T^{\sharp_A}}{2i}\right\|}_A.
\end{align*}
This expression was motivated by \cite[Theorem 2.1]{K.M.Y}.
We then apply it to find upper and lower bounds for
the $A$-numerical radius of semi-Hilbertian space operators.
Particularly, for $T\in\mathbb{B}_{A}(\mathcal{H})$ we prove that
\begin{align*}
w_A(T) \leq \frac{\sqrt{2}}{2}\sqrt{{\big\|T T^{\sharp_A} + T^{\sharp_A} T\big\|}_A} \leq {\|T\|}_A,
\end{align*}
\begin{align*}
\frac{1}{2}{\|T\|}_A\leq \sqrt{\frac{w^2_A(T)}{2} + \frac{w_A(T)}{2}\sqrt{w^2_A(T) - c^2_A(T)}} \leq w_A(T),
\end{align*}
and
\begin{align*}
\frac{1}{2}{\|T\|}_A \leq \max\Big\{{|\sin|}_AT, \frac{\sqrt{2}}{2}\Big\}w_A(T)\leq w_A(T).
\end{align*}

In Section 3, some upper bounds for the $A$-numerical
radius of products of semi-Hilbertian space operators are given.
In particular, for $T, S\in\mathbb{B}_{A}(\mathcal{H})$ we show that
\begin{align*}
w_A(TS) \leq w_A(T){\|S\|}_A + \frac{1}{2}w_A\Big((TS)^{\sharp_A} \pm T^{\sharp_A} S\Big) \leq 2w_A(T){\|S\|}_A.
\end{align*}

In the last section we present some upper bounds for the $A$-numerical radius of commutators
and anticommutators of semi-Hilbertian space operators.
Particularly, for $T, S\in\mathbb{B}_{A}(\mathcal{H})$ we prove that
\begin{align*}
w_A(TS^{\sharp_A} \pm ST^{\sharp_A}) \leq {\big\|T^{\sharp_A} T + SS^{\sharp_A}\big\|}_A.
\end{align*}

Our results generalize recent numerical radius inequalities of bounded linear operators
due to Kittaneh et al. \cite{A.K.1, A.K.2, H.K, K.1, K.M.Y, Z.1}.
%%%%%%%%%%%%%%%%%%%%%%%%%%%%%%%%%%%%%%%%
%%%%%%%%%%%%%%%%%%%%%%%%%%%%%%%%%%%%%%%%
\section{Upper and lower bounds of the $A$-numerical radius of operators}
%%%%%%%%%%%%%%%%%%%%%%%%%%%%%%%%%
We start our work with the following lemmas.
To establish the first lemma we use some ideas of \cite[Theorem 1.3-1]{G.R}.
%%%%%%%%%%%%%%%%%%%%%%%%%%%%%%%%%
\begin{lemma}\label{l.2.1}
Let $T\in\mathbb{B}_{A}(\mathcal{H})$ be an $A$-selfadjoint operator. Then
\begin{align*}
w_A(T) = {\|T\|}_A.
\end{align*}
\end{lemma}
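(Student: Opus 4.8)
The plan is to prove the two inequalities $w_A(T)\le{\|T\|}_A$ and ${\|T\|}_A\le w_A(T)$ separately, only the second of which will use that $T$ is $A$-selfadjoint; this is the semi-Hilbertian analogue of the classical fact that the numerical radius and the operator norm coincide on selfadjoint operators. For the first inequality, given $x\in\mathcal{H}$ with ${\|x\|}_A=1$, the Cauchy--Schwarz inequality for the semi-inner product together with the bound ${\|Tx\|}_A\le{\|T\|}_A{\|x\|}_A$ (available since $T\in\mathbb{B}_{A}(\mathcal{H})$) gives $|{\langle Tx,x\rangle}_A|\le{\|Tx\|}_A{\|x\|}_A\le{\|T\|}_A$; taking the supremum over such $x$ yields $w_A(T)\le{\|T\|}_A$.

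For the reverse inequality I would first record the consequence of $A$-selfadjointness. Since $AT=T^{*}A$ and $A=A^{*}$, for all $x,y\in\mathcal{H}$ one has
\begin{align*}
{\langle Ty,x\rangle}_A=\langle ATy,x\rangle=\langle y,ATx\rangle=\overline{\langle ATx,y\rangle}=\overline{{\langle Tx,y\rangle}_A}.
\end{align*}
Hence one obtains the polarization-type identity
\begin{align*}
{\langle T(x+y),x+y\rangle}_A-{\langle T(x-y),x-y\rangle}_A=2{\langle Tx,y\rangle}_A+2{\langle Ty,x\rangle}_A=4\,\mathrm{Re}\,{\langle Tx,y\rangle}_A.
\end{align*}
Next I would observe that $|{\langle Tz,z\rangle}_A|\le w_A(T){\|z\|}^2_A$ for every $z\in\mathcal{H}$: if ${\|z\|}_A\neq0$ this follows by normalizing, while if ${\|z\|}_A=0$ then $|{\langle Tz,z\rangle}_A|\le{\|Tz\|}_A{\|z\|}_A=0$ by Cauchy--Schwarz. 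Combining this with the parallelogram law ${\|x+y\|}^2_A+{\|x-y\|}^2_A=2{\|x\|}^2_A+2{\|y\|}^2_A$, valid because ${\|\cdot\|}_A$ is induced by a semi-inner product, gives, for ${\|x\|}_A={\|y\|}_A=1$,
\begin{align*}
4\,\mathrm{Re}\,{\langle Tx,y\rangle}_A\le w_A(T)\big({\|x+y\|}^2_A+{\|x-y\|}^2_A\big)=4w_A(T).
\end{align*}

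Finally, replacing $x$ by $e^{i\theta}x$ (which changes neither ${\|x\|}_A$ nor membership in $\overline{\mathcal{R}(A)}$) and choosing $\theta$ so that $e^{i\theta}{\langle Tx,y\rangle}_A=|{\langle Tx,y\rangle}_A|$, the last display upgrades to $|{\langle Tx,y\rangle}_A|\le w_A(T)$ for all $x,y\in\overline{\mathcal{R}(A)}$ with ${\|x\|}_A={\|y\|}_A=1$. Taking the supremum and invoking the formula ${\|T\|}_A=\sup\{|{\langle Tx,y\rangle}_A|:\,x,y\in\overline{\mathcal{R}(A)},\ {\|x\|}_A={\|y\|}_A=1\}$ recorded in the introduction yields ${\|T\|}_A\le w_A(T)$, completing the argument. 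I do not expect a genuine obstacle: the only points needing care are the degeneracy of the seminorm (vectors with ${\|z\|}_A=0$, and the harmless phase rotation), and the single place where the hypothesis enters is the identity ${\langle Ty,x\rangle}_A=\overline{{\langle Tx,y\rangle}_A}$, which is exactly what $A$-selfadjointness provides.
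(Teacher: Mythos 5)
Your proposal is correct and follows essentially the same route as the paper: Cauchy--Schwarz for $w_A(T)\le{\|T\|}_A$, then polarization plus the parallelogram law and a phase rotation to bound $|{\langle Tx,y\rangle}_A|$ by $w_A(T)$, concluding via the supremum formula for ${\|T\|}_A$. Your explicit handling of the degenerate vectors with ${\|z\|}_A=0$ and the explicit identity ${\langle Ty,x\rangle}_A=\overline{{\langle Tx,y\rangle}_A}$ are minor refinements of the same argument.
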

\begin{proof}
Let $x \in \mathcal{H}$ with ${\|x\|}_A = 1$. By the Cauchy-Schwarz inequality, we have
\begin{align*}
\big|{\langle Tx, x\rangle}_A\big| \leq {\|Tx\|}_A {\|x\|}_A \leq {\|T\|}_A,
\end{align*}
and hence $w_A(T) = \sup\Big\{\big|{\langle Tx, x\rangle}_A\big|: \,\,x\in \mathcal{H}, \,{\|x\|}_A = 1\Big\}\leq {\|T\|}_A$.

Moreover, since $T$ is an $A$-selfadjoint operator,
for every $y, z \in \mathcal{H}$ such that ${\|y\|}_A = {\|z\|}_A = 1$ we have
\begin{align*}
{\langle T(y + z), y + z\rangle}_A = {\langle Ty, y\rangle}_A + 2\mbox{Re}{\langle Ty, z\rangle}_A + {\langle Tz, z\rangle}_A
\end{align*}
and
\begin{align*}
{\langle T(y - z), y - z\rangle}_A = {\langle Ty, y\rangle}_A - 2\mbox{Re}{\langle Ty, z\rangle}_A + {\langle Tz, z\rangle}_A.
\end{align*}
Consequently, we deduce
\begin{align*}
\mbox{Re}{\langle Ty, z\rangle}_A  = \frac{1}{4}\Big({\langle T(y + z), y + z\rangle}_A - {\langle T(y - z), y - z\rangle}_A\Big).
\end{align*}
So, we obtain
\begin{align*}
\big|\mbox{Re}{\langle Ty, z\rangle}_A\big| \leq \frac{w_A(T)}{4}\big({\|y + z\|}^2_A + {\|y - z\|}^2_A\big).
\end{align*}
Then it follows from parallelogram law that
\begin{align}\label{I.1.l.2.1}
\big|\mbox{Re}{\langle Ty, z\rangle}_A \big| \leq \frac{w_A(T)}{4}\big(2{\|y\|}^2_A + 2{\|z\|}^2_A\big) = w_A(T).
\end{align}
Now, consider the polar decomposition ${\langle Ty, z\rangle}_A = e^{i\theta}\big|{\langle Ty, z\rangle}_A\big|$
with $\theta \in \mathbb{R}$. By replacing $z$ by $e^{i\theta}z$ in (\ref{I.1.l.2.1}),
we get $\big|{\langle Ty, z\rangle}_A \big| = \mbox{Re}{\langle Ty, e^{i\theta}z\rangle}_A \leq w_A(T)$.
From this it follows that
${\|T\|}_A = \sup\Big\{\big|{\langle Ty, z\rangle}_A\big|: \,\,y, z\in \mathcal{H}, \,{\|y\|}_A = {\|z\|}_A = 1\Big\}\leq w_A(T)$
and consequently $w_A(T) = {\|T\|}_A$.
\end{proof}
%%%%%%%%%%%%%%%%%%%%%%%%%%%%%%%%%%%
\begin{remark}
Note that for an arbitrary operator $T$ of $\mathbb{B}_{A}(\mathcal{H})$, we have
\begin{align*}
0\leq {\|T\|}_A^2 - w_A^2(T) \leq
\inf_{\gamma \in \mathbb{C}}\Big\{{\|T + \gamma I\|}_A^2 - c_A^2(T + \gamma I)\Big\}.
\end{align*}
Indeed, if $x\in \mathcal{H}$ with ${\|x\|}_A= 1$, then simple computations show that
\begin{align*}
{\|Tx\|}^2_A - \big|{\langle Tx, x\rangle}_A\big|^2 =
{\|Tx - \gamma x\|}^2_A - \big|{\langle Tx - \gamma x, x\rangle}_A\big|^2 \qquad (\gamma \in \mathbb{C}),
\end{align*}
whence
\begin{align*}
{\|Tx\|}^2_A - \big|{\langle Tx, x\rangle}_A\big|^2 \leq
{\|T - \gamma I\|}^2_A - c^2_A(T - \gamma I) \qquad (\gamma \in \mathbb{C}).
\end{align*}
Thus
\begin{align*}
{\|Tx\|}^2_A - \big|{\langle Tx, x\rangle}_A\big|^2 \leq
\inf_{\gamma \in \mathbb{C}}\Big\{{\|T - \gamma I\|}^2_A - c^2_A(T - \gamma I)\Big\}.
\end{align*}
Taking the supremum in the above inequality over $x\in \mathcal{H}$, ${\|x\|}_A= 1$, we deduce the desired
inequality.
\end{remark}
%%%%%%%%%%%%%%%%%%%%%%%%%%%%%%%%%%%%
The second lemma is stated as follows.
%%%%%%%%%%%%%%%%%%%%%%%%%%%%%%%%%%%
\begin{lemma}\label{l.2.1.5}
Let $T\in\mathbb{B}_{A}(\mathcal{H})$. For every $\theta \in \mathbb{R}$,
\begin{align*}
w_A\left(\frac{e^{i\theta}T + (e^{i\theta}T)^{\sharp_A}}{2}\right)
= {\left\|\frac{e^{i\theta}T + (e^{i\theta}T)^{\sharp_A}}{2}\right\|}_A.
\end{align*}
\end{lemma}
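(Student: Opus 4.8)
The plan is to deduce this from Lemma~\ref{l.2.1} by checking that the operator
\[
S := \frac{e^{i\theta}T + (e^{i\theta}T)^{\sharp_A}}{2}
\]
is $A$-selfadjoint. Once that is established, Lemma~\ref{l.2.1} applies to $S$ and gives $w_A(S) = {\|S\|}_A$, which is exactly the claimed identity.

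First I would check that $S$ genuinely lies in $\mathbb{B}_{A}(\mathcal{H})$, so that Lemma~\ref{l.2.1} is applicable. Writing $R := e^{i\theta}T$, we have $R \in \mathbb{B}_{A}(\mathcal{H})$ because $\mathbb{B}_{A}(\mathcal{H})$ is a subalgebra (hence a linear subspace) of $\mathbb{B}(\mathcal{H})$; then $R^{\sharp_A} \in \mathbb{B}_{A}(\mathcal{H})$ as recalled in the preliminaries, and therefore $S = \tfrac12\big(R + R^{\sharp_A}\big) \in \mathbb{B}_{A}(\mathcal{H})$.

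The key step is the verification that $AS = S^{*}A$. Here I would start from the defining identity of the $A$-adjoint, $AR^{\sharp_A} = R^{*}A$, and use that $A$ is positive, so $A^{*} = A$; taking Hilbert-space adjoints in $AR^{\sharp_A} = R^{*}A$ then yields $(R^{\sharp_A})^{*}A = AR$. Combining the two relations,
\[
2\,S^{*}A = R^{*}A + (R^{\sharp_A})^{*}A = AR^{\sharp_A} + AR = 2\,AS,
\]
so $S$ is $A$-selfadjoint, and Lemma~\ref{l.2.1} finishes the argument.

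I do not expect a genuine obstacle: the whole content is the short computation that $S$ is $A$-selfadjoint, and the only point requiring a little care is keeping the Hilbert-space adjoint and the $A$-adjoint straight (which is where $A^{*}=A$ enters). As an alternative route one could bypass Lemma~\ref{l.2.1} and rerun its parallelogram-law argument directly for $S$, but reducing to the lemma is cleaner and shorter.
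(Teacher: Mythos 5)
Your proof is correct, and it reaches Lemma~\ref{l.2.1} by a slightly different route than the paper. You verify directly that $S=\tfrac12\big(e^{i\theta}T+(e^{i\theta}T)^{\sharp_A}\big)$ is $A$-selfadjoint by taking Hilbert-space adjoints in $AR^{\sharp_A}=R^{*}A$ (with $R=e^{i\theta}T$) to get $(R^{\sharp_A})^{*}A=AR$, and this computation is sound. The paper instead avoids the Hilbert-space adjoint altogether: it applies Lemma~\ref{l.2.1} to $S^{\sharp_A}=\tfrac12\big(R^{\sharp_A}+(R^{\sharp_A})^{\sharp_A}\big)$, which is seen to be $A$-selfadjoint purely from the algebraic identity $\big((X^{\sharp_A})^{\sharp_A}\big)^{\sharp_A}=X^{\sharp_A}$ (so that $S^{\sharp_A}$ is a fixed point of $\sharp_A$), and then transfers the conclusion back to $S$ using $w_A(X^{\sharp_A})=w_A(X)$ and ${\|X^{\sharp_A}\|}_A={\|X\|}_A$. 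The paper's detour exists because $S=S^{\sharp_A}$ can fail in general (only $AS=S^{*}A$ is guaranteed, as the example in the preliminaries shows), so one cannot argue via the fixed-point criterion for $S$ itself; your argument sidesteps this by checking the defining condition $AS=S^{*}A$ directly, which is all Lemma~\ref{l.2.1} needs. Net effect: your version is a bit shorter and more self-contained, while the paper's stays entirely inside the $\sharp_A$-calculus listed in the preliminaries.
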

\begin{proof}
Let $\theta \in \mathbb{R}$. We have
$\Big(\frac{(e^{i\theta}T)^{\sharp_A} + \big((e^{i\theta}T)^{\sharp_A}\big)^{\sharp_A}}{2}\Big)^{\sharp_A}
= \frac{\big((e^{i\theta}T)^{\sharp_A}\big)^{\sharp_A} + (e^{i\theta}T)^{\sharp_A}}{2}$. Hence
$\frac{(e^{i\theta}T)^{\sharp_A} + \big((e^{i\theta}T)^{\sharp_A}\big)^{\sharp_A}}{2}$ is an $A$-selfadjoint operator.
So, by Lemma \ref{l.2.1} we get
\begin{align}\label{I.1.l.2.1.5}
w_A\left(\frac{(e^{i\theta}T)^{\sharp_A} + \big((e^{i\theta}T)^{\sharp_A}\big)^{\sharp_A}}{2}\right)
= {\left\|\frac{(e^{i\theta}T)^{\sharp_A} + \big((e^{i\theta}T)^{\sharp_A}\big)^{\sharp_A}}{2}\right\|}_A.
\end{align}
Since $w_A(R^{\sharp_A}) = w_A(R)$ and ${\|R^{\sharp_A}\|}_A = {\|R\|}_A$ for every $R\in\mathbb{B}_{A}(\mathcal{H})$,
from (\ref{I.1.l.2.1.5}) it follows that
\begin{align*}
w_A\left(\frac{e^{i\theta}T + (e^{i\theta}T)^{\sharp_A}}{2}\right)
= {\left\|\frac{e^{i\theta}T + (e^{i\theta}T)^{\sharp_A}}{2}\right\|}_A.
\end{align*}
\end{proof}
%%%%%%%%%%%%%%%%%%%%%
We now state the third lemma, which will be used to prove Theorem \ref{T.2.30}.
%%%%%%%%%%%%%%%%%%%%%
\begin{lemma}\label{l.2.2}
Let $T\in\mathbb{B}_{A}(\mathcal{H})$ and $x\in \mathcal{H}$. Then
\begin{align*}
\displaystyle{\sup_{\theta \in \mathbb{R}}}
\left|{\Big\langle \frac{e^{i\theta}T + (e^{i\theta}T)^{\sharp_A}}{2}x, x \Big\rangle}_A\right|
= |{\langle Tx, x\rangle}_A|.
\end{align*}
\end{lemma}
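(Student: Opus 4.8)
The statement is purely a fact about a single fixed vector $x$, so the plan is to compute the inner product explicitly as a function of $\theta$ and maximize. First I would write $z := {\langle Tx, x\rangle}_A$ and observe that, since $AT^{\sharp_A} = T^*A$, we have ${\langle T^{\sharp_A}x, x\rangle}_A = \langle AT^{\sharp_A}x, x\rangle = \langle T^*Ax, x\rangle = \langle Ax, Tx\rangle = \overline{\langle ATx, x\rangle} = \overline{{\langle Tx, x\rangle}_A} = \bar z$. Consequently ${\langle (e^{i\theta}T)^{\sharp_A}x, x\rangle}_A = {\langle e^{-i\theta}T^{\sharp_A}x, x\rangle}_A = e^{-i\theta}\bar z$ (here I use $(e^{i\theta}T)^{\sharp_A} = \overline{e^{i\theta}}\,T^{\sharp_A} = e^{-i\theta}T^{\sharp_A}$, which follows from the defining equation $AR = (e^{i\theta}T)^*A = e^{-i\theta}T^*A$ and uniqueness of the reduced solution). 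Therefore
\begin{align*}
{\Big\langle \frac{e^{i\theta}T + (e^{i\theta}T)^{\sharp_A}}{2}x, x \Big\rangle}_A
= \frac{e^{i\theta}z + e^{-i\theta}\bar z}{2} = \mathrm{Re}\big(e^{i\theta}z\big).
\end{align*}

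The remaining step is the elementary optimization $\sup_{\theta \in \mathbb{R}} |\mathrm{Re}(e^{i\theta}z)| = |z|$. Writing $z = |z|e^{i\varphi}$ in polar form, $\mathrm{Re}(e^{i\theta}z) = |z|\cos(\theta + \varphi)$, which has supremum $|z|$ over $\theta \in \mathbb{R}$ (attained at $\theta = -\varphi$), and of course $|\mathrm{Re}(e^{i\theta}z)| \leq |z|$ always. Substituting back $z = {\langle Tx, x\rangle}_A$ gives exactly
\begin{align*}
\sup_{\theta \in \mathbb{R}} \left|{\Big\langle \frac{e^{i\theta}T + (e^{i\theta}T)^{\sharp_A}}{2}x, x \Big\rangle}_A\right| = |{\langle Tx, x\rangle}_A|,
\end{align*}
as claimed.

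The only genuinely delicate point — the "main obstacle," though it is minor — is justifying the two identities ${\langle T^{\sharp_A}x, x\rangle}_A = \overline{{\langle Tx, x\rangle}_A}$ and $(e^{i\theta}T)^{\sharp_A} = e^{-i\theta}T^{\sharp_A}$. Both rest on the relation $AT^{\sharp_A} = T^*A$ recorded in the preliminaries together with the fact that scaling $T$ by a unimodular constant scales the reduced solution of $AX = T^*A$ by the conjugate of that constant; once these are in hand the rest is a one-line computation plus a trigonometric supremum. I would present the argument in that order: (i) the conjugate-symmetry identity for ${\langle \cdot, \cdot\rangle}_A$ applied to $T^{\sharp_A}$, (ii) the scaling of $\sharp_A$, (iii) the resulting formula $\mathrm{Re}(e^{i\theta}{\langle Tx,x\rangle}_A)$, and (iv) the optimization over $\theta$.
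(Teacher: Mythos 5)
Your proposal is correct and follows essentially the same route as the paper: both reduce the inner product to $\mathrm{Re}\big(e^{i\theta}{\langle Tx,x\rangle}_A\big)$ using ${\langle T^{\sharp_A}x, x\rangle}_A = \overline{{\langle Tx, x\rangle}_A}$ (via $AT^{\sharp_A}=T^*A$) and $(e^{i\theta}T)^{\sharp_A}=e^{-i\theta}T^{\sharp_A}$, then optimize over $\theta$ by a suitable choice of phase. The only cosmetic difference is that you handle the supremum uniformly via the polar form of ${\langle Tx,x\rangle}_A$, whereas the paper treats the case ${\langle Tx,x\rangle}_A=0$ separately.
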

\begin{proof}
Let $\theta \in \mathbb{R}$. We have
\begin{align*}
\left|{\Big\langle \frac{e^{i\theta}T + (e^{i\theta}T)^{\sharp_A}}{2}x, x \Big\rangle}_A\right|
&= \frac{1}{2}\Big|e^{i\theta}{\langle Tx, x\rangle}_A + e^{-i\theta}{\langle T^{\sharp_A} x, x\rangle}_A\Big|
\\& = \frac{1}{2}\Big|e^{i\theta}{\langle Tx, x\rangle}_A + e^{-i\theta}{\langle x, Tx\rangle}_A\Big|.
\end{align*}
Thus
\begin{align}\label{I.1.l.2.2}
\left|{\Big\langle \frac{e^{i\theta}T + (e^{i\theta}T)^{\sharp_A}}{2}x, x \Big\rangle}_A\right|
= \Big|\mbox{Re}\big(e^{i\theta}{\langle Tx, x\rangle}_A\big)\Big|.
\end{align}
From this it follows that
\begin{align*}
\left|{\Big\langle \frac{e^{i\theta}T + (e^{i\theta}T)^{\sharp_A}}{2}x, x \Big\rangle}_A\right|
\leq \big|{\langle Tx, x\rangle}_A\big|,
\end{align*}
whence
\begin{align}\label{I.2.l.2.2}
\displaystyle{\sup_{\theta \in \mathbb{R}}}
\left|{\Big\langle \frac{e^{i\theta}T + (e^{i\theta}T)^{\sharp_A}}{2}x, x \Big\rangle}_A\right|
\leq |{\langle Tx, x\rangle}_A|.
\end{align}
Now, if $|{\langle Tx, x\rangle}_A| = 0$, then from (\ref{I.1.l.2.2}) we obtain
$\left|{\Big\langle \frac{e^{i\theta}T + (e^{i\theta}T)^{\sharp_A}}{2}x, x \Big\rangle}_A\right| = 0$
and so
\begin{align*}
\displaystyle{\sup_{\theta \in \mathbb{R}}}
\left|{\Big\langle \frac{e^{i\theta}T + (e^{i\theta}T)^{\sharp_A}}{2}x, x \Big\rangle}_A\right| = 0 = |{\langle Tx, x\rangle}_A|.
\end{align*}
If $|{\langle Tx, x\rangle}_A| \neq 0$, then we put $e^{i\theta_0} = \frac{{\langle x, Tx\rangle}_A}{|{\langle Tx, x\rangle}_A|}$.
Therefore, by (\ref{I.1.l.2.2}), we obtain
\begin{align}\label{I.3.l.2.2}
\left|{\Big\langle \frac{e^{i\theta_0}T + (e^{i\theta_0}T)^{\sharp_A}}{2}x, x \Big\rangle}_A\right|
= \Big|\mbox{Re}\big(e^{i\theta_0}{\langle Tx, x\rangle}_A\big)\Big| = |{\langle Tx, x\rangle}_A|.
\end{align}
From (\ref{I.2.l.2.2}) and (\ref{I.3.l.2.2}) it follows that
\begin{align*}
\displaystyle{\sup_{\theta \in \mathbb{R}}}
\left|{\Big\langle \frac{e^{i\theta}T + (e^{i\theta}T)^{\sharp_A}}{2}x, x \Big\rangle}_A\right|
= |{\langle Tx, x\rangle}_A|.
\end{align*}
\end{proof}
%%%%%%%%%%%%%%%%%%%%%%%%%%%%%%%%%%
Now, we are in a position to state a useful characterization of the $A$-numerical
radius for semi-Hilbertian space operators.
\begin{theorem}\label{T.2.30}
Let $T\in\mathbb{B}_{A}(\mathcal{H})$. Then
\begin{align*}
w_A(T) = \displaystyle{\sup_{\theta \in \mathbb{R}}}{\left\|\frac{e^{i\theta}T + (e^{i\theta}T)^{\sharp_A}}{2}\right\|}_A.
\end{align*}
\end{theorem}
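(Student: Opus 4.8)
The plan is to combine Lemma~\ref{l.2.1.5} and Lemma~\ref{l.2.2}; between them these two lemmas already carry the analytic content, so what remains is only to interchange two suprema.

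First I would fix $\theta \in \mathbb{R}$ and apply Lemma~\ref{l.2.1.5} together with the defining formula $w_A(S) = \sup\{|{\langle Sx, x\rangle}_A| : x\in\mathcal{H},\, {\|x\|}_A = 1\}$ to record, for each such $\theta$,
\begin{align*}
{\left\|\frac{e^{i\theta}T + (e^{i\theta}T)^{\sharp_A}}{2}\right\|}_A
= w_A\!\left(\frac{e^{i\theta}T + (e^{i\theta}T)^{\sharp_A}}{2}\right)
= \sup_{{\|x\|}_A = 1}\left|{\Big\langle \frac{e^{i\theta}T + (e^{i\theta}T)^{\sharp_A}}{2}x, x\Big\rangle}_A\right|.
\end{align*}
Taking the supremum over $\theta\in\mathbb{R}$ and interchanging it with the supremum over $\{x\in\mathcal{H} : {\|x\|}_A = 1\}$ — legitimate because this is just a double supremum of a single nonnegative quantity — gives
\begin{align*}
\sup_{\theta\in\mathbb{R}}{\left\|\frac{e^{i\theta}T + (e^{i\theta}T)^{\sharp_A}}{2}\right\|}_A
= \sup_{{\|x\|}_A = 1}\ \sup_{\theta\in\mathbb{R}}\left|{\Big\langle \frac{e^{i\theta}T + (e^{i\theta}T)^{\sharp_A}}{2}x, x\Big\rangle}_A\right|.
\end{align*}

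Next I would invoke Lemma~\ref{l.2.2}, which evaluates the inner supremum: for each fixed $x\in\mathcal{H}$ it equals $|{\langle Tx, x\rangle}_A|$. Substituting this and using the definition of $w_A(T)$ once more closes the argument:
\begin{align*}
\sup_{\theta\in\mathbb{R}}{\left\|\frac{e^{i\theta}T + (e^{i\theta}T)^{\sharp_A}}{2}\right\|}_A
= \sup_{{\|x\|}_A = 1}|{\langle Tx, x\rangle}_A| = w_A(T).
\end{align*}

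Since the substantive estimates live inside the two lemmas, I do not expect a real obstacle here; the only point requiring care is that Lemma~\ref{l.2.1.5} is stated as an equality of $w_A$ with the $A$-seminorm, so one must first unfold the definition of $w_A$ into a supremum over $A$-unit vectors before the two suprema can be swapped. I would also remark in passing that writing $e^{i\theta} = \alpha + i\beta$ with $\alpha^2 + \beta^2 = 1$ and expanding $\frac{e^{i\theta}T + (e^{i\theta}T)^{\sharp_A}}{2} = \alpha\frac{T + T^{\sharp_A}}{2} + \beta\frac{T - T^{\sharp_A}}{2i}$ recasts the identity in the $\alpha,\beta$ form announced in the abstract.
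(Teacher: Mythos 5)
Your proposal is correct and follows exactly the paper's own argument: apply Lemma \ref{l.2.1.5} to convert the $A$-seminorm into an $A$-numerical radius, unfold that as a supremum over $A$-unit vectors, interchange the two suprema, and evaluate the inner one via Lemma \ref{l.2.2}. The paper performs the same chain of equalities (leaving the interchange of suprema implicit), so there is nothing to add.
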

\begin{proof}
Let $\theta \in \mathbb{R}$. By Lemma \ref{l.2.1.5} it follows that
\begin{align*}
w_A\left(\frac{e^{i\theta}T + (e^{i\theta}T)^{\sharp_A}}{2}\right)
= {\left\|\frac{e^{i\theta}T + (e^{i\theta}T)^{\sharp_A}}{2}\right\|}_A.
\end{align*}
Therefore, by Lemma \ref{l.2.2} we conclude that
\begin{align*}
\displaystyle{\sup_{\theta \in \mathbb{R}}}{\left\|\frac{e^{i\theta}T + (e^{i\theta}T)^{\sharp_A}}{2}\right\|}_A
&= \displaystyle{\sup_{\theta \in \mathbb{R}}}\,w_A\left(\frac{e^{i\theta}T + (e^{i\theta}T)^{\sharp_A}}{2}\right)
\\&= \displaystyle{\sup_{\theta \in \mathbb{R}}}\,\displaystyle{\sup_{{\|x\|}_A = 1}}\left|{\Big\langle \frac{e^{i\theta}T
+ (e^{i\theta}T)^{\sharp_A}}{2}x, x \Big\rangle}_A\right|
\\&= \displaystyle{\sup_{{\|x\|}_A = 1}}\big|{\langle Tx, x\rangle}_A\big| = w_A(T).
\end{align*}
\end{proof}
%%%%%%%%%%%%%%%%%%%%%%%%%%%%%%%%%%%%%%%%%%%%%%%%%%%%%%%%%
Here we present one of the main results of this section.
%%%%%%%%%%%%%%%%%%%%%%%%%%%%%%%%%%%%%%%%%%%%%%%%%%%%%%%%%%
\begin{theorem}\label{T.2.4}
Let $T\in\mathbb{B}_{A}(\mathcal{H})$.
Then for $\alpha, \beta \in \mathbb{R}$,
\begin{align*}
w_A(T) = \displaystyle{\sup_{\alpha^2 + \beta^2 = 1}}
{\left\|\alpha \frac{T + T^{\sharp_A}}{2} +
\beta \frac{T - T^{\sharp_A}}{2i}\right\|}_A.
\end{align*}
\end{theorem}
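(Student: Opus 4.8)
The plan is to reduce Theorem \ref{T.2.4} to Theorem \ref{T.2.30} by recognizing that the two-parameter supremum over the unit circle $\alpha^2+\beta^2=1$ is just a reparametrization of the one-parameter supremum over $\theta\in\mathbb{R}$ appearing in Theorem \ref{T.2.30}. Writing $\alpha=\cos\theta$ and $\beta=\sin\theta$, the key algebraic identity to verify is
\begin{align*}
\cos\theta\,\frac{T + T^{\sharp_A}}{2} + \sin\theta\,\frac{T - T^{\sharp_A}}{2i}
= \frac{e^{i\theta}T + (e^{i\theta}T)^{\sharp_A}}{2}.
\end{align*}
This follows from expanding $e^{i\theta}=\cos\theta+i\sin\theta$, using $(e^{i\theta}T)^{\sharp_A}=e^{-i\theta}T^{\sharp_A}$ (since the Moore--Penrose-based map $R\mapsto R^{\sharp_A}$ is conjugate-linear), and collecting terms: the coefficient of $\frac{T+T^{\sharp_A}}{2}$ becomes $\tfrac12(e^{i\theta}+e^{-i\theta})=\cos\theta$ and the coefficient of $\frac{T-T^{\sharp_A}}{2i}$ becomes $\tfrac{1}{2i}\cdot i(e^{i\theta}-e^{-i\theta})\cdot\frac{1}{i}$... more carefully, $\tfrac12 e^{i\theta}T+\tfrac12 e^{-i\theta}T^{\sharp_A} = \tfrac12\cos\theta(T+T^{\sharp_A}) + \tfrac{i}{2}\sin\theta(T-T^{\sharp_A})$, and since $\tfrac{i}{2}\sin\theta(T-T^{\sharp_A}) = \sin\theta\cdot\tfrac{T-T^{\sharp_A}}{2/i} = \sin\theta\cdot\tfrac{T-T^{\sharp_A}}{-2i}\cdot(-1)$, one checks the signs line up so that it equals $\sin\theta\,\frac{T-T^{\sharp_A}}{2i}$.

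Given this identity, I would argue as follows. First, take any $\alpha,\beta\in\mathbb{R}$ with $\alpha^2+\beta^2=1$; then there is a $\theta\in\mathbb{R}$ with $\alpha=\cos\theta$, $\beta=\sin\theta$, and by the identity the operator $\alpha\frac{T+T^{\sharp_A}}{2}+\beta\frac{T-T^{\sharp_A}}{2i}$ equals $\frac{e^{i\theta}T+(e^{i\theta}T)^{\sharp_A}}{2}$, whose $A$-norm is at most $\sup_{\theta}\|\frac{e^{i\theta}T+(e^{i\theta}T)^{\sharp_A}}{2}\|_A = w_A(T)$ by Theorem \ref{T.2.30}. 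Taking the supremum over all such $\alpha,\beta$ gives $\sup_{\alpha^2+\beta^2=1}\|\cdots\|_A \le w_A(T)$. Conversely, for any $\theta\in\mathbb{R}$, setting $\alpha=\cos\theta$ and $\beta=\sin\theta$ shows $\|\frac{e^{i\theta}T+(e^{i\theta}T)^{\sharp_A}}{2}\|_A$ is one of the quantities in the supremum on the right, hence $\sup_{\theta}\|\frac{e^{i\theta}T+(e^{i\theta}T)^{\sharp_A}}{2}\|_A \le \sup_{\alpha^2+\beta^2=1}\|\cdots\|_A$, and the left side equals $w_A(T)$ again by Theorem \ref{T.2.30}. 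Combining the two inequalities yields the claimed equality.

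There is really no hard analytic obstacle here; the proof is essentially a bijective change of variables between $\{(\cos\theta,\sin\theta):\theta\in\mathbb{R}\}$ and the unit circle, combined with the conjugate-linearity of the $\sharp_A$-operation. The one point requiring a moment of care is the bookkeeping of the factors $\tfrac{1}{2i}$ and the signs in the algebraic identity above — in particular correctly using $(e^{i\theta}T)^{\sharp_A}=e^{-i\theta}T^{\sharp_A}$ rather than $e^{i\theta}T^{\sharp_A}$, which is what makes the $\cos\theta$ and $\sin\theta$ coefficients come out cleanly. Once that identity is in hand, invoking Theorem \ref{T.2.30} twice (once for each inequality) finishes the argument immediately.
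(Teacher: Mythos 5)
Your proof is correct and takes essentially the same route as the paper: parametrize the unit circle by $\theta$ and invoke Theorem \ref{T.2.30} for both inequalities. One small slip in your bookkeeping: since $\tfrac{i}{2}\sin\theta\,(T-T^{\sharp_A})=-\sin\theta\,\tfrac{T-T^{\sharp_A}}{2i}$, the correct identity is
\begin{align*}
\frac{e^{i\theta}T + (e^{i\theta}T)^{\sharp_A}}{2}
= \cos\theta\,\frac{T + T^{\sharp_A}}{2} - \sin\theta\,\frac{T - T^{\sharp_A}}{2i},
\end{align*}
i.e.\ $\beta=-\sin\theta$ as in the paper, not $+\sin\theta$; this is immaterial to the conclusion because $(\cos\theta,-\sin\theta)$ sweeps out the same unit circle.
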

\begin{proof}
Let $\theta \in \mathbb{R}$. Put $\alpha = \cos \theta$ and $\beta = -\sin \theta$.
We have
\begin{align*}
\frac{e^{i\theta}T + (e^{i\theta}T)^{\sharp_A}}{2}
&= \frac{(\cos \theta + i\sin \theta)T + (\cos \theta - i\sin \theta)T^{\sharp_A}}{2}
\\& = \cos \theta \frac{T + T^{\sharp_A}}{2} - \sin \theta \frac{T - T^{\sharp_A}}{2i}
\\& = \alpha \frac{T + T^{\sharp_A}}{2} + \beta \frac{T - T^{\sharp_A}}{2i}.
\end{align*}
Therefore
\begin{align*}
\displaystyle{\sup_{\theta \in \mathbb{R}}}
{\left\|\frac{e^{i\theta}T + (e^{i\theta}T)^{\sharp_A}}{2}\right\|}_A
= \displaystyle{\sup_{\alpha^2 + \beta^2 = 1}}
{\left\|\alpha \frac{T + T^{\sharp_A}}{2} +
\beta \frac{T - T^{\sharp_A}}{2i}\right\|}_A,
\end{align*}
and hence, by Theorem \ref{T.2.30}, we obtain
\begin{align*}
w_A(T) = \displaystyle{\sup_{\alpha^2 + \beta^2 = 1}}
{\left\|\alpha \frac{T + T^{\sharp_A}}{2} +
\beta \frac{T - T^{\sharp_A}}{2i}\right\|}_A.
\end{align*}
\end{proof}
%%%%%%%%%%%%%%%%%%%%%%%%%%%%%%
As a consequence of Theorem \ref{T.2.4}, we have the following result.
%%%%%%%%%%%%%%%%%%%%%%%%%%%%%%
\begin{corollary}\label{C.2.5}
Let $T\in\mathbb{B}_{A}(\mathcal{H})$. Then
\begin{align*}
\max\left\{{\left\|\frac{T + T^{\sharp_A}}{2}\right\|}_A
, {\left\|\frac{T - T^{\sharp_A}}{2i}\right\|}_A\right\}\leq w_A(T).
\end{align*}
\end{corollary}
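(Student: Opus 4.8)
The plan is to derive Corollary \ref{C.2.5} directly from Theorem \ref{T.2.4} by specializing the parameters $\alpha,\beta$ in the supremum. Since $w_A(T)$ equals the supremum over \emph{all} pairs $(\alpha,\beta)$ on the unit circle of the quantity ${\left\|\alpha \frac{T+T^{\sharp_A}}{2} + \beta \frac{T-T^{\sharp_A}}{2i}\right\|}_A$, any single admissible choice of $(\alpha,\beta)$ yields a lower bound for $w_A(T)$.

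First I would take $(\alpha,\beta) = (1,0)$, which satisfies $\alpha^2+\beta^2=1$. Plugging into the expression collapses it to ${\left\|\frac{T+T^{\sharp_A}}{2}\right\|}_A$, so this quantity is $\le w_A(T)$. Next I would take $(\alpha,\beta) = (0,1)$, again on the unit circle, which yields ${\left\|\frac{T-T^{\sharp_A}}{2i}\right\|}_A \le w_A(T)$. Combining the two bounds gives $\max\left\{{\left\|\frac{T+T^{\sharp_A}}{2}\right\|}_A, {\left\|\frac{T-T^{\sharp_A}}{2i}\right\|}_A\right\} \le w_A(T)$, which is exactly the claimed inequality.

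There is essentially no obstacle here: the corollary is an immediate consequence of the characterization in Theorem \ref{T.2.4}, obtained by restricting the supremum to two convenient points. The only thing worth a sentence of care is confirming that the endpoints $(\pm 1, 0)$ and $(0,\pm 1)$ are genuinely attained in the index set $\{\alpha^2+\beta^2=1\}$ (they are, since this set is the full unit circle in $\mathbb{R}^2$, not an open subset), and that both $\frac{T+T^{\sharp_A}}{2}$ and $\frac{T-T^{\sharp_A}}{2i}$ lie in $\mathbb{B}_A(\mathcal{H})$ so that their $A$-operator seminorms are finite — this follows because $\mathbb{B}_A(\mathcal{H})$ is a subalgebra closed under the $\sharp_A$ operation, as recalled in the preliminaries.

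\begin{proof}
By Theorem \ref{T.2.4}, for every pair $\alpha, \beta \in \mathbb{R}$ with $\alpha^2 + \beta^2 = 1$ we have
\begin{align*}
{\left\|\alpha \frac{T + T^{\sharp_A}}{2} + \beta \frac{T - T^{\sharp_A}}{2i}\right\|}_A \leq w_A(T).
\end{align*}
Taking $\alpha = 1$ and $\beta = 0$ yields
\begin{align*}
{\left\|\frac{T + T^{\sharp_A}}{2}\right\|}_A \leq w_A(T),
\end{align*}
while taking $\alpha = 0$ and $\beta = 1$ yields
\begin{align*}
{\left\|\frac{T - T^{\sharp_A}}{2i}\right\|}_A \leq w_A(T).
\end{align*}
Combining these two inequalities, we obtain
\begin{align*}
\max\left\{{\left\|\frac{T + T^{\sharp_A}}{2}\right\|}_A, {\left\|\frac{T - T^{\sharp_A}}{2i}\right\|}_A\right\} \leq w_A(T),
\end{align*}
as desired.
\end{proof}
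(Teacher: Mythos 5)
Your proof is correct and is exactly the paper's argument: the paper also obtains the corollary by setting $(\alpha,\beta)=(1,0)$ and $(\alpha,\beta)=(0,1)$ in Theorem \ref{T.2.4}. The extra remarks about the index set being the full unit circle and $\mathbb{B}_A(\mathcal{H})$ being closed under $\sharp_A$ are fine but not needed.
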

\begin{proof}
By setting $(\alpha, \beta) =(1, 0)$ and $(\alpha, \beta) =(0, 1)$ in Theorem \ref{T.2.4}, the result follows.
\end{proof}
%%%%%%%%%%%%%%%%%%%%%%%%%%%%
The following result is another consequence of Theorem \ref{T.2.4}.
%%%%%%%%%%%%%%%%%%%%%%%%%%%%%
\begin{corollary}\label{C.2.6}
Let $T\in\mathbb{B}_{A}(\mathcal{H})$. Then
\begin{align}\label{I.1.C.2.6}
\frac{1}{2}{\|T\|}_A\leq w_A(T) \leq {\|T\|}_A.
\end{align}
\end{corollary}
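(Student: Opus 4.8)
The plan is to derive both inequalities directly from Theorem \ref{T.2.4}, exactly as the lower bound from Corollary \ref{C.2.5} is already half the story. First I would establish the upper bound $w_A(T) \le \|T\|_A$: for any $\alpha, \beta \in \mathbb{R}$ with $\alpha^2 + \beta^2 = 1$, the triangle inequality for $\|\cdot\|_A$ gives
\begin{align*}
{\left\|\alpha \frac{T + T^{\sharp_A}}{2} + \beta \frac{T - T^{\sharp_A}}{2i}\right\|}_A
\le |\alpha|\,{\left\|\frac{T + T^{\sharp_A}}{2}\right\|}_A + |\beta|\,{\left\|\frac{T - T^{\sharp_A}}{2i}\right\|}_A,
\end{align*}
and since ${\|T^{\sharp_A}\|}_A = {\|T\|}_A$, each of the two norms on the right is at most ${\|T\|}_A$; using $|\alpha| + |\beta| \le \sqrt{2}\sqrt{\alpha^2+\beta^2}$ would only give $\sqrt 2\,\|T\|_A$, so instead I would use the sharper bound $|\alpha|+|\beta|$ is not what we want — rather, observe directly that $\alpha \frac{T+T^{\sharp_A}}{2} + \beta\frac{T-T^{\sharp_A}}{2i} = \frac{e^{i\theta}T + (e^{i\theta}T)^{\sharp_A}}{2}$ for the corresponding $\theta$ (this identity is the content of the proof of Theorem \ref{T.2.4}), and then ${\left\|\frac{e^{i\theta}T + (e^{i\theta}T)^{\sharp_A}}{2}\right\|}_A \le \frac{1}{2}\big({\|e^{i\theta}T\|}_A + {\|(e^{i\theta}T)^{\sharp_A}\|}_A\big) = {\|T\|}_A$. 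Taking the supremum over $\theta$ (equivalently over $\alpha^2+\beta^2=1$) and invoking Theorem \ref{T.2.4} yields $w_A(T) \le {\|T\|}_A$.

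For the lower bound $\frac{1}{2}{\|T\|}_A \le w_A(T)$, I would invoke Corollary \ref{C.2.5}, which is already proved, together with the identity $T = \frac{T+T^{\sharp_A}}{2} + \frac{T-T^{\sharp_A}}{2i}\cdot i$, i.e. $T = \frac{T+T^{\sharp_A}}{2} + i\cdot\frac{T-T^{\sharp_A}}{2i}$. Hence by the triangle inequality
\begin{align*}
{\|T\|}_A \le {\left\|\frac{T + T^{\sharp_A}}{2}\right\|}_A + {\left\|\frac{T - T^{\sharp_A}}{2i}\right\|}_A
\le 2\max\left\{{\left\|\frac{T + T^{\sharp_A}}{2}\right\|}_A, {\left\|\frac{T - T^{\sharp_A}}{2i}\right\|}_A\right\} \le 2w_A(T),
\end{align*}
where the last step is Corollary \ref{C.2.5}. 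Dividing by $2$ gives the claim.

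I do not anticipate a genuine obstacle here: the corollary is essentially a bookkeeping consequence of the characterization in Theorem \ref{T.2.4} and of Corollary \ref{C.2.5}. The only point requiring mild care is making sure the constant in the upper bound comes out as ${\|T\|}_A$ rather than $\sqrt 2\,{\|T\|}_A$, which is why I would route the estimate through the single operator $\frac{e^{i\theta}T + (e^{i\theta}T)^{\sharp_A}}{2}$ and use ${\|(e^{i\theta}T)^{\sharp_A}\|}_A = {\|e^{i\theta}T\|}_A = {\|T\|}_A$ rather than splitting into the $\alpha$ and $\beta$ pieces separately.
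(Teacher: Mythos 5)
Your proposal is correct and follows essentially the same route as the paper: the lower bound is obtained exactly as in the paper, by writing $T = \frac{T+T^{\sharp_A}}{2} + i\,\frac{T-T^{\sharp_A}}{2i}$, applying the triangle inequality, and invoking Corollary \ref{C.2.5}. The only cosmetic difference is in the upper bound, which the paper dismisses as immediate (from the Cauchy--Schwarz inequality in the definition of $w_A$) whereas you derive it from Theorem \ref{T.2.30} via ${\|(e^{i\theta}T)^{\sharp_A}\|}_A = {\|T\|}_A$; both justifications are valid.
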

\begin{proof}
Clearly, $w_A(T) \leq {\|T\|}_A$.
On the other hand, by using Corollary \ref{C.2.5}, we get
\begin{align*}
{\|T\|}_A = {\left\|\frac{T + T^{\sharp_A}}{2} + i\frac{T - T^{\sharp_A} }{2i}\right\|}_A
\leq {\left\|\frac{T + T^{\sharp_A}}{2}\right\|}_A + {\left\|\frac{T - T^{\sharp_A}}{2i}\right\|}_A
\leq 2w_A(T).
\end{align*}
Hence $\frac{1}{2}{\|T\|}_A\leq w_A(T)$.
\end{proof}
\begin{remark}\label{R.2.6.5}
Corollary \ref{C.2.6} has recently been proved by Baklouti et al. in \cite{Ba.Ka.Ah}.
Our approach here is different from theirs.
\end{remark}
%%%%%%%%%%%%%%%%%%%%%%%%%%%
In the following theorem, we give a improvement of the second inequality in (\ref{I.1.C.2.6}).
%%%%%%%%%%%%%%%%%%%%%%%%%%%
\begin{theorem}\label{T.2.7}
Let $T\in\mathbb{B}_{A}(\mathcal{H})$. Then
\begin{align*}
w_A(T) \leq \frac{\sqrt{2}}{2}\sqrt{{\big\|T T^{\sharp_A} + T^{\sharp_A} T\big\|}_A} \leq {\|T\|}_A.
\end{align*}
\end{theorem}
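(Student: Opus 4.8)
The plan is to adapt Kittaneh's Hilbert-space argument to the semi-Hilbertian setting, working directly from the definition of $w_A$ (no need for Theorem \ref{T.2.4}, although the rotation characterization of Theorem \ref{T.2.30} would give an alternative route). Fix $x\in\mathcal{H}$ with ${\|x\|}_A=1$. The first step is to record two Cauchy--Schwarz estimates for the semi-inner product: ${|{\langle Tx,x\rangle}_A|}\leq {\|Tx\|}_A$, and, using the defining relation ${\langle Tx,x\rangle}_A={\langle x,T^{\sharp_A}x\rangle}_A$ of the $A$-adjoint, ${|{\langle Tx,x\rangle}_A|}={|{\langle x,T^{\sharp_A}x\rangle}_A|}\leq{\|T^{\sharp_A}x\|}_A$. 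Squaring and adding gives $2{|{\langle Tx,x\rangle}_A|}^2\leq{\|Tx\|}^2_A+{\|T^{\sharp_A}x\|}^2_A$.

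The second step is to recognize the right-hand side as a single quadratic form. Using $AT^{\sharp_A}=T^*A$ one checks ${\|Tx\|}^2_A=\langle ATx,Tx\rangle=\langle T^*ATx,x\rangle=\langle AT^{\sharp_A}Tx,x\rangle={\langle T^{\sharp_A}Tx,x\rangle}_A$, and applying the $A$-adjoint relation to $Tx$ replaced by $T^{\sharp_A}x$ gives ${\|T^{\sharp_A}x\|}^2_A={\langle TT^{\sharp_A}x,x\rangle}_A$. Hence $2{|{\langle Tx,x\rangle}_A|}^2\leq{\langle (TT^{\sharp_A}+T^{\sharp_A}T)x,x\rangle}_A$.

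The third step uses $A$-positivity. Since $TT^{\sharp_A}$ and $T^{\sharp_A}T$ are $A$-positive (as recalled in the preliminaries), so is their sum, and therefore the formula for the $A$-norm of an $A$-positive operator yields ${\langle (TT^{\sharp_A}+T^{\sharp_A}T)x,x\rangle}_A\leq{\|TT^{\sharp_A}+T^{\sharp_A}T\|}_A$. Taking the supremum over unit vectors $x$ gives $2w^2_A(T)\leq{\|TT^{\sharp_A}+T^{\sharp_A}T\|}_A$, which is the first asserted inequality after rearranging. The second inequality is immediate from the triangle inequality together with ${\|TT^{\sharp_A}\|}_A={\|T^{\sharp_A}T\|}_A={\|T\|}^2_A$, since then ${\|TT^{\sharp_A}+T^{\sharp_A}T\|}_A\leq 2{\|T\|}^2_A$, so that $\frac{\sqrt{2}}{2}\sqrt{{\|TT^{\sharp_A}+T^{\sharp_A}T\|}_A}\leq{\|T\|}_A$.

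There is no serious obstacle in this argument; the only points needing a little care are (i) verifying the identities ${\langle T^{\sharp_A}Tx,x\rangle}_A={\|Tx\|}^2_A$ and ${\langle TT^{\sharp_A}x,x\rangle}_A={\|T^{\sharp_A}x\|}^2_A$ in the seminorm setting, which go through via $AT^{\sharp_A}=T^*A$ and the adjoint relation rather than via invertibility of $A$, and (ii) invoking the $A$-positivity of $TT^{\sharp_A}+T^{\sharp_A}T$ so that its $A$-norm is attained as a supremum of the associated quadratic form. An alternative proof could expand ${\|\tfrac{e^{i\theta}T+(e^{i\theta}T)^{\sharp_A}}{2}\|}^2_A$ via Theorem \ref{T.2.30}, but the direct route above is cleaner.
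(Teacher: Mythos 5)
Your proof is correct, and it reaches the same key quadratic-form bound as the paper but by a more direct route. The paper first passes to the Cartesian decomposition $T^{\sharp_A}=M+iN$ with $M=\tfrac{T^{\sharp_A}+(T^{\sharp_A})^{\sharp_A}}{2}$, $N=\tfrac{T^{\sharp_A}-(T^{\sharp_A})^{\sharp_A}}{2i}$, uses the identity $|{\langle T^{\sharp_A}x,x\rangle}_A|^2=|{\langle Mx,x\rangle}_A|^2+|{\langle Nx,x\rangle}_A|^2$, applies Cauchy--Schwarz to $M$ and $N$ separately, and then needs the auxiliary computation $M^2+N^2=\bigl(\tfrac{TT^{\sharp_A}+T^{\sharp_A}T}{2}\bigr)^{\sharp_A}$ together with ${\|R^{\sharp_A}\|}_A={\|R\|}_A$ to identify the resulting bound. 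You instead apply Cauchy--Schwarz twice to ${\langle Tx,x\rangle}_A$ itself --- once as written and once rewritten as ${\langle x,T^{\sharp_A}x\rangle}_A$ --- and average, landing directly on $2|{\langle Tx,x\rangle}_A|^2\leq{\|Tx\|}^2_A+{\|T^{\sharp_A}x\|}^2_A={\langle (TT^{\sharp_A}+T^{\sharp_A}T)x,x\rangle}_A$. This bypasses the Cartesian decomposition, the verification that $M^{\sharp_A}=M$ and $N^{\sharp_A}=N$, and the $\sharp_A$-bookkeeping for $M^2+N^2$; the only identities you need are ${\|Tx\|}^2_A={\langle T^{\sharp_A}Tx,x\rangle}_A$ and ${\|T^{\sharp_A}x\|}^2_A={\langle TT^{\sharp_A}x,x\rangle}_A$, which you correctly derive from $AT^{\sharp_A}=T^*A$ and its adjoint $(T^{\sharp_A})^*A=AT$. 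Your final appeal to the $A$-positivity of $TT^{\sharp_A}+T^{\sharp_A}T$ is legitimate (and could even be replaced by a plain Cauchy--Schwarz estimate ${\langle Rx,x\rangle}_A\leq{\|R\|}_A$), and the second inequality is handled exactly as in the paper. What the paper's longer route buys is the explicit Cartesian machinery that it reuses elsewhere; what yours buys is brevity and fewer points where the degeneracy of the seminorm could cause trouble.
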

\begin{proof}
Put $M : = \frac{T^{\sharp_A} + (T^{\sharp_A})^{\sharp_A}}{2}$ and
$N : = \frac{T^{\sharp_A} - (T^{\sharp_A})^{\sharp_A}}{2i}$.
Then $T^{\sharp_A} = M + iN$. Also, simple computations show that
\begin{align*}
M^2 + N^2
= \frac{(T^{\sharp_A})^{\sharp_A}T^{\sharp_A} + T^{\sharp_A}(T^{\sharp_A})^{\sharp_A}}{2}
= \left(\frac{TT^{\sharp_A} + T^{\sharp_A} T}{2}\right)^{\sharp_A}.
\end{align*}
Since ${\|R^{\sharp_A}\|}_A = {\|R\|}_A$ for every $R\in\mathbb{B}_{A}(\mathcal{H})$, hence
\begin{align}\label{I.0.T.2.7}
{\|M^2 + N^2\|}_A
= \frac{1}{2}{\Big\|TT^{\sharp_A} + T^{\sharp_A} T\Big\|}_A.
\end{align}
Now, let $x\in \mathcal{H}$ with ${\|x\|}_A = 1$. We have
\begin{align*}
\big|{\langle x, Tx \rangle}_A\big|^2 &= \big|{\langle T^{\sharp_A}x, x \rangle}_A\big|^2
\\&= {\big\langle (M + i N)x, x\big\rangle}_A
{\big\langle x, (M + i N)x\big\rangle}_A
\\& = \Big({\langle Mx, x\rangle}_A + i{\langle Nx, x\rangle}_A \Big)
\Big({\langle x, Mx\rangle}_A - i{\langle x, Nx\rangle}_A \Big)
\\& = \big|{\langle Mx, x\rangle}_A\big|^2 + \big|{\langle Nx, x\rangle}_A\big|^2
\\& \leq {\langle Mx, Mx\rangle}_A + {\langle Nx, Nx\rangle}_A \qquad\big(\mbox{by the Cauchy-Schwarz inequality}\big)
\\& = {\langle M^2x, x\rangle}_A + {\langle N^2x, x\rangle}_A \qquad \qquad\big(\mbox{since $M^{\sharp_A} = M$ and $N^{\sharp_A} = N$}\big)
\\& = {\big\langle (M^2 + N^2)x, x\big\rangle}_A
\\& \leq {\|M^2 + N^2\|}_A = \frac{1}{2}{\Big\|TT^{\sharp_A} + T^{\sharp_A} T\Big\|}_A. \qquad \qquad\big(\mbox{by (\ref{I.0.T.2.7})}\big)
\end{align*}
Hence
\begin{align*}
w^2_A(T) = \displaystyle{\sup_{{\|x\|}_A = 1}}
\big|{\langle x, Tx \rangle}_A\big|^2
\leq \frac{1}{2}{\Big\|TT^{\sharp_A} + T^{\sharp_A} T\Big\|}_A,
\end{align*}
or equivalently,
\begin{align}\label{I.1.T.2.7}
w_A(T) \leq \frac{\sqrt{2}}{2}\sqrt{{\big\|T T^{\sharp_A} + T^{\sharp_A} T\big\|}_A}.
\end{align}
Further, since ${\|T T^{\sharp_A}\|}_A = {\|T^{\sharp_A} T\|}_A = {\|T\|}^2_A$, by the triangle inequality we obtain
\begin{align*}
\frac{\sqrt{2}}{2}\sqrt{{\big\|T T^{\sharp_A} + T^{\sharp_A} T\big\|}_A}
\leq \frac{\sqrt{2}}{2}\sqrt{{\|T T^{\sharp_A}\|}_A + {\|T^{\sharp_A} T\|}_A}
= {\|T\|}_A.
\end{align*}
Thus
\begin{align}\label{I.2.T.2.7}
\frac{\sqrt{2}}{2}\sqrt{{\big\|T T^{\sharp_A} + T^{\sharp_A} T\big\|}_A} \leq {\|T\|}_A.
\end{align}
By (\ref{I.1.T.2.7}) and (\ref{I.2.T.2.7}) we deduce the desired result.
\end{proof}
%%%%%%%%%%%%%%%%%%%%%%%%%%%
Next, we present another improvement of the second inequality in (\ref{I.1.C.2.6}).
%%%%%%%%%%%%%%%%%%%%%%%%%%%%
\begin{theorem}\label{T.2.3}
Let $T\in\mathbb{B}_{A}(\mathcal{H})$. Then
\begin{align*}
w_A(T) \leq \frac{1}{2}\sqrt{{\big\|TT^{\sharp_A} + T^{\sharp_A} T\big\|}_A + 2w_A(T^2)} \leq {\|T\|}_A.
\end{align*}
\end{theorem}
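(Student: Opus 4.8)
The plan is to refine the argument behind Theorem~\ref{T.2.7}: instead of estimating the $A$-real part of a rotation of $T$ directly, I would square it and exploit the extra structure. Write $\mathrm{Re}_A(S):=\frac{S+S^{\sharp_A}}{2}$ for $S\in\mathbb{B}_A(\mathcal{H})$ and, for $\theta\in\mathbb{R}$, set $R_\theta:=\mathrm{Re}_A(e^{i\theta}T)=\frac{e^{i\theta}T+(e^{i\theta}T)^{\sharp_A}}{2}$. Since $\mathbb{B}_A(\mathcal{H})$ is a linear subspace containing $T$ and $T^{\sharp_A}$, we have $R_\theta\in\mathbb{B}_A(\mathcal{H})$; and from $(e^{i\theta}T)^{\sharp_A}=e^{-i\theta}T^{\sharp_A}$ and $AT^{\sharp_A}=T^*A$ one checks that $AR_\theta=\frac12\bigl(e^{i\theta}AT+e^{-i\theta}T^*A\bigr)$ is selfadjoint, so $R_\theta$ is $A$-selfadjoint. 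By Theorem~\ref{T.2.30}, $w_A(T)=\sup_{\theta\in\mathbb{R}}\|R_\theta\|_A$, so it suffices to bound $\|R_\theta\|_A^2$ uniformly in $\theta$.

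The one step I would isolate as a small separate fact is that $\|R^2\|_A=\|R\|_A^2$ for every $A$-selfadjoint $R\in\mathbb{B}_A(\mathcal{H})$. Since this is not simply the $C^*$-identity in this setting, I would argue it as follows: $AR=R^*A$ gives $AR^2=R^*AR=(AR^{\sharp_A})R=AR^{\sharp_A}R$, hence $A(R^{\sharp_A}R-R^2)=0$, so $\|R^{\sharp_A}R-R^2\|_A=0$, and therefore $\|R^2\|_A=\|R^{\sharp_A}R\|_A=\|R\|_A^2$ by the identity $\|R^{\sharp_A}R\|_A=\|R\|_A^2$ recorded in the preliminaries. (Alternatively, $R^2$ is $A$-positive, because $\langle R^2x,x\rangle_A=\langle ARx,Rx\rangle=\|Rx\|_A^2\ge0$, and the formula for the $A$-norm of an $A$-positive operator gives the same conclusion.) Applying this with $R=R_\theta$, expanding the square, and using $(e^{i\theta}T)^{\sharp_A}=e^{-i\theta}T^{\sharp_A}$ together with $(T^2)^{\sharp_A}=(T^{\sharp_A})^2$, I obtain
\begin{align*}
R_\theta^2&=\frac14\bigl(e^{2i\theta}T^2+e^{-2i\theta}(T^{\sharp_A})^2\bigr)+\frac14\bigl(TT^{\sharp_A}+T^{\sharp_A}T\bigr)\\
&=\frac12\,\mathrm{Re}_A(e^{2i\theta}T^2)+\frac14\bigl(TT^{\sharp_A}+T^{\sharp_A}T\bigr).
\end{align*}

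To close the argument I would bound the first summand using Theorem~\ref{T.2.30} applied to $T^2\in\mathbb{B}_A(\mathcal{H})$: since $\mathrm{Re}_A(e^{2i\theta}T^2)$ is one of the operators $\mathrm{Re}_A(e^{i\phi}T^2)$ over which that supremum is taken, $\|\mathrm{Re}_A(e^{2i\theta}T^2)\|_A\le\sup_{\phi\in\mathbb{R}}\|\mathrm{Re}_A(e^{i\phi}T^2)\|_A=w_A(T^2)$. Combining this with the triangle inequality and $\|R_\theta\|_A^2=\|R_\theta^2\|_A$ gives $\|R_\theta\|_A^2\le\frac12 w_A(T^2)+\frac14\|TT^{\sharp_A}+T^{\sharp_A}T\|_A$ for every $\theta$, and taking the supremum over $\theta$ yields $w_A(T)^2\le\frac14\|TT^{\sharp_A}+T^{\sharp_A}T\|_A+\frac12 w_A(T^2)$, which is the first inequality. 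For the second, $\|TT^{\sharp_A}+T^{\sharp_A}T\|_A\le\|TT^{\sharp_A}\|_A+\|T^{\sharp_A}T\|_A=2\|T\|_A^2$ and $w_A(T^2)\le\|T^2\|_A\le\|T\|_A^2$, so the quantity under the radical is at most $4\|T\|_A^2$. The only genuinely delicate point is the identity $\|R^2\|_A=\|R\|_A^2$ for $A$-selfadjoint $R$; everything else is bookkeeping with the phases $e^{i\theta}$ and the characterization in Theorem~\ref{T.2.30}.
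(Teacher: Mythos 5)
Your proof is correct, and its overall architecture is the one the paper uses: express $w_A(T)$ via Theorem \ref{T.2.30}, square the rotated $A$-real part, expand, and recognize the oscillating term as something controlled by $w_A(T^2)$ through a second application of Theorem \ref{T.2.30}. The one place where you genuinely diverge is the justification of $\|R_\theta\|_A^2=\|R_\theta^2\|_A$. The paper sidesteps the fact that $R_\theta\neq R_\theta^{\sharp_A}$ in general (since $(T^{\sharp_A})^{\sharp_A}=PTP$) by replacing $R_\theta$ with $R_\theta^{\sharp_A}=\tfrac12\big((e^{i\theta}T)^{\sharp_A}+((e^{i\theta}T)^{\sharp_A})^{\sharp_A}\big)$, which has the same $A$-norm and \emph{is} fixed by $\sharp_A$, so that the recorded identity $\|RR^{\sharp_A}\|_A=\|R\|_A^2$ applies verbatim; the price is that the whole expansion is carried out in terms of $T^{\sharp_A}$ and $(T^{\sharp_A})^{\sharp_A}$ and then translated back using $\|\cdot\|_A$-invariance under $\sharp_A$. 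You instead prove the small lemma that $\|R^2\|_A=\|R\|_A^2$ for every $A$-selfadjoint $R$ — via $A(R^2-R^{\sharp_A}R)=0$, or equivalently via the $A$-positivity of $R^2$ — which lets you work with $T$ itself throughout and keeps the bookkeeping visibly cleaner. Both routes are sound; your lemma is a reusable fact (it would also streamline the proof of the companion lower bound with $c_A(T^2)$), while the paper's device stays entirely within identities already listed in its preliminaries.
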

\begin{proof}
By Theorem \ref{T.2.30}, we have
\begin{align*}
w_A(T) & = \displaystyle{\sup_{\theta \in \mathbb{R}}}{\left\|\frac{e^{i\theta}T + (e^{i\theta}T)^{\sharp_A}}{2}\right\|}_A
\\& = \frac{1}{2}\displaystyle{\sup_{\theta \in \mathbb{R}}}{\left\|(e^{i\theta}T)^{\sharp_A} + \big((e^{i\theta}T)^{\sharp_A}\big)^{\sharp_A}\right\|}_A
\\& \qquad \qquad \qquad \qquad \qquad\big(\mbox{since ${\|R\|}_A = {\|R^{\sharp_A}\|}_A$ for every $R\in\mathbb{B}_{A}(\mathcal{H})$}\big)
\\& = \frac{1}{2}\displaystyle{\sup_{\theta \in \mathbb{R}}}
\sqrt{{\left\|\left((e^{i\theta}T)^{\sharp_A} + \big((e^{i\theta}T)^{\sharp_A}\big)^{\sharp_A}\right)
\left(\big((e^{i\theta}T)^{\sharp_A}\big)^{\sharp_A} + (e^{i\theta}T)^{\sharp_A}\right)\right\|}_A}
\\& \qquad \qquad \qquad \qquad \qquad\big(\mbox{since ${\|R\|}^2_A = {\|RR^{\sharp_A}\|}_A$ for every $R\in\mathbb{B}_{A}(\mathcal{H})$}\big)
\\& = \frac{1}{2}\displaystyle{\sup_{\theta \in \mathbb{R}}}
\sqrt{{\left\|T^{\sharp_A} (T^{\sharp_A})^{\sharp_A} + (T^{\sharp_A})^{\sharp_A} T^{\sharp_A}
+ \big((e^{i\theta}T)^{\sharp_A}\big)^2 + \Big(\big((e^{i\theta}T)^{\sharp_A}\big)^{\sharp_A}\Big)^2\right\|}_A}
\\& \leq \frac{1}{2}\displaystyle{\sup_{\theta \in \mathbb{R}}}
\sqrt{{\Big\|T^{\sharp_A} (T^{\sharp_A})^{\sharp_A} + (T^{\sharp_A})^{\sharp_A} T^{\sharp_A} \Big\|}_A
+ {\left\|\big((e^{i\theta}T)^{\sharp_A}\big)^2 + \Big(\big((e^{i\theta}T)^{\sharp_A}\big)^{\sharp_A}\Big)^2\right\|}_A}
\\& \leq \frac{1}{2}\displaystyle{\sup_{\theta \in \mathbb{R}}}
\sqrt{{\Big\|TT^{\sharp_A} + T^{\sharp_A} T \Big\|}_A
+ 2{\left\|\frac{e^{2i\theta}T^2 + \big(e^{2i\theta}T^2\big)^{\sharp_A}}{2}\right\|}_A}
\\& \qquad \qquad \qquad \qquad \qquad\big(\mbox{since ${\|R^{\sharp_A}\|}_A = {\|R\|}_A$ for every $R\in\mathbb{B}_{A}(\mathcal{H})$}\big)
\\& \leq \frac{1}{2}\sqrt{{\Big\|TT^{\sharp_A} + T^{\sharp_A} T \Big\|}_A
+ 2\displaystyle{\sup_{\theta \in \mathbb{R}}}{\left\|\frac{e^{2i\theta}T^2 + \big(e^{2i\theta}T^2\big)^{\sharp_A}}{2}\right\|}_A}
\\& = \frac{1}{2}\sqrt{{\Big\|TT^{\sharp_A} + T^{\sharp_A} T \Big\|}_A + 2w_A(T^2)}
\qquad \qquad\big(\mbox{by Theorem \ref{T.2.30}}\big)
\\& \leq \frac{1}{2}\sqrt{{\|TT^{\sharp_A}\|}_A + {\|T^{\sharp_A} T\|}_A + 2w_A(T^2)}
\\& = \frac{\sqrt{2}}{2}\sqrt{{\|T\|}^2_A + w_A(T^2)}\qquad\big(\mbox{since ${\|RR^{\sharp_A}\|}_A = {\|R\|}^2_A$ for every $R\in\mathbb{B}_{A}(\mathcal{H})$}\big)
\\& \leq \frac{\sqrt{2}}{2}\sqrt{{\|T\|}^2_A + {\|T^2\|}_A} \qquad \qquad\big(\mbox{by Corollary \ref{C.2.6}}\big)
\\& \leq \frac{\sqrt{2}}{2}\sqrt{{\|T\|}^2_A + {\|T\|}^2_A} = {\|T\|}_A,
\end{align*}
which proves the desired inequalities.
\end{proof}
%%%%%%%%%%%%%%%%%%%%%%%%%%%%%%%%%%
In the following theorem, we establish an improvement of the first inequality in (\ref{I.1.C.2.6}).
%%%%%%%%%%%%%%%%%%%%%%%%%%%%%%%%%%
\begin{theorem}\label{T.2.3}
Let $T\in\mathbb{B}_{A}(\mathcal{H})$. Then
\begin{align*}
\frac{1}{2}{\|T\|}_A \leq \frac{1}{2}\sqrt{{\big\|TT^{\sharp_A} + T^{\sharp_A} T\big\|}_A + 2c_A(T^2)} \leq w_A(T).
\end{align*}
\end{theorem}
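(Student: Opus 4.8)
The plan is to run the argument behind the upper bound $w_A(T)\le\frac12\sqrt{\|TT^{\sharp_A}+T^{\sharp_A}T\|_A+2w_A(T^2)}$ in the reverse direction, replacing the use of the triangle inequality (which produced $w_A(T^2)$) by a pointwise estimate from below (which will produce $c_A(T^2)$). Accordingly, I would start from Theorem \ref{T.2.30} and set, for $\theta\in\mathbb{R}$,
\[
B_\theta:=\frac{(e^{i\theta}T)^{\sharp_A}+\big((e^{i\theta}T)^{\sharp_A}\big)^{\sharp_A}}{2},
\]
so that, using $\|R^{\sharp_A}\|_A=\|R\|_A$, one has $w_A(T)=\sup_{\theta\in\mathbb{R}}\|B_\theta\|_A$. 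As in the proof of Lemma \ref{l.2.1.5}, each $B_\theta$ is $A$-selfadjoint; hence $B_\theta^2$ is $A$-positive and $\|B_\theta\|_A^2=\|B_\theta^2\|_A$. Expanding the square and collecting the phases $e^{\pm i\theta}$ gives
\[
4B_\theta^2=G+H_\theta,
\]
where $G:=T^{\sharp_A}(T^{\sharp_A})^{\sharp_A}+(T^{\sharp_A})^{\sharp_A}T^{\sharp_A}$ is $A$-positive (being a sum of operators of the form $SS^{\sharp_A}$ and $S^{\sharp_A}S$) and $H_\theta:=e^{-2i\theta}(T^{\sharp_A})^2+e^{2i\theta}\big((T^{\sharp_A})^{\sharp_A}\big)^2$.

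Next, since $4B_\theta^2=G+H_\theta$ is $A$-positive, $4\|B_\theta\|_A^2=\sup_{\|x\|_A=1}\langle(G+H_\theta)x,x\rangle_A$. Using $\big((T^{\sharp_A})^{\sharp_A}\big)^2=\big((T^{\sharp_A})^2\big)^{\sharp_A}$ together with $\langle S^{\sharp_A}x,x\rangle_A=\overline{\langle Sx,x\rangle_A}$, one computes $\langle H_\theta x,x\rangle_A=2\,\mathrm{Re}\big(e^{-2i\theta}\langle(T^{\sharp_A})^2x,x\rangle_A\big)$, so that $\sup_{\theta\in\mathbb{R}}\langle H_\theta x,x\rangle_A=2\big|\langle(T^{\sharp_A})^2x,x\rangle_A\big|$ for each fixed $x$. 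Interchanging the two suprema then yields
\begin{align*}
4w_A^2(T)=\sup_{\|x\|_A=1}\Big(\langle Gx,x\rangle_A+2\big|\langle(T^{\sharp_A})^2x,x\rangle_A\big|\Big)\ge\|G\|_A+2c_A\big((T^{\sharp_A})^2\big),
\end{align*}
where the final step uses $\big|\langle(T^{\sharp_A})^2x,x\rangle_A\big|\ge c_A\big((T^{\sharp_A})^2\big)$ for every admissible $x$ and then $\sup_{\|x\|_A=1}\langle Gx,x\rangle_A=\|G\|_A$ (valid since $G$ is $A$-positive). Finally $\|G\|_A=\|(TT^{\sharp_A}+T^{\sharp_A}T)^{\sharp_A}\|_A=\|TT^{\sharp_A}+T^{\sharp_A}T\|_A$ and $c_A\big((T^{\sharp_A})^2\big)=c_A\big((T^2)^{\sharp_A}\big)=c_A(T^2)$, which gives $w_A(T)\ge\frac12\sqrt{\|TT^{\sharp_A}+T^{\sharp_A}T\|_A+2c_A(T^2)}$, the second inequality.

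For the first inequality I would just observe that $c_A(T^2)\ge 0$ and that, since $TT^{\sharp_A}$ and $T^{\sharp_A}T$ are $A$-positive, $\|TT^{\sharp_A}+T^{\sharp_A}T\|_A\ge\|TT^{\sharp_A}\|_A=\|T\|_A^2$; hence the quantity under the square root is at least $\|T\|_A^2$. The only delicate point is the bookkeeping with the non-involutive adjoint $\sharp_A$ (so $(S^{\sharp_A})^{\sharp_A}=PSP$ rather than $S$) when expanding $4B_\theta^2$ and when moving between $(T^{\sharp_A})^2$ and $(T^2)^{\sharp_A}$; this causes no trouble because every quantity that actually enters the argument is encoded through the pairing $\langle\cdot\,x,x\rangle_A$, under which $\sharp_A$ behaves like an ordinary adjoint.
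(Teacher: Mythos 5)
Your argument is correct and follows essentially the same route as the paper: both start from Theorem \ref{T.2.30}, pass to the $A$-selfadjoint operator $B_\theta$ with $\|B_\theta\|_A^2=\|B_\theta^2\|_A$, expand $4B_\theta^2$ into the $\theta$-independent part $T^{\sharp_A}(T^{\sharp_A})^{\sharp_A}+(T^{\sharp_A})^{\sharp_A}T^{\sharp_A}$ plus the oscillating part, and optimize the phase to extract $2|\langle (T^2)^{\sharp_A}x,x\rangle_A|\ge 2c_A(T^2)$. The only difference is bookkeeping: the paper fixes $x$ and chooses $\theta=\theta(x)$ before bounding the norm below by the quadratic form, whereas you interchange the two suprema — the same estimate either way.
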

\begin{proof}
Let $x\in \mathcal{H}$ with ${\|x\|}_A = 1$.
Suppose that $\big|{\langle T^{\sharp_A} T^{\sharp_A} x, x\rangle}_A\big| = e^{-2i\theta}{\langle T^{\sharp_A} T^{\sharp_A} x, x\rangle}_A$
for some real number $\theta$.
Then, we have
\begin{align*}
e^{2i\theta}{\big\langle(T^{\sharp_A})^{\sharp_A}(T^{\sharp_A})^{\sharp_A} x, x \big\rangle}_A
= e^{2i\theta} \overline{{\langle T^{\sharp_A} T^{\sharp_A} x, x \rangle}_A}
= \big|{\langle T^{\sharp_A} T^{\sharp_A} x, x\rangle}_A\big|
= \big|{\langle x, T^2x\rangle}_A\big|.
\end{align*}
Thus
\begin{align}\label{I.100.T.2.3}
e^{-2i\theta} {\langle T^{\sharp_A} T^{\sharp_A} x, x \rangle}_A = \big|{\langle x, T^2x\rangle}_A\big| = e^{2i\theta}{\big\langle(T^{\sharp_A})^{\sharp_A}(T^{\sharp_A})^{\sharp_A} x, x \big\rangle}_A.
\end{align}
So, by Theorem \ref{T.2.30}, we obtain
\begin{align*}
4w^2_A(T) & \geq {\left\|e^{i\theta}T + (e^{i\theta}T)^{\sharp_A}\right\|}^2_A
\\&= {\left\|(e^{i\theta}T)^{\sharp_A} + \big((e^{i\theta}T)^{\sharp_A}\big)^{\sharp_A}\right\|}^2_A
\\& \qquad \qquad \qquad \qquad \qquad\big(\mbox{since ${\|R\|}_A = {\|R^{\sharp_A}\|}_A$ for every $R\in\mathbb{B}_{A}(\mathcal{H})$}\big)
\\& = {\left\|\left((e^{i\theta}T)^{\sharp_A} + \big((e^{i\theta}T)^{\sharp_A}\big)^{\sharp_A}\right)
\left((e^{i\theta}T)^{\sharp_A} + \big((e^{i\theta}T)^{\sharp_A}\big)^{\sharp_A}\right)^{\sharp_A}\right\|}_A
\\& \qquad \qquad \qquad \qquad \qquad\big(\mbox{since ${\|R\|}^2_A = {\|RR^{\sharp_A}\|}_A$ for every $R\in\mathbb{B}_{A}(\mathcal{H})$}\big)
\\& = {\Big\|T^{\sharp_A} (T^{\sharp_A})^{\sharp_A} + (T^{\sharp_A})^{\sharp_A} T^{\sharp_A}
+ e^{-2i\theta}T^{\sharp_A} T^{\sharp_A} + e^{2i\theta}(T^{\sharp_A})^{\sharp_A}(T^{\sharp_A})^{\sharp_A}\Big\|}_A
\\& \qquad \qquad \qquad \qquad \qquad\big(\mbox{since $\big((R^{\sharp_A})^{\sharp_A}\big)^{\sharp_A} = R^{\sharp_A}$ for every $R\in\mathbb{B}_{A}(\mathcal{H})$}\big)
\\& \geq \Big|{\Big \langle \Big(T^{\sharp_A} (T^{\sharp_A})^{\sharp_A} + (T^{\sharp_A})^{\sharp_A} T^{\sharp_A}
+ e^{-2i\theta}T^{\sharp_A} T^{\sharp_A} + e^{2i\theta}(T^{\sharp_A})^{\sharp_A}(T^{\sharp_A})^{\sharp_A}\Big)x, x\Big \rangle}_A\Big|
\\& = \Big|{\Big \langle \Big(T^{\sharp_A} (T^{\sharp_A})^{\sharp_A} + (T^{\sharp_A})^{\sharp_A} T^{\sharp_A}\Big)x, x\Big \rangle}_A
\\& \qquad \qquad \qquad \qquad + e^{-2i\theta} {\langle T^{\sharp_A} T^{\sharp_A} x, x \rangle}_A
+ e^{2i\theta}{\big\langle(T^{\sharp_A})^{\sharp_A}(T^{\sharp_A})^{\sharp_A} x, x \big\rangle}_A\Big|
\\& = \Big {\langle \Big(T^{\sharp_A} (T^{\sharp_A})^{\sharp_A} + (T^{\sharp_A})^{\sharp_A} T^{\sharp_A}\Big)x, x\Big \rangle}_A
+ 2\big|{\langle x, T^2x \rangle}_A\big| \qquad \qquad\big(\mbox{by \ref{I.100.T.2.3}}\big)
\\& \geq \Big {\langle \Big(T^{\sharp_A} (T^{\sharp_A})^{\sharp_A} + (T^{\sharp_A})^{\sharp_A} T^{\sharp_A}\Big)x, x\Big \rangle}_A
+ 2c_A(T^2).
\end{align*}
From this it follows that
\begin{align*}
\frac{1}{2}\sqrt{\Big {\langle \Big(T^{\sharp_A} (T^{\sharp_A})^{\sharp_A} + (T^{\sharp_A})^{\sharp_A} T^{\sharp_A}\Big)x, x\Big \rangle}_A
+ 2c_A(T^2)} \leq w_A(T).
\end{align*}
Taking the supremum over $x \in \mathcal{H}$ with ${\|x\|}_A = 1$ in the above inequality we get
\begin{align}\label{I.3.T.2.3}
\frac{1}{2}\sqrt{{\big\|TT^{\sharp_A} + T^{\sharp_A} T\big\|}_A + 2c_A(T^2)} \leq w_A(T).
\end{align}
Furthermore, since $T^{\sharp_A} T$ is an $A$-positive operator,
from ${\big\|TT^{\sharp_A} + T^{\sharp_A} T\big\|}_A \geq {\|TT^{\sharp_A}\|}_A = {\|T\|}^2_A$ it follows that
\begin{align}\label{I.4.T.2.3}
\frac{1}{2}\sqrt{{\big\|TT^{\sharp_A} + T^{\sharp_A} T\big\|}_A + 2c_A(T^2)}
\geq \frac{1}{2}\sqrt{{\big\|TT^{\sharp_A} + T^{\sharp_A} T\big\|}_A} \geq \frac{1}{2}{\|T\|}_A.
\end{align}
Now, by (\ref{I.3.T.2.3}) and (\ref{I.4.T.2.3}) we conclude that
\begin{align*}
\frac{1}{2}{\|T\|}_A \leq \frac{1}{2}\sqrt{{\big\|TT^{\sharp_A} + T^{\sharp_A} T\big\|}_A + 2c_A(T^2)} \leq w_A(T).
\end{align*}
\end{proof}
%%%%%%%%%%%%%%%%%%%%%%%%%%%%%
Now, we give another improvement of the first inequality in (\ref{I.1.C.2.6}).
%%%%%%%%%%%%%%%%%%%%%%%%%%%%%
\begin{theorem}\label{T.2.3}
Let $T\in\mathbb{B}_{A}(\mathcal{H})$. Then
\begin{align*}
\frac{1}{2}{\|T\|}_A\leq \sqrt{\frac{w^2_A(T)}{2} + \frac{w_A(T)}{2}\sqrt{w^2_A(T) - c^2_A(T)}} \leq w_A(T).
\end{align*}
\end{theorem}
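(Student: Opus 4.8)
The right-hand inequality needs no real work, so I would dispose of it first: squaring it and cancelling $\frac12 w^2_A(T)$ reduces it to $w_A(T)\sqrt{w^2_A(T)-c^2_A(T)}\le w^2_A(T)$, i.e.\ to $c^2_A(T)\ge 0$, which is trivial. (If $w_A(T)=0$, then ${\|T\|}_A=0$ by Corollary \ref{C.2.6} and all three quantities vanish, so from now on I assume $w:=w_A(T)>0$.) For the left-hand inequality, squaring and multiplying by $4$ reduces the claim to ${\|T\|}^2_A\le 2w^2+2w\sqrt{w^2-c^2_A(T)}$. The plan is to prove the pointwise estimate ${\|Tx\|}^2_A\le 2w^2+2w\sqrt{w^2-\big|{\langle Tx,x\rangle}_A\big|^2}$ for every $x\in\mathcal{H}$ with ${\|x\|}_A=1$, and then take the supremum over such $x$: the supremum of the left side is ${\|T\|}^2_A$, while $c_A(T)\le\big|{\langle Tx,x\rangle}_A\big|$ forces the right side below $2w^2+2w\sqrt{w^2-c^2_A(T)}$.

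To get the pointwise estimate I would, given a unit vector $x$, rotate so that the diagonal entry becomes real: set $d:=\big|{\langle Tx,x\rangle}_A\big|$ (note $0\le d\le w$), pick $\psi\in\mathbb{R}$ with ${\langle Tx,x\rangle}_A=de^{i\psi}$, and work with $R:=e^{-i\psi}T\in\mathbb{B}_{A}(\mathcal{H})$, together with $S:=\frac{R+R^{\sharp_A}}{2}$ and $K:=\frac{R-R^{\sharp_A}}{2i}$, so that $R=S+iK$. Since $w_A(R)=w_A(T)=w$, Corollary \ref{C.2.5} applied to $R$ gives ${\|S\|}_A\le w$ and ${\|K\|}_A\le w$, and the $A$-adjoint identity ${\langle R^{\sharp_A}x,x\rangle}_A={\langle x,Rx\rangle}_A$ yields ${\langle Sx,x\rangle}_A=\mbox{Re}\,{\langle Rx,x\rangle}_A=d$ and ${\langle Kx,x\rangle}_A=\mbox{Im}\,{\langle Rx,x\rangle}_A=0$. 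Writing $u:=Sx$, $v:=Kx$, we have $u+iv=Rx=e^{-i\psi}Tx$, hence the Pythagoras-type expansion ${\|Tx\|}^2_A={\|u+iv\|}^2_A={\|u\|}^2_A+{\|v\|}^2_A+2\,\mbox{Im}\,{\langle u,v\rangle}_A$.

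The crux — and the only step with any content — is extracting the $\sqrt{w^2-d^2}$ factor from the cross term by splitting off the $x$-direction. Since ${\langle u,x\rangle}_A=d$ and ${\|x\|}_A=1$, the vector $u_0:=u-dx$ satisfies ${\langle u_0,x\rangle}_A=0$ and ${\|u_0\|}^2_A={\|u\|}^2_A-d^2\le w^2-d^2$; as ${\langle v,x\rangle}_A=0$ we also get ${\langle dx,v\rangle}_A=0$, so ${\langle u,v\rangle}_A={\langle u_0,v\rangle}_A$ and, by the Cauchy--Schwarz inequality, $\big|\mbox{Im}\,{\langle u,v\rangle}_A\big|\le{\|u_0\|}_A{\|v\|}_A\le w\sqrt{w^2-d^2}$. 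Combined with ${\|u\|}_A,{\|v\|}_A\le w$, this gives ${\|Tx\|}^2_A\le 2w^2+2w\sqrt{w^2-d^2}$, which is exactly the pointwise estimate, and taking the supremum over unit vectors completes the proof as outlined above. I do not expect a serious obstacle here; the one thing to be careful about is that $S$ and $K$ need not be genuinely $A$-selfadjoint (since $(R^{\sharp_A})^{\sharp_A}\ne R$ in general), so ${\langle Sx,x\rangle}_A=\mbox{Re}\,{\langle Rx,x\rangle}_A$ and the bounds ${\|S\|}_A,{\|K\|}_A\le w$ should be derived directly from the $A$-adjoint relation and Corollary \ref{C.2.5}, not from naive real/imaginary-part manipulations.
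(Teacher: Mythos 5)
Your proposal is correct and follows essentially the same route as the paper: rotate so that ${\langle Tx,x\rangle}_A$ becomes nonnegative, split the rotated operator into its $A$-Cartesian parts $M,N$ with ${\langle Nx,x\rangle}_A=0$ and ${\|M\|}_A,{\|N\|}_A\le w_A(T)$ via Corollary \ref{C.2.5}, and isolate the component of $Mx$ along $x$ to extract the factor $\sqrt{w_A^2(T)-|{\langle Tx,x\rangle}_A|^2}$ before taking the supremum. The only cosmetic difference is that you bound the cross term $2\,\mathrm{Im}\,{\langle u_0,v\rangle}_A$ by Cauchy--Schwarz, whereas the paper applies the triangle inequality to ${\|(Mx-{\langle Mx,x\rangle}_A x)+iNx\|}_A$; both yield the identical pointwise estimate.
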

\begin{proof}
Clearly, $\sqrt{\frac{w^2_A(T)}{2} + \frac{w_A(T)}{2}\sqrt{w^2_A(T) - c^2_A(T)}} \leq w_A(T)$.
Now, let $x\in \mathcal{H}$ with ${\|x\|}_A = 1$.
Suppose that $\big|{\langle Tx, x\rangle}_A\big| = e^{i\theta}{\langle Tx, x\rangle}_A$
for some real number $\theta$.
Put $M : = \frac{e^{i\theta}T + (e^{i\theta}T)^{\sharp_A}}{2}$ and
$N : = \frac{e^{i\theta}T - (e^{i\theta}T)^{\sharp_A}}{2i}$.
Then $M + iN = e^{i\theta}T$ and
\begin{align*}
{\langle Mx, x\rangle}_A + i{\langle Nx, x\rangle}_A = {\langle e^{i\theta}Tx, x\rangle}_A = \big|{\langle Tx, x\rangle}_A\big|\geq 0.
\end{align*}
It follows from ${\langle Nx, x\rangle}_A = \mbox{Im}{\langle e^{i\theta}Tx, x\rangle}_A \in\mathbb{R}$ that
\begin{align*}
{\langle e^{i\theta}Tx, x\rangle}_A = {\langle Mx, x\rangle}_A,
\qquad {\langle Nx, x\rangle}_A = 0.
\end{align*}
So, we have
\begin{align*}
\frac{1}{4}{\|Tx\|}^2_A & =
\frac{1}{4}\left({\Big\|e^{i\theta}Tx - {\langle e^{i\theta}Tx, x\rangle}_A x \Big\|}^2_A
+ |{\langle Tx, x\rangle}_A|^2\right)\\
& = \frac{1}{4}\left({\Big\| Mx - {\langle Mx, x\rangle}_A x + iNx \Big\|}^2_A
+ |{\langle Tx, x\rangle}_A|^2\right)
\\& \qquad \qquad \qquad \qquad \qquad \qquad \qquad \qquad \qquad \qquad
\text{\big(since }{\langle Nx, x\rangle}_A = 0\text{\big)}\\
& \leq \frac{1}{4}\left(\Big(\Big{\|Mx - {\langle Mx, x\rangle}_A x \Big\|}_A + {\|Nx\|}_A\Big)^2
+ |{\langle Tx, x\rangle}_A|^2\right)\\
& = \frac{1}{4}\left(\left(\sqrt{{\|Mx\|}^2_A - |{\langle Mx, x\rangle}_A|^2} + {\|Nx\|}_A\right)^2
+ |{\langle Tx, x\rangle}_A|^2\right)\\
& = \frac{1}{4}\left(\left(\sqrt{{\|Mx\|}^2_A - |{\langle e^{i\theta}Tx, x\rangle}_A|^2} + {\|Nx\|}_A\right)^2
+ |{\langle Tx, x\rangle}_A|^2\right)\\
&\qquad \qquad \qquad \qquad \qquad \qquad \qquad \qquad \qquad
\text{\big(since }{\langle Mx, x\rangle}_A = {\langle e^{i\theta}Tx, x\rangle}_A\text{\big)}\\
&\leq \frac{1}{4}\left(\left(\sqrt{{\|M\|}^2_A - |{\langle Tx, x\rangle}_A|^2} + {\|N\|}_A\right)^2 + |{\langle Tx, x\rangle}_A|^2\right)\\
&\leq \frac{1}{4}\left(\left(\sqrt{w^2_A(T) - |{\langle Tx, x\rangle}_A|^2} + w_A(T)\right)^2 + |{\langle Tx, x\rangle}_A|^2\right)\\
&\qquad \qquad \qquad \qquad \qquad \qquad \qquad
\text{\big(since }{\|M\|}_A, {\|N\|}_A\leq w_A(e^{i\theta}T) = w_A(T)\text{\big)}\\
& = \frac{w^2_A(T)}{2} + \frac{w_A(T)}{2}\sqrt{w^2_A(T) - |{\langle Tx, x\rangle}_A|^2}.
\end{align*}
Hence
\begin{align}\label{I.1.T.2.3}
\frac{1}{2}{\|Tx\|}_A\leq \sqrt{\frac{w^2_A(T)}{2} + \frac{w_A(T)}{2}\sqrt{w^2_A(T) - |{\langle Tx, x\rangle}_A|^2}} \qquad ({\|x\|}_A = 1),
\end{align}
which implies
\begin{align*}
\frac{1}{2}{\|Tx\|}_A\leq \sqrt{\frac{w^2_A(T)}{2} + \frac{w_A(T)}{2}\sqrt{w^2_A(T) - c^2_A(T)}}.
\end{align*}
Taking the supremum over $x\in\mathcal{H}$ with ${\|x\|}_A = 1$
in the above inequality we get
\begin{align*}
\frac{1}{2}{\|T\|}_A\leq \sqrt{\frac{w^2_A(T)}{2} + \frac{w_A(T)}{2}\sqrt{w^2_A(T) - c^2_A(T)}}.
\end{align*}
\end{proof}
%%%%%%%%%%%%%%%%%%%%%%%%%%
We end this section with a considerable improvement of the first inequality in (\ref{I.1.C.2.6}).
%%%%%%%%%%%%%%%%%%%%%%%%%%
\begin{theorem}\label{T.2.12}
Let $T\in\mathbb{B}_{A}(\mathcal{H})$. Then
\begin{align*}
\frac{1}{2}{\|T\|}_A \leq \max\Big\{|\sin|_AT, \frac{\sqrt{2}}{2}\Big\}w_A(T)\leq w_A(T).
\end{align*}
\end{theorem}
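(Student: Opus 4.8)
The plan is to bootstrap from the estimate (\ref{I.1.T.2.3}) obtained inside the proof of the previous theorem, namely
\begin{align*}
\tfrac{1}{2}{\|Tx\|}_A \leq \sqrt{\tfrac{w^2_A(T)}{2} + \tfrac{w_A(T)}{2}\sqrt{w^2_A(T) - \big|{\langle Tx, x\rangle}_A\big|^2}}
\qquad\big({\|x\|}_A = 1\big),
\end{align*}
combined with the defining inequality $\big|{\langle Tx, x\rangle}_A\big| \geq {|\cos|}_AT\,{\|Tx\|}_A{\|x\|}_A$. First I would dispose of the right-hand inequality, which is immediate: since ${|\sin|}_AT = \sqrt{1 - {|\cos|}^2_AT} \leq 1$, we have $\max\{{|\sin|}_AT, \tfrac{\sqrt{2}}{2}\} \leq 1$, hence $\max\{{|\sin|}_AT, \tfrac{\sqrt{2}}{2}\}w_A(T) \leq w_A(T)$. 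For the left-hand inequality I may assume $w_A(T) > 0$, since otherwise Corollary \ref{C.2.6} gives ${\|T\|}_A \leq 2w_A(T) = 0$ and there is nothing to prove.

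Next, fixing $x \in \mathcal{H}$ with ${\|x\|}_A = 1$ and abbreviating $s := {\|Tx\|}_A$, $w := w_A(T)$, $c := {|\cos|}_AT$, I would argue as follows. If $s = 0$ the claim is trivial, so assume $s \neq 0$; then ${\|Tx\|}_A{\|x\|}_A \neq 0$, and the definitions of ${|\cos|}_AT$ and of $w_A(T)$ give $cs \leq \big|{\langle Tx, x\rangle}_A\big| \leq w$. From this chain both $w^2 - \big|{\langle Tx, x\rangle}_A\big|^2$ and $w^2 - c^2 s^2$ are nonnegative, and $\sqrt{w^2 - \big|{\langle Tx, x\rangle}_A\big|^2} \leq \sqrt{w^2 - c^2 s^2}$; substituting this into the square of the estimate above yields
\begin{align*}
\frac{s^2}{4} \leq \frac{w^2}{2} + \frac{w}{2}\sqrt{w^2 - c^2 s^2},
\qquad \text{equivalently} \qquad
\frac{s^2 - 2w^2}{4} \leq \frac{w}{2}\sqrt{w^2 - c^2 s^2}.
\end{align*}

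To conclude I would split into two cases according to the sign of $s^2 - 2w^2$. If $s^2 \leq 2w^2$, then $s \leq \sqrt{2}\,w = 2\cdot\tfrac{\sqrt{2}}{2}\,w$. If $s^2 > 2w^2$, then both sides of the last inequality are nonnegative, so squaring and simplifying reduces it to $s^2\big(s^2 - 4w^2(1 - c^2)\big) \leq 0$, whence $s^2 \leq 4w^2(1 - c^2)$, i.e. $s \leq 2w\sqrt{1 - c^2} = 2w\,{|\sin|}_AT$. In either case $\tfrac{1}{2}{\|Tx\|}_A \leq \max\{{|\sin|}_AT, \tfrac{\sqrt{2}}{2}\}w_A(T)$, and taking the supremum over all $x \in \mathcal{H}$ with ${\|x\|}_A = 1$ gives $\tfrac{1}{2}{\|T\|}_A \leq \max\{{|\sin|}_AT, \tfrac{\sqrt{2}}{2}\}w_A(T)$. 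I expect the only subtle point to be the monotonicity step $\sqrt{w^2 - \big|{\langle Tx,x\rangle}_A\big|^2} \leq \sqrt{w^2 - c^2 s^2}$ together with the later squaring: both require that every radicand be nonnegative, which is guaranteed precisely by the chain $c\,{\|Tx\|}_A \leq \big|{\langle Tx,x\rangle}_A\big| \leq w_A(T)$.
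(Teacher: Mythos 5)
Your proposal is correct and follows essentially the same route as the paper: both start from the pointwise estimate (\ref{I.1.T.2.3}), insert $\big|{\langle Tx,x\rangle}_A\big| \geq {|\cos|}_AT\,{\|Tx\|}_A$, and split into the two cases according to the sign of ${\|Tx\|}^2_A - 2w^2_A(T)$. Your version is in fact slightly more careful than the paper's, since you check the degenerate cases $w_A(T)=0$ and ${\|Tx\|}_A=0$ and the nonnegativity of the radicands, and you take the supremum over $x$ only after the case analysis, but these are refinements of the same argument rather than a different one.
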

\begin{proof}
Obviously,
\begin{align*}
\max\Big\{{|\sin|}_AT, \frac{\sqrt{2}}{2}\Big\}w_A(T)\leq w_A(T).
\end{align*}
Furthermore, by (\ref{I.1.T.2.3}) we have
\begin{align*}
\frac{1}{2}{\|Tx\|}_A\leq \sqrt{\frac{w^2_A(T)}{2} + \frac{w_A(T)}{2}\sqrt{w^2_A(T) - |{\langle Tx, x\rangle}_A|^2}} \qquad ({\|x\|}_A = 1),
\end{align*}
and hence
\begin{align*}
\frac{1}{2}{\|Tx\|}_A\leq \sqrt{\frac{w^2_A(T)}{2} + \frac{w_A(T)}{2}\sqrt{w^2_A(T) - {\|Tx\|}^2_A{|\cos|}^2_AT}}.
\end{align*}
From this it follows that
\begin{align}\label{I.1.T.2.12}
{\|Tx\|}^2_A - 2w^2_A(T)\leq 2w_A(T)\sqrt{w^2_A(T) - {\|Tx\|}^2_A{|\cos|}^2_AT}.
\end{align}
We consider two cases.\\

Case 1. ${\|Tx\|}^2_A - 2w^2_A(T)\leq 0$. Then we reach that ${\|Tx\|}_A \leq \sqrt{2}w_A(T)$ and so
\begin{align}\label{I.2.T.2.12}
\frac{1}{2}{\|T\|}_A \leq \frac{\sqrt{2}}{2}w_A(T).
\end{align}

Case 2. ${\|Tx\|}^2_A - 2w^2_A(T) > 0$. By (\ref{I.1.T.2.12}) it follows that
\begin{align*}
{\| Tx\|}^4_A - 4{\|Tx\|}^2_Aw^2_A(T) + 4w^4_A(T)\leq 4w^4_A(T) - 4w^2_A(T){\|Tx\|}^2_A{|\cos|}^2_AT.
\end{align*}
Thus
\begin{align*}
{\|Tx\|}^2_A \leq 4\left(1 - {|\cos|}^2_AT\right)w^2_A(T).
\end{align*}
This yields
\begin{align*}
\frac{1}{2}{\|Tx\|}_A \leq |\sin|_ATw_A(T),
\end{align*}
and hence
\begin{align}\label{I.3.T.2.12}
\frac{1}{2}{\|T\|}_A \leq |\sin|_ATw_A(T).
\end{align}
Now, by (\ref{I.2.T.2.12}) and (\ref{I.3.T.2.12}) we obtain
\begin{align*}
\frac{1}{2}{\|T\|}_A \leq \max\Big\{|\sin|_AT, \frac{\sqrt{2}}{2}\Big\}w_A(T).
\end{align*}
\end{proof}
%%%%%%%%%%%%%%%%%%%%%%%%%%%%%%%%%%%%%%%%%%%%%
%%%%%%%%%%%%%%%%%%%%%%%%%%%%%%%%%%%%%%%%%%%%%
\section{Upper bounds for the $A$-numerical radius of products of operators}
%%%%%%%%%%%%%%%%%%%%%%%%%%
In this section, we derive upper bounds for the $A$-numerical radius of products of semi-Hilbertian space operators.
Since for every $T, S\in\mathbb{B}_{A}(\mathcal{H})$ we have ${\|TS\|}_A \leq {\|T\|}_A {\|S\|}_A$,
by the inequalities of (\ref{I.1.C.2.6}) we obtain
\begin{align}\label{I.10.1}
w_A(TS ) \leq {\|TS\|}_A \leq 2{\|T\|}_Aw_A(S) \leq 4w_A(T)w_A(S).
\end{align}
In the following theorems, we improve the inequalities \ref{I.10.1}.
To achieve our goal, we need the following lemma.
%%%%%%%%%%%%%%%%%%%%%%
\begin{lemma}\label{L.2.10}
Let $T, S\in\mathbb{B}_{A}(\mathcal{H})$. Then
\begin{align*}
w_A\Big((TS)^{\sharp_A} \pm T^{\sharp_A} S\Big)\leq 2w_A(T){\|S\|}_A .
\end{align*}
\end{lemma}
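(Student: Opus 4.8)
The plan is to reduce everything to the characterization in Theorem~\ref{T.2.30} together with the basic algebra of the $\sharp_A$-operation recalled in the preliminaries: $(RS)^{\sharp_A}=S^{\sharp_A}R^{\sharp_A}$, $\big((R^{\sharp_A})^{\sharp_A}\big)^{\sharp_A}=R^{\sharp_A}$, ${\|R^{\sharp_A}\|}_A={\|R\|}_A$, $w_A(R^{\sharp_A})=w_A(R)$, ${\|RS\|}_A\le{\|R\|}_A{\|S\|}_A$, and the obvious $w_A(e^{i\theta}R)=w_A(R)$. Put $R_{\pm}:=(TS)^{\sharp_A}\pm T^{\sharp_A}S=S^{\sharp_A}T^{\sharp_A}\pm T^{\sharp_A}S$, which lies in $\mathbb{B}_A(\mathcal{H})$ since $\mathbb{B}_A(\mathcal{H})$ is an algebra. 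By Theorem~\ref{T.2.30},
\[
w_A(R_{\pm})=\sup_{\theta\in\mathbb{R}}\frac{1}{2}{\big\|e^{i\theta}R_{\pm}+e^{-i\theta}R_{\pm}^{\sharp_A}\big\|}_A ,
\]
so it suffices to prove ${\big\|e^{i\theta}R_{\pm}+e^{-i\theta}R_{\pm}^{\sharp_A}\big\|}_A\le 4w_A(T){\|S\|}_A$ for every $\theta\in\mathbb{R}$.

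First I would compute $R_{\pm}^{\sharp_A}=(T^{\sharp_A})^{\sharp_A}(S^{\sharp_A})^{\sharp_A}\pm S^{\sharp_A}(T^{\sharp_A})^{\sharp_A}$ and expand $e^{i\theta}R_{\pm}+e^{-i\theta}R_{\pm}^{\sharp_A}$ into its four summands. The decisive step is to split this as $G_1^{\pm}+G_2^{\pm}$, where $G_1^{\pm}$ collects the two summands that begin with $S^{\sharp_A}$, namely $G_1^{\pm}=S^{\sharp_A}L_{\theta}^{\pm}$ with $L_{\theta}^{\pm}:=e^{i\theta}T^{\sharp_A}\pm e^{-i\theta}(T^{\sharp_A})^{\sharp_A}=e^{i\theta}T^{\sharp_A}\pm(e^{i\theta}T^{\sharp_A})^{\sharp_A}$, and $G_2^{\pm}:=\pm e^{i\theta}T^{\sharp_A}S+e^{-i\theta}(T^{\sharp_A})^{\sharp_A}(S^{\sharp_A})^{\sharp_A}$. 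The reason this grouping works is that $G_2^{\pm}$ is exactly the $\sharp_A$-adjoint of $G_1^{\pm}$: using $(T^{\sharp_A}S)^{\sharp_A}=S^{\sharp_A}(T^{\sharp_A})^{\sharp_A}$ and $\big((R^{\sharp_A})^{\sharp_A}\big)^{\sharp_A}=R^{\sharp_A}$ for $R=T,S$, one finds $(G_2^{\pm})^{\sharp_A}=S^{\sharp_A}\big(e^{i\theta}T^{\sharp_A}\pm e^{-i\theta}(T^{\sharp_A})^{\sharp_A}\big)=G_1^{\pm}$, and hence ${\|G_2^{\pm}\|}_A={\|G_1^{\pm}\|}_A$. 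This identity is what lets one avoid the fact that $(T^{\sharp_A})^{\sharp_A}$ is only $PTP$, not $T$.

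It then remains to bound ${\|G_1^{\pm}\|}_A={\|S^{\sharp_A}L_{\theta}^{\pm}\|}_A\le{\|S^{\sharp_A}\|}_A{\|L_{\theta}^{\pm}\|}_A={\|S\|}_A{\|L_{\theta}^{\pm}\|}_A$, for which I would show ${\|L_{\theta}^{\pm}\|}_A\le 2w_A(T)$. For the $+$ sign, $\tfrac12 L_{\theta}^{+}=\frac{e^{i\theta}T^{\sharp_A}+(e^{i\theta}T^{\sharp_A})^{\sharp_A}}{2}$ has $A$-norm at most $w_A(T^{\sharp_A})=w_A(T)$ by Theorem~\ref{T.2.30} applied to $T^{\sharp_A}$; for the $-$ sign, $\tfrac{1}{2i}L_{\theta}^{-}=\frac{e^{i\theta}T^{\sharp_A}-(e^{i\theta}T^{\sharp_A})^{\sharp_A}}{2i}$ has $A$-norm at most $w_A(e^{i\theta}T^{\sharp_A})=w_A(T)$ by Corollary~\ref{C.2.5} applied to $e^{i\theta}T^{\sharp_A}$. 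In either case ${\big\|e^{i\theta}R_{\pm}+e^{-i\theta}R_{\pm}^{\sharp_A}\big\|}_A\le{\|G_1^{\pm}\|}_A+{\|G_2^{\pm}\|}_A=2{\|G_1^{\pm}\|}_A\le 4w_A(T){\|S\|}_A$, uniformly in $\theta$; dividing by $2$ and taking the supremum over $\theta$ yields $w_A\big((TS)^{\sharp_A}\pm T^{\sharp_A}S\big)\le 2w_A(T){\|S\|}_A$.

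The main obstacle I anticipate is the bookkeeping of the grouping: once $e^{i\theta}R_{\pm}+e^{-i\theta}R_{\pm}^{\sharp_A}$ is expanded there is no term-by-term cancellation, and one must resist replacing $(T^{\sharp_A})^{\sharp_A}$ by $T$, which is false in general. The clean way around this is the observation above that the two natural halves of $e^{i\theta}R_{\pm}+e^{-i\theta}R_{\pm}^{\sharp_A}$ are $\sharp_A$-adjoints of one another, hence of equal $A$-norm; after that only the submultiplicativity ${\|XY\|}_A\le{\|X\|}_A{\|Y\|}_A$ and the already-established Theorem~\ref{T.2.30} and Corollary~\ref{C.2.5}—applied to $T^{\sharp_A}$ (resp. $e^{i\theta}T^{\sharp_A}$) and combined with $w_A(R^{\sharp_A})=w_A(R)$—are needed.
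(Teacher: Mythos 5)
Your proof is correct and follows essentially the same route as the paper's: both reduce the claim via Theorem~\ref{T.2.30} to bounding ${\big\|e^{i\theta}R_{\pm}+e^{-i\theta}R_{\pm}^{\sharp_A}\big\|}_A$, expand using $(TS)^{\sharp_A}=S^{\sharp_A}T^{\sharp_A}$ and $\big((R^{\sharp_A})^{\sharp_A}\big)^{\sharp_A}=R^{\sharp_A}$, and factor the result through the $A$-Hermitian part $\frac{e^{-i\theta}T^{\sharp_A}+(e^{-i\theta}T^{\sharp_A})^{\sharp_A}}{2}$, whose $A$-norm is at most $w_A(T^{\sharp_A})=w_A(T)$. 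The only cosmetic differences are that the paper first applies Theorem~\ref{T.2.30} to $\big((TS)^{\sharp_A}+T^{\sharp_A}S\big)^{\sharp_A}$ and transfers back via $w_A(R^{\sharp_A})=w_A(R)$ --- so the expansion factors directly as a sum of left and right multiples of that Hermitian part by $S^{\sharp_A}$ and $(S^{\sharp_A})^{\sharp_A}$, with no need for your (correct) observation that $G_1^{\pm}$ and $G_2^{\pm}$ are mutually $\sharp_A$-adjoint --- and that it obtains the minus sign by substituting $-iS$ for $S$ rather than running both signs in parallel via Corollary~\ref{C.2.5}.
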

\begin{proof}
Let $\theta \in \mathbb{R}$. Since $\big((R^{\sharp_A})^{\sharp_A}\big)^{\sharp_A} = R^{\sharp_A}$
for every $R\in\mathbb{B}_{A}(\mathcal{H})$,
we have
\begin{align}\label{I.1.L.2.30}
&\frac{e^{i\theta}\big(((TS)^{\sharp_A} + T^{\sharp_A} S)\big)^{\sharp_A} + \Big(e^{i\theta}\big(((TS)^{\sharp_A} + T^{\sharp_A} S)\big)^{\sharp_A}\Big)^{\sharp_A}}{2} \nonumber
\\& \qquad = \frac{e^{i\theta}\big(T^{\sharp_A}\big)^{\sharp_A} \big(S^{\sharp_A}\big)^{\sharp_A} + e^{i\theta}S^{\sharp_A} \big(T^{\sharp_A}\big)^{\sharp_A}
+ e^{-i\theta} S^{\sharp_A} T^{\sharp_A} + e^{-i\theta} T^{\sharp_A} \big(S^{\sharp_A}\big)^{\sharp_A}}{2} \nonumber
\\& \qquad \qquad = \frac{e^{-i\theta} T^{\sharp_A} + \big(e^{-i\theta}T^{\sharp_A}\big)^{\sharp_A}}{2} \big(S^{\sharp_A}\big)^{\sharp_A}
+ S^{\sharp_A} \frac{e^{-i\theta}T^{\sharp_A} + \big(e^{-i\theta}T^{\sharp_A}\big)^{\sharp_A}}{2}.
\end{align}
Therefore, by Theorem \ref{T.2.30} and (\ref{I.1.L.2.30}), we obtain
\begin{align*}
w_A\Big(\big((TS)^{\sharp_A} &+ T^{\sharp_A} S\big)^{\sharp_A}\Big)
\\& = \displaystyle{\sup_{\theta \in \mathbb{R}}}{\left\|\frac{e^{i\theta}\big(((TS)^{\sharp_A} + T^{\sharp_A} S)\big)^{\sharp_A} + \Big(e^{i\theta}\big(((TS)^{\sharp_A} + T^{\sharp_A} S)\big)^{\sharp_A}\Big)^{\sharp_A}}{2}\right\|}_A
\\&\leq \displaystyle{\sup_{\theta \in \mathbb{R}}}{\left\|\frac{e^{-i\theta} T^{\sharp_A} + \big(e^{-i\theta}T^{\sharp_A}\big)^{\sharp_A}}{2} \big(S^{\sharp_A}\big)^{\sharp_A}
+ S^{\sharp_A} \frac{e^{-i\theta}T^{\sharp_A} + \big(e^{-i\theta}T^{\sharp_A}\big)^{\sharp_A}}{2}\right\|}_A
\\&\leq \displaystyle{\sup_{\theta \in \mathbb{R}}}{\left\|\frac{e^{-i\theta} T^{\sharp_A} + \big(e^{-i\theta}T^{\sharp_A}\big)^{\sharp_A}}{2}\right\|}_A
\Big({\|\big(S^{\sharp_A}\big)^{\sharp_A}\|}_A
+ {\|S^{\sharp_A} \|}_A \Big)
\\& = w_A(T^{\sharp_A})({\|S\|}_A + {\|S\|}_A)
\\& \qquad \qquad \qquad\big(\mbox{since ${\|R^{\sharp_A}\|}_A = {\|R\|}_A$ for every $R\in\mathbb{B}_{A}(\mathcal{H})$}\big)
\\&= 2w_A(T){\|S\|}_A.
\end{align*}
Hence
\begin{align}\label{I.01.L.2.10}
w_A\Big(\big((TS)^{\sharp_A} &+ T^{\sharp_A} S\big)^{\sharp_A}\Big) \leq 2w_A(T){\|S\|}_A.
\end{align}
Since $w_A(R^{\sharp_A}) = w_A(R)$ for every $R\in\mathbb{B}_{A}(\mathcal{H})$, from (\ref{I.01.L.2.10}) we obtain
\begin{align}\label{I.1.L.2.10}
w_A\Big((TS)^{\sharp_A} + T^{\sharp_A} S\Big) \leq 2w_A(T){\|S\|}_A.
\end{align}
Finally, by replacing $S$ by $-iS$ in (\ref{I.1.L.2.10}), we reach that
\begin{align*}
w_A\Big((TS)^{\sharp_A} - T^{\sharp_A} S\Big)\leq 2w_A(T){\|S\|}_A .
\end{align*}
\end{proof}
%%%%%%%%%%%%%%%%%%%%%%
In the next theorem, we give a new upper bound
for the $A$-numerical radius of products of semi-Hilbertian space operators.
%%%%%%%%%%%%%%%%%%%%%%
\begin{theorem}\label{T.2.8}
Let $T, S\in\mathbb{B}_{A}(\mathcal{H})$. Then
\begin{align*}
w_A(TS) \leq w_A(T){\|S\|}_A + \frac{1}{2}w_A\Big((TS)^{\sharp_A} \pm T^{\sharp_A} S\Big) \leq 2w_A(T){\|S\|}_A.
\end{align*}
\end{theorem}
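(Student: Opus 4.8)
The plan is to derive the inequality from Theorem~\ref{T.2.30} (the characterization of $w_A$ via $\sup_\theta \|(e^{i\theta}T + (e^{i\theta}T)^{\sharp_A})/2\|_A$) together with Lemma~\ref{L.2.10}, which already supplies the right-hand inequality $w_A((TS)^{\sharp_A} \pm T^{\sharp_A}S) \le 2w_A(T)\|S\|_A$. Indeed, once the left-hand inequality $w_A(TS) \le w_A(T)\|S\|_A + \tfrac12 w_A((TS)^{\sharp_A} \pm T^{\sharp_A}S)$ is established, the chain
\begin{align*}
w_A(T)\|S\|_A + \tfrac12 w_A\big((TS)^{\sharp_A} \pm T^{\sharp_A}S\big)
\le w_A(T)\|S\|_A + \tfrac12\cdot 2w_A(T)\|S\|_A = 2w_A(T)\|S\|_A
\end{align*}
closes the argument, invoking Lemma~\ref{L.2.10}. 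So the whole proof reduces to the first inequality.

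For the first inequality, I would fix $\theta \in \mathbb{R}$ and write out the $A$-selfadjoint "real part'' of $e^{i\theta}TS$. The key algebraic identity to aim for is a decomposition of
\[
\frac{e^{i\theta}TS + (e^{i\theta}TS)^{\sharp_A}}{2}
\]
into a sum of two pieces: one of the form $\big(\tfrac{e^{i\theta}T + (e^{i\theta}T)^{\sharp_A}}{2}\big)(\text{something involving }S)$, which is controlled in $A$-seminorm by $\|\tfrac{e^{i\theta}T + (e^{i\theta}T)^{\sharp_A}}{2}\|_A \cdot \|S\|_A \le w_A(T)\|S\|_A$ (using Theorem~\ref{T.2.30} and sub-multiplicativity $\|RS\|_A \le \|R\|_A\|S\|_A$), plus a remainder piece that, after recognizing it as the "real part'' of $e^{i\theta}\big((TS)^{\sharp_A} \pm T^{\sharp_A}S\big)$ up to a factor, is bounded by $\tfrac12 w_A((TS)^{\sharp_A} \pm T^{\sharp_A}S)$ via Theorem~\ref{T.2.30} again. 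The mechanism mirrors the manipulation in equation~(\ref{I.1.L.2.30}) inside the proof of Lemma~\ref{L.2.10}: one uses $(TS)^{\sharp_A} = S^{\sharp_A}T^{\sharp_A}$, $\big((R^{\sharp_A})^{\sharp_A}\big)^{\sharp_A} = R^{\sharp_A}$, and linearity of $R \mapsto R^{\sharp_A}$ (conjugate-linearity absorbed by the $e^{\pm i\theta}$). After taking $\sup_\theta$ of the triangle-inequality estimate and applying Theorem~\ref{T.2.30} on the left to get $w_A(TS)$, the bound falls out; the $\pm$ sign is handled uniformly, or else one treats $+$ first and substitutes $S \mapsto \mp iS$ (or $T \mapsto \pm iT$) to get the other, exactly as at the end of Lemma~\ref{L.2.10}.

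The main obstacle is pinning down the correct splitting identity: one must choose the decomposition so that the "cross terms'' involving $T^{\sharp_A}S$ and $S^{\sharp_A}T$ (and their $\sharp_A$-images under conjugation by $e^{i\theta}$) assemble precisely into $e^{i\theta}$ times the real part of $(TS)^{\sharp_A} \pm T^{\sharp_A}S$, with the leftover assembling into $\big(\tfrac{e^{i\theta}T + (e^{i\theta}T)^{\sharp_A}}{2}\big)S$ (or a $\sharp_A$-conjugate thereof with matching seminorm). Getting the bookkeeping of $e^{i\theta}$ versus $e^{-i\theta}$ factors right — and making sure each final seminorm is one to which Theorem~\ref{T.2.30} or the sub-multiplicativity bound genuinely applies — is the delicate point; everything else is routine application of the $\sharp_A$-algebra identities and seminorm facts collected in the introduction.
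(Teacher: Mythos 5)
Your overall skeleton matches the paper: the second inequality is exactly Lemma \ref{L.2.10}, the proof reduces to the first inequality, the estimate is obtained for each fixed $\theta$ and then one takes the supremum and applies Theorem \ref{T.2.30} to both sides, and the other sign is recovered by replacing $S$ with $-iS$. The splitting identity you are looking for is
\begin{align*}
\frac{e^{i\theta}TS + (e^{i\theta}TS)^{\sharp_A}}{2}
= \frac{e^{i\theta}T + (e^{i\theta}T)^{\sharp_A}}{2}\,S
+ e^{-i\theta}\,\frac{S^{\sharp_A}T^{\sharp_A} - T^{\sharp_A}S}{2},
\end{align*}
which follows from $(e^{i\theta}TS)^{\sharp_A} = e^{-i\theta}S^{\sharp_A}T^{\sharp_A}$ by adding and subtracting $e^{-i\theta}T^{\sharp_A}S/2$.

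However, the mechanism you propose for controlling the remainder does not work as stated, and this is a genuine gap. The remainder $e^{-i\theta}\frac{(TS)^{\sharp_A} - T^{\sharp_A}S}{2}$ is just a unimodular multiple of $\frac{1}{2}\big((TS)^{\sharp_A} - T^{\sharp_A}S\big)$; it is \emph{not} of the form $\frac{e^{i\varphi}R + (e^{i\varphi}R)^{\sharp_A}}{2}$ with $R = (TS)^{\sharp_A} - T^{\sharp_A}S$ (that would force $R^{\sharp_A}$ to be a scalar multiple of $R$), so you cannot ``recognize it as a real part'' and invoke Theorem \ref{T.2.30} to bound its $A$-seminorm by $\frac{1}{2}w_A\big((TS)^{\sharp_A} - T^{\sharp_A}S\big)$. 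And if you instead apply the triangle inequality for ${\|\cdot\|}_A$ to the decomposition, the remainder contributes $\frac{1}{2}{\|(TS)^{\sharp_A} - T^{\sharp_A}S\|}_A$, which dominates rather than is dominated by $\frac{1}{2}w_A\big((TS)^{\sharp_A} - T^{\sharp_A}S\big)$, so the stated bound does not follow. The paper's fix is to switch from the seminorm to the numerical radius \emph{before} splitting: Lemma \ref{l.2.1.5}, applied to $TS$, gives ${\big\|\frac{e^{i\theta}TS + (e^{i\theta}TS)^{\sharp_A}}{2}\big\|}_A = w_A\big(\frac{e^{i\theta}TS + (e^{i\theta}TS)^{\sharp_A}}{2}\big)$, and one then uses subadditivity of $w_A$ on the decomposition. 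The remainder term is then handled exactly by the unimodular invariance $w_A(e^{-i\theta}X) = w_A(X)$, yielding $\frac{1}{2}w_A\big((TS)^{\sharp_A} - T^{\sharp_A}S\big)$, while the product term is bounded by $w_A \leq {\|\cdot\|}_A$ (Corollary \ref{C.2.6}), submultiplicativity of ${\|\cdot\|}_A$, and Theorem \ref{T.2.30}. With that single adjustment your outline becomes the paper's proof.
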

\begin{proof}
The second inequality follows from Lemma \ref{L.2.10}.
It is therefore enough to prove the first inequality.
Let $\theta \in \mathbb{R}$. By Lemma \ref{l.2.1.5} we have
\begin{align*}
&{\left\|\frac{e^{i\theta}TS + (e^{i\theta}TS)^{\sharp_A}}{2}\right\|}_A
= w_A\left(\frac{e^{i\theta}TS + (e^{i\theta}TS)^{\sharp_A}}{2}\right)
\\& \qquad = w_A\left(\frac{e^{i\theta}T + (e^{i\theta}T)^{\sharp_A}}{2} S
+ e^{-i\theta}\frac{S^{\sharp_A} T^{\sharp_A} - T^{\sharp_A} S}{2}\right)
\\& \qquad \leq w_A\left(\frac{e^{i\theta}T + (e^{i\theta}T)^{\sharp_A}}{2} S\right)
+ w_A\left(e^{-i\theta}\frac{S^{\sharp_A} T^{\sharp_A} - T^{\sharp_A} S}{2}\right)
\\& \qquad \leq {\left\|\frac{e^{i\theta}T + (e^{i\theta}T)^{\sharp_A}}{2} S\right\|}_A
+ \frac{1}{2}w_A\Big(S^{\sharp_A} T^{\sharp_A} - T^{\sharp_A} S\Big)
\qquad\big(\mbox{by Corollary \ref{C.2.6}}\big)
\\& \qquad \leq {\left\|\frac{e^{i\theta}T + (e^{i\theta}T)^{\sharp_A}}{2}\right\|}_A {\|S\|}_A
+ \frac{1}{2}w_A\Big((TS)^{\sharp_A} - T^{\sharp_A} S\Big)
\\& \qquad \leq w_A(T){\|S\|}_A + \frac{1}{2}w_A\Big((TS)^{\sharp_A} - T^{\sharp_A} S\Big)
\qquad \qquad\big(\mbox{by Theorem \ref{T.2.30}}\big).
\end{align*}
Thus
\begin{align}\label{I.1.T.2.8}
w_A(TS) = \displaystyle{\sup_{\theta \in \mathbb{R}}}{\left\|\frac{e^{i\theta}TS + (e^{i\theta}TS)^{\sharp_A}}{2}\right\|}_A
\leq w_A(T){\|S\|}_A + \frac{1}{2}w_A\Big((TS)^{\sharp_A} - T^{\sharp_A} S\Big).
\end{align}
Now, by replacing $S$ by $-iS$ in (\ref{I.1.T.2.8}), we conclude that
\begin{align}\label{I.2.T.2.8}
w_A(TS) \leq w_A(T){\|S\|}_A + \frac{1}{2}w_A\Big((TS)^{\sharp_A} + T^{\sharp_A} S\Big).
\end{align}
By (\ref{I.1.T.2.8}) and (\ref{I.2.T.2.8}) we deduce the desired result.
\end{proof}
%%%%%%%%%%%%%%%%%%%%%%%%%%%%
As an immediate consequence of the preceding theorem, we have the following result.
%%%%%%%%%%%%%%%%%%%%%%%%%%%%%%
\begin{corollary}\label{C.2.9}
Let $T, S\in\mathbb{B}_{A}(\mathcal{H})$.
If $(TS)^{\sharp_A} = T^{\sharp_A} S$, then
\begin{align*}
w_A(TS) \leq w_A(T){\|S\|}_A.
\end{align*}
\end{corollary}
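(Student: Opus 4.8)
The plan is to read Corollary \ref{C.2.9} as a direct specialization of Theorem \ref{T.2.8}. Recall that the first inequality in Theorem \ref{T.2.8} asserts, for any $T, S \in \mathbb{B}_A(\mathcal{H})$,
\begin{align*}
w_A(TS) \leq w_A(T){\|S\|}_A + \frac{1}{2}w_A\Big((TS)^{\sharp_A} - T^{\sharp_A} S\Big),
\end{align*}
where the $\pm$ in the statement of that theorem records that the bound holds for either sign; here I would simply select the minus sign.

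The key observation is that the hypothesis $(TS)^{\sharp_A} = T^{\sharp_A} S$ forces the operator $(TS)^{\sharp_A} - T^{\sharp_A} S$ to be the zero operator of $\mathbb{B}_A(\mathcal{H})$. Since $w_A(0) = 0$ (the $A$-numerical radius is, after all, a supremum of nonnegative quantities, each of which is zero for the zero operator), the correction term $\frac{1}{2}w_A\big((TS)^{\sharp_A} - T^{\sharp_A} S\big)$ vanishes. Substituting this into the displayed inequality yields
\begin{align*}
w_A(TS) \leq w_A(T){\|S\|}_A,
\end{align*}
which is exactly the assertion of the corollary.

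There is essentially no obstacle to overcome: the entire content has been front-loaded into Theorem \ref{T.2.8}, and Corollary \ref{C.2.9} is the degenerate case in which the commutator-type defect term $(TS)^{\sharp_A} - T^{\sharp_A} S$ is null. The only mild point worth a sentence is to justify why $(TS)^{\sharp_A} = T^{\sharp_A} S$ gives $w_A$ of the difference equal to zero, but this is immediate from the definition of $w_A$. I would therefore present the proof in one or two lines, simply citing Theorem \ref{T.2.8} with the minus sign and noting the vanishing of the defect term.
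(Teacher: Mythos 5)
Your proposal is correct and is exactly the argument the paper intends: Corollary \ref{C.2.9} is stated as an immediate consequence of Theorem \ref{T.2.8}, obtained by taking the minus sign and observing that the hypothesis $(TS)^{\sharp_A} = T^{\sharp_A}S$ makes the defect term $\frac{1}{2}w_A\big((TS)^{\sharp_A} - T^{\sharp_A}S\big)$ vanish. No further comment is needed.
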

%%%%%%%%%%%%%%%%%%%%%%
In the following, for $R\in\mathbb{B}_{A}(\mathcal{H})$, let $d_A(R)$ denote the $A$-numerical radius distance
of $R$ from the scalar operators, that is,
\begin{align*}
d_A(R) = \inf\big\{w_A(R + \zeta I): \,\, \zeta \in \mathbb{C}\big\}.
\end{align*}
%%%%%%%%%%%%%%%%%%%%%%%
Next, we present a improvement of the second inequality in (\ref{I.10.1}).
%%%%%%%%%%%%%%%%%%%%%%
\begin{theorem}\label{T.2.11}
Let $T, S\in\mathbb{B}_{A}(\mathcal{H})$. Then
\begin{align*}
w_A(TS ) &\leq {\|TS\|}_A
\\& \leq \min\Big\{{\|T\|}_A\big(w_A(S) + d_A(S)\big), {\|S\|}_A\big(w_A(T) + d_A(T)\big)\Big\}
\\& \leq 2\min\Big\{{\|T\|}_A w_A(S), {\|S\|}_A w_A(T)\Big\}.
\end{align*}
\end{theorem}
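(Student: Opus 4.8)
The plan is to establish the three inequalities of Theorem~\ref{T.2.11} one at a time, with the middle inequality being the substantive one. The first inequality, $w_A(TS) \leq {\|TS\|}_A$, is immediate from Corollary~\ref{C.2.6}. The last inequality follows by observing that $d_A(S) \leq w_A(S)$ (take $\zeta = 0$ in the infimum defining $d_A$) and likewise $d_A(T) \leq w_A(T)$, so each term in the $\min$ of the second line is bounded by $2{\|T\|}_A w_A(S)$ respectively $2{\|S\|}_A w_A(T)$.

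For the middle inequality, by symmetry it suffices to prove ${\|TS\|}_A \leq {\|S\|}_A\big(w_A(T) + d_A(T)\big)$; the other bound ${\|TS\|}_A \leq {\|T\|}_A\big(w_A(S) + d_A(S)\big)$ follows by applying the first bound to $(TS)^{\sharp_A} = S^{\sharp_A}T^{\sharp_A}$, using ${\|(TS)^{\sharp_A}\|}_A = {\|TS\|}_A$, ${\|S^{\sharp_A}\|}_A = {\|S\|}_A$, $w_A(T^{\sharp_A}) = w_A(T)$, and $d_A(T^{\sharp_A}) = d_A(T)$ (the last since $(T + \zeta I)^{\sharp_A} = T^{\sharp_A} + \overline{\zeta}P$ and one checks $w_A$ is unchanged under the relevant modifications). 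The key step is then: for any scalar $\zeta \in \mathbb{C}$, write $T = (T + \zeta I) - \zeta I$, so $TS = (T + \zeta I)S - \zeta S$, and hence
\begin{align*}
{\|TS\|}_A \leq {\|(T + \zeta I)S\|}_A + |\zeta|{\|S\|}_A \leq \big({\|T + \zeta I\|}_A + |\zeta|\big){\|S\|}_A.
\end{align*}
This is not quite what I want; instead I expect the right route is to bound ${\|(T+\zeta I)S\|}_A$ more cleverly. The natural tool is Theorem~\ref{T.2.30}: for each $\theta$, $\big\|\tfrac{e^{i\theta}TS + (e^{i\theta}TS)^{\sharp_A}}{2}\big\|_A$ decomposes as in the proof of Theorem~\ref{T.2.8} into $\big\|\tfrac{e^{i\theta}T + (e^{i\theta}T)^{\sharp_A}}{2}S\big\|_A$ plus a commutator-type term; shifting $T \to T + \zeta I$ leaves $TS$ changed only by $\zeta S$, and optimizing over $\zeta$ converts $w_A(T)$ into $d_A(T)$ in the appropriate slot while keeping $w_A(T)$ in another. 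I would follow the computation in the proof of Theorem~\ref{T.2.8} verbatim but starting from $T + \zeta I$ in place of $T$ in one of the two occurrences, track how the scalar shift propagates, and take the infimum over $\zeta$ at the end.

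The main obstacle I anticipate is bookkeeping the scalar shifts correctly: $(T+\zeta I)^{\sharp_A} = T^{\sharp_A} + \overline{\zeta}P$ rather than $T^{\sharp_A} + \overline{\zeta}I$, so the cancellations that make $(T(S+\zeta I))^{\sharp_A}$ relate to $T^{\sharp_A}(S + \zeta I)$ are slightly more delicate than in the unprojected Hilbert space case, and I must ensure that the factor picking up $d_A$ genuinely sees $w_A(T + \zeta I)$ and not something larger. Once that is handled, taking $\sup_\theta$ and then $\inf_\zeta$ (justified since the bound holds pointwise in $\zeta$) yields ${\|TS\|}_A \leq {\|S\|}_A w_A(T) + {\|S\|}_A d_A(T)$, and combining with the symmetric bound and the easy first and last inequalities completes the proof.
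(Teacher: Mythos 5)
The easy parts of your argument are fine: $w_A(TS)\leq{\|TS\|}_A$, the reduction of the last inequality to $d_A\leq w_A$, and the symmetry reduction via $(TS)^{\sharp_A}=S^{\sharp_A}T^{\sharp_A}$ together with $d_A(T^{\sharp_A})=d_A(T)$ are all correct. But the substantive middle inequality is not actually proved. You abandon your first attempt (rightly, since ${\|T+\zeta I\|}_A+|\zeta|$ does not reduce to $w_A(T)+d_A(T)$), and the replacement plan --- running the proof of Theorem \ref{T.2.8} with $T+\zeta I$ inserted --- cannot work as described: that machinery estimates $\sup_\theta\big\|\tfrac{e^{i\theta}TS+(e^{i\theta}TS)^{\sharp_A}}{2}\big\|_A$, which by Theorem \ref{T.2.30} equals $w_A(TS)$, not ${\|TS\|}_A$. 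Since the theorem asserts a bound on the operator seminorm ${\|TS\|}_A$ itself (which can exceed $w_A(TS)$ by a factor up to $2$), any route through Theorem \ref{T.2.30} applied to $TS$ proves a strictly weaker statement. Moreover the commutator term $\tfrac12 w_A\big((TS)^{\sharp_A}\pm T^{\sharp_A}S\big)$ that the Theorem \ref{T.2.8} decomposition produces has no visible relation to $d_A(T){\|S\|}_A$; "optimizing over $\zeta$ converts $w_A(T)$ into $d_A(T)$ in the appropriate slot" is a hope, not an argument.

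The idea you are missing is much more elementary and bypasses $\theta$ entirely. Choose $\zeta_0$ with $d_A(T)=w_A(T+\zeta_0 I)$ (the case $\zeta_0=0$ being trivial from $\tfrac12{\|T\|}_A\leq w_A(T)$), and set $\zeta=\overline{\zeta_0}/|\zeta_0|$, so that $\zeta\zeta_0=|\zeta_0|$ is \emph{real}. Start from ${\|TS\|}_A\leq{\|T\|}_A{\|S\|}_A={\|\zeta T\|}_A{\|S\|}_A$ and split $\zeta T$ into its $A$-real and $A$-imaginary parts. The point of the choice of $\zeta$ is that adding the real scalar $\zeta\zeta_0$ does not change the $A$-imaginary part (up to a term annihilated by the seminorm, since $(\lambda I)^{\sharp_A}=\overline{\lambda}P$), so
\begin{align*}
{\left\|\frac{\zeta T-(\zeta T)^{\sharp_A}}{2i}\right\|}_A
={\left\|\frac{\zeta(T+\zeta_0 I)-\big(\zeta(T+\zeta_0 I)\big)^{\sharp_A}}{2i}\right\|}_A
\leq w_A\big(\zeta(T+\zeta_0 I)\big)=d_A(T)
\end{align*}
by Corollary \ref{C.2.5}, while the $A$-real part is bounded by $w_A(\zeta T)=w_A(T)$. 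Adding the two pieces gives ${\|TS\|}_A\leq\big(w_A(T)+d_A(T)\big){\|S\|}_A$. Without this cancellation --- which your proposal never identifies --- the proof does not close.
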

\begin{proof}
Using a compactness argument, let $\zeta_0 \in \mathbb{C}$ such that $d_A(T) = w_A(T + \zeta_0I)$.
If $\zeta_0 = 0$, then by the inequalities of (\ref{I.1.C.2.6}) we get
\begin{align}\label{I.1.T.2.11}
w_A(TS ) \leq {\|TS\|}_A \leq 2{\|S\|}_A w_A(T) = {\|S\|}_A\big(w_A(T) + d_A(T)\big).
\end{align}
Hence, we may assume that $\zeta_0 \neq 0$. Put $\zeta = \frac{\overline{\zeta_0}}{|\zeta_0|}$. Then
\begin{align*}
w_A(TS) &\leq {\|TS\|}_A \leq {\|T\|}_A{\|S\|}_A
\\& = {\left\|\frac{(\zeta T) + (\zeta T)^{\sharp_A}}{2} + i\frac{(\zeta T) - (\zeta T)^{\sharp_A}}{2i}\right\|}_A{\|S\|}_A
\\& \leq \left({\left\|\frac{(\zeta T) + (\zeta T)^{\sharp_A}}{2}\right\|}_A
+ {\left\|\frac{(\zeta T) - (\zeta T)^{\sharp_A}}{2i}\right\|}_A\right){\|S\|}_A
\\& = \left({\left\|\frac{(\zeta T) + (\zeta T)^{\sharp_A}}{2}\right\|}_A
+ {\left\|\frac{\big(\zeta (T + \zeta_0I)\big) - \big(\zeta (T + \zeta_0I)\big)^{\sharp_A}}{2i}\right\|}_A\right){\|S\|}_A
\\& \leq \Big(w_A(\zeta T) + w_A\big(\zeta (T + \zeta_0I)\big)\Big){\|S\|}_A \qquad \qquad\big(\mbox{by Corollary \ref{C.2.5}}\big)
\\& = \big(w_A(T) + d_A(T)\big){\|S\|}_A
\end{align*}
Hence
\begin{align}\label{I.2.T.2.11}
w_A(TS) \leq {\|TS\|}_A \leq \big(w_A(T) + d_A(T)\big) {\|S\|}_A.
\end{align}
Since $d_A(T) \leq w_A(T)$, from (\ref{I.1.T.2.11}) and (\ref{I.2.T.2.11}) it follows that
\begin{align}\label{I.3.T.2.11}
w_A(TS )\leq {\|TS\|}_A \leq {\|S\|}_A\big(w_A(T) + d_A(T)\big) \leq 2w_A(T){\|S\|}_A.
\end{align}
By a similar argument we have
\begin{align}\label{I.4.T.2.11}
w_A(TS )\leq {\|TS\|}_A \leq {\|T\|}_A\big(w_A(S) + d_A(S)\big) \leq 2w_A(S){\|T\|}_A.
\end{align}
Now, by (\ref{I.3.T.2.11}) and (\ref{I.4.T.2.11}) we obtain the desired inequalities.
\end{proof}
%%%%%%%%%%%%%%%%%%%%%%%%%%%%%%
We finish this section by another upper bound
for the $A$-numerical radius of products of semi-Hilbertian space operators.
%%%%%%%%%%%%%%%%%%%%%%%%%%%%%%
\begin{theorem}\label{T.2.12}
Let $T, S\in\mathbb{B}_{A}(\mathcal{H})$. Then
\begin{align*}
w_A(TS ) \leq {\|TS\|}_A \leq \big(w_A(T) + d_A(T)\big)\big(w_A(S) + d_A(S)\big)\leq 4w_A(T)w_A(S).
\end{align*}
\end{theorem}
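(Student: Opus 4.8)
The plan is to combine the factored bound from Theorem~\ref{T.2.11} with the defining optimization of the $A$-numerical radius distance $d_A(\cdot)$, and then estimate the product of the two factors by a routine application of Corollary~\ref{C.2.6}. The first inequality $w_A(TS)\leq {\|TS\|}_A$ is immediate from the definitions, and the last inequality $\big(w_A(T)+d_A(T)\big)\big(w_A(S)+d_A(S)\big)\leq 4w_A(T)w_A(S)$ follows at once from the elementary fact $d_A(R)\leq w_A(R)$ for every $R\in\mathbb{B}_A(\mathcal{H})$ (take $\zeta=0$ in the infimum defining $d_A$). So the heart of the matter is the middle inequality.

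To prove ${\|TS\|}_A\leq \big(w_A(T)+d_A(T)\big)\big(w_A(S)+d_A(S)\big)$, first I would use a compactness argument exactly as in the proof of Theorem~\ref{T.2.11} to fix $\zeta_0,\eta_0\in\mathbb{C}$ with $d_A(T)=w_A(T+\zeta_0 I)$ and $d_A(S)=w_A(S+\eta_0 I)$. Then I would write ${\|TS\|}_A\leq {\|T\|}_A{\|S\|}_A$ and decompose each factor into its $A$-real and $A$-imaginary parts after rotating by a suitable unimodular scalar, as was done for $T$ alone in Theorem~\ref{T.2.11}. Concretely, set $\zeta=\overline{\zeta_0}/|\zeta_0|$ (or $\zeta=1$ if $\zeta_0=0$) and $\eta=\overline{\eta_0}/|\eta_0|$ (or $\eta=1$ if $\eta_0=0$), and use Corollary~\ref{C.2.5} together with the identity
\begin{align*}
{\left\|\frac{(\zeta T)-(\zeta T)^{\sharp_A}}{2i}\right\|}_A
= {\left\|\frac{\big(\zeta(T+\zeta_0 I)\big)-\big(\zeta(T+\zeta_0 I)\big)^{\sharp_A}}{2i}\right\|}_A
\end{align*}
(valid since the scalar $\zeta\zeta_0 I$ is $A$-selfadjoint up to the sign cancellation in the difference, so it drops out of the skew part) to obtain ${\|T\|}_A\leq w_A(\zeta T)+w_A\big(\zeta(T+\zeta_0 I)\big)=w_A(T)+d_A(T)$, and symmetrically ${\|S\|}_A\leq w_A(S)+d_A(S)$. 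Multiplying these two bounds gives the middle inequality.

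The case split on whether $\zeta_0=0$ (and separately $\eta_0=0$) is only a minor bookkeeping nuisance: when $\zeta_0=0$ one simply uses ${\|T\|}_A\leq 2w_A(T)=w_A(T)+d_A(T)$ directly from Corollary~\ref{C.2.6}, and likewise for $S$. The step I expect to require the most care is verifying that the rotation scalar $\zeta$ can be chosen so that the skew part ${\left\|\tfrac{(\zeta T)-(\zeta T)^{\sharp_A}}{2i}\right\|}_A$ coincides with the corresponding skew part of $\zeta(T+\zeta_0 I)$ — i.e., checking that $\zeta\zeta_0\in\mathbb{R}$ so that adding $\zeta\zeta_0 I$ does not change the $A$-skew-Hermitian component; this is exactly why one takes $\zeta=\overline{\zeta_0}/|\zeta_0|$, giving $\zeta\zeta_0=|\zeta_0|>0$. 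Once that is in place, the rest is just the triangle inequality, Corollary~\ref{C.2.5}, and Corollary~\ref{C.2.6}, so no genuine obstacle remains; the argument is essentially a symmetrized version of the proof of Theorem~\ref{T.2.11}.
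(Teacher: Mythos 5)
Your proposal is correct and follows essentially the same route as the paper: fix $\zeta_0,\xi_0$ attaining $d_A(T)$ and $d_A(S)$, rotate by $\zeta=\overline{\zeta_0}/|\zeta_0|$ and $\xi=\overline{\xi_0}/|\xi_0|$ so that the scalar drops out of the $A$-skew part, and bound each factor of ${\|T\|}_A{\|S\|}_A$ via Corollary~\ref{C.2.5} by $w_A(T)+d_A(T)$ and $w_A(S)+d_A(S)$ respectively, with the $\zeta_0=0$ (or $\xi_0=0$) case handled by Corollary~\ref{C.2.6}. This matches the paper's argument step for step.
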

\begin{proof}
The fact that $d_A(R) \leq w_A(R)$ holds for every $R\in\mathbb{B}_{A}(\mathcal{H})$ implies that
the third desired inequality.

Now, let $\zeta_0, \xi_0 \in \mathbb{C}$ such that $d_A(T) = w_A(T + \zeta_0I)$ and $d_A(S) = w_A(S + \xi_0I)$.
As in the proof of Theorem \ref{T.2.11} we may assume that $\zeta_0 \xi_0 \neq 0$.
Put $\zeta = \frac{\overline{\zeta_0}}{|\zeta_0|}$ and $\xi = \frac{\overline{\xi_0}}{|\xi_0|}$.
Therefore, we have
\begin{align*}
w_A(TS) &\leq {\|TS\|}_A
\\&\leq {\|T\|}_A{\|S\|}_A
\\& = {\left\|\frac{(\zeta T) + (\zeta T)^{\sharp_A}}{2} + i\frac{(\zeta T) - (\zeta T)^{\sharp_A}}{2i}\right\|}_A
\\& \qquad \qquad \qquad\times{\left\|\frac{(\xi S) + (\xi S)^{\sharp_A}}{2} + i\frac{(\xi S) - (\xi S)^{\sharp_A}}{2i}\right\|}_A
\\& \leq \left({\left\|\frac{(\zeta T) + (\zeta T)^{\sharp_A}}{2}\right\|}_A + {\left\|\frac{(\zeta T) - (\zeta T)^{\sharp_A}}{2i}\right\|}_A\right)
\\& \qquad \qquad \qquad\times\left({\left\|\frac{(\xi S) + (\xi S)^{\sharp_A}}{2}\right\|}_A + {\left\|\frac{(\xi S) - (\xi S)^{\sharp_A}}{2i}\right\|}_A\right)
\\& = \left({\left\|\frac{(\zeta T) + (\zeta T)^{\sharp_A}}{2}\right\|}_A
+ {\left\|\frac{\big(\zeta (T + \zeta_0I)\big) - \big(\zeta (T + \zeta_0I)\big)^{\sharp_A}}{2i}\right\|}_A\right)
\\& \qquad \qquad\times\left({\left\|\frac{(\xi S) + (\xi S)^{\sharp_A}}{2}\right\|}_A
+ {\left\|\frac{\big(\xi (S + \xi_0I)\big) - \big(\xi (S + \xi_0I)\big)^{\sharp_A}}{2i}\right\|}_A\right)
\\& \leq \Big(w_A(\zeta T) + w_A\big(\zeta (T + \zeta_0I)\big)\Big)\Big(w_A(\xi S) + w_A\big(\xi (S + \xi_0I)\big)\Big)
\\& \qquad \qquad \qquad \qquad \qquad \qquad \qquad \qquad \qquad \qquad \qquad\big(\mbox{by Corollary \ref{C.2.5}}\big)
\\& = \big(w_A(T) + d_A(T)\big)\big(w_A(S) + d_A(S)\big).
\end{align*}
\end{proof}
%%%%%%%%%%%%%%%%%%%%%%%%%%%%%%%%%%%%%
%%%%%%%%%%%%%%%%%%%%%%%%%%%%%%%%%%%%%
%%%%%%%%%%%%%%%%%%%%%%%%%%%%%%%%%%%%%
\section{Upper bounds for the $A$-numerical radius of commutators, and anticommutators of operators}
%%%%%%%%%%%%%%%%%%%%%%%%%%%%%
In this section, we present some upper bounds for the $A$-numerical radius of commutators,
and anticommutators of semi-Hilbertian space operators.
To achieve the first main result in this section, we need the following lemma.
%%%%%%%%%%%%%%%%%%%%%%%%%%%%%%
\begin{lemma}\label{l.2.13.9}
Let $R\in\mathbb{B}_{A}(\mathcal{H})$. Then
\begin{align*}
{\big\|R^{\sharp_A} R + RR^{\sharp_A}\big\|}_A \leq 2\big(w^2_A(R) +d^2_A(R)\big)\leq 4w^2_A(R).
\end{align*}
\end{lemma}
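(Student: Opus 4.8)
The plan is to exploit the characterization $w_A(R) = \sup_{\theta\in\mathbb{R}}\big\|\tfrac{e^{i\theta}R + (e^{i\theta}R)^{\sharp_A}}{2}\big\|_A$ from Theorem~\ref{T.2.30} together with the trick, already used in the proof of Theorem~\ref{T.2.11}, of rotating $R$ by a unimodular scalar chosen so that the imaginary part of the rotated operator realizes $d_A(R)$. First I would fix, using a compactness argument, a scalar $\zeta_0\in\mathbb{C}$ with $d_A(R) = w_A(R+\zeta_0 I)$; if $\zeta_0=0$ one falls back on Corollary~\ref{C.2.6}, so assume $\zeta_0\neq 0$ and set $\zeta = \overline{\zeta_0}/|\zeta_0|$, a unimodular scalar. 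Write $M := \tfrac{(\zeta R)+(\zeta R)^{\sharp_A}}{2}$ and $N := \tfrac{(\zeta R)-(\zeta R)^{\sharp_A}}{2i}$, so that $\zeta R = M + iN$ with $M, N$ both $A$-selfadjoint, and observe that $N = \tfrac{\big(\zeta(R+\zeta_0 I)\big) - \big(\zeta(R+\zeta_0 I)\big)^{\sharp_A}}{2i}$, whence by Corollary~\ref{C.2.5} one gets $\|N\|_A \le w_A\big(\zeta(R+\zeta_0 I)\big) = w_A(R+\zeta_0 I) = d_A(R)$, while $\|M\|_A \le w_A(\zeta R) = w_A(R)$.

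Next I would compute $R^{\sharp_A}R + RR^{\sharp_A}$ in terms of $M$ and $N$. Since $|\zeta|=1$ we have $(\zeta R)^{\sharp_A}(\zeta R) + (\zeta R)(\zeta R)^{\sharp_A} = R^{\sharp_A}R + RR^{\sharp_A}$ (the unimodular factors cancel), and expanding $\zeta R = M+iN$ gives
\begin{align*}
(\zeta R)^{\sharp_A}(\zeta R) + (\zeta R)(\zeta R)^{\sharp_A}
&= (M-iN)(M+iN) + (M+iN)(M-iN) \\
&= 2M^2 + 2N^2,
\end{align*}
using $M^{\sharp_A}=M$, $N^{\sharp_A}=N$ so that the cross terms $\pm i(MN - NM)$ cancel pairwise. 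Hence ${\|R^{\sharp_A}R + RR^{\sharp_A}\|}_A = 2{\|M^2 + N^2\|}_A$.

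It remains to bound ${\|M^2+N^2\|}_A$. Here I would use that $M^2+N^2$ is $A$-positive (it is $A$-selfadjoint as a sum of squares of $A$-selfadjoints with ranges controlled appropriately, or more directly one tests it on vectors), so for any $x\in\mathcal{H}$ with $\|x\|_A = 1$,
\begin{align*}
{\langle (M^2+N^2)x, x\rangle}_A = {\|Mx\|}^2_A + {\|Nx\|}^2_A \le {\|M\|}^2_A + {\|N\|}^2_A \le w^2_A(R) + d^2_A(R),
\end{align*}
and taking the supremum over such $x$ yields ${\|M^2+N^2\|}_A \le w^2_A(R) + d^2_A(R)$. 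Combining, ${\|R^{\sharp_A}R + RR^{\sharp_A}\|}_A = 2{\|M^2+N^2\|}_A \le 2\big(w^2_A(R) + d^2_A(R)\big)$, and the final inequality $2\big(w^2_A(R)+d^2_A(R)\big) \le 4w^2_A(R)$ is immediate from $d_A(R)\le w_A(R)$. The one point needing a little care — the main obstacle — is justifying that $\langle (M^2+N^2)x,x\rangle_A = \|Mx\|_A^2 + \|Nx\|_A^2$, i.e., that $\langle M^2 x,x\rangle_A = \langle Mx, Mx\rangle_A$; this uses $AM = M^*A$ (the defining relation $AM^{\sharp_A}=M^*A$ together with $M^{\sharp_A}=M$), exactly as in the proof of Theorem~\ref{T.2.7}, and likewise for $N$.
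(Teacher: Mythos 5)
Your proposal is correct and follows essentially the same route as the paper: the same rotated Cartesian decomposition $\zeta R = M + iN$ with $\zeta = \overline{\zeta_0}/|\zeta_0|$, the identity $R^{\sharp_A}R + RR^{\sharp_A} = 2M^2 + 2N^2$, and Corollary \ref{C.2.5} giving $\|M\|_A \le w_A(R)$, $\|N\|_A \le d_A(R)$. The only (immaterial) difference is that you bound ${\|M^2+N^2\|}_A \le {\|M\|}_A^2 + {\|N\|}_A^2$ via $A$-positivity and the quadratic form, whereas the paper gets the same bound more directly from the triangle inequality and ${\|M^2\|}_A \le {\|M\|}_A^2$.
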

\begin{proof}
Observe that, from $d_A(R)\leq w_A(R)$ we have
$2\big(w^2_A(R) +d^2_A(R)\big)\leq 4w^2_A(R)$.
It is therefore enough to prove the first inequality.
Let $\zeta_0 \in \mathbb{C}$ such that $d_A(R) = w_A(R + \zeta_0I)$.
If $\zeta_0 = 0$, then by employing Corollary \ref{C.2.5} we have
\begin{align*}
{\big\|R^{\sharp_A} R + RR^{\sharp_A}\big\|}_A &
= {\left\|2 \left(\frac{R + R^{\sharp_A}}{2}\right)^2 + 2 \left(\frac{R - R^{\sharp_A}}{2i}\right)^2\right\|}_A
\\& \leq 2{\left\|\frac{R + R^{\sharp_A}}{2}\right\|}^2_A + 2{\left\|\frac{R - R^{\sharp_A}}{2i}\right\|}^2_A
\\& \leq 2w^2_A(R) + 2w^2_A(R) = 2\big(w^2_A(R) +d^2_A(R)\big).
\end{align*}
If $\zeta_0 \neq 0$, then put $\zeta = \frac{\overline{\zeta_0}}{|\zeta_0|}$.
A simple computation together with Corollary \ref{C.2.5} gives
\begin{align*}
{\big\|R^{\sharp_A} R + RR^{\sharp_A}\big\|}_A &
= {\left\|2 \left(\frac{(\zeta R) + (\zeta R)^{\sharp_A}}{2}\right)^2
+ 2 \left(\frac{(\zeta R) - (\zeta R)^{\sharp_A}}{2i}\right)^2\right\|}_A
\\& \leq 2{\left\|\frac{(\zeta R) + (\zeta R)^{\sharp_A}}{2}\right\|}^2_A
+ 2{\left\|\frac{(\zeta R) - (\zeta R)^{\sharp_A}}{2i}\right\|}^2_A
\\& = 2{\left\|\frac{(\zeta R) + (\zeta R)^{\sharp_A}}{2}\right\|}^2_A
+ 2{\left\|\frac{\zeta (R + \zeta_0I) - (\zeta (R + \zeta_0I))^{\sharp_A}}{2i}\right\|}^2_A
\\& \leq 2w^2_A\big(\zeta R\big) + 2w^2_A\big(\zeta (R + \zeta_0I)\big)
\\& = 2\big(w^2_A(R) +d^2_A(R)\big).
\end{align*}
\end{proof}
%%%%%%%%%%%%%%%%%%%%%%%%%%%%
The following result may be stated as well.
%%%%%%%%%%%%%%%%%%%%%%%%%%%%%%%%%%%%%%
\begin{theorem}\label{T.2.15}
Let $T, S\in\mathbb{B}_{A}(\mathcal{H})$. Then
\begin{align*}
w_A(TS \pm ST) &\leq \sqrt{{\big\|TT^{\sharp_A} + T^{\sharp_A} T\big\|}_A}\sqrt{{\big\|SS^{\sharp_A} + S^{\sharp_A} S\big\|}_A}
\\& \leq 2\min\Big\{{\|T\|}_A\sqrt{w^2_A(S) + d^2_A(S)}, {\|S\|}_A\sqrt{w^2_A(T) + d^2_A(T)}\Big\}
\\& \leq 2\sqrt{2}\min\Big\{{\|T\|}_A w_A(S), {\|S\|}_A w_A(T)\Big\}.
\end{align*}
\end{theorem}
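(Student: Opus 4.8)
The plan is to prove the chain of inequalities in Theorem \ref{T.2.15} from right to left in difficulty. The rightmost inequality is immediate: since $d_A(R)\le w_A(R)$ for every $R\in\mathbb{B}_A(\mathcal{H})$, we have $w_A^2(S)+d_A^2(S)\le 2w_A^2(S)$, and similarly for $T$, which upgrades the middle quantity to the last one. The middle inequality follows directly from Lemma \ref{l.2.13.9}: that lemma gives ${\|T T^{\sharp_A}+T^{\sharp_A}T\|}_A\le 2(w_A^2(T)+d_A^2(T))$ and, since ${\|T T^{\sharp_A}+T^{\sharp_A}T\|}_A\ge {\|T T^{\sharp_A}\|}_A={\|T\|}_A^2$, also ${\|T T^{\sharp_A}+T^{\sharp_A}T\|}_A\le 4{\|T\|}_A^2$. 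Combining these two bounds on the two factors (using $\sqrt{ab}$ with one factor bounded by $2{\|T\|}_A$ via the square-root term and the other by the $d_A$-term, and symmetrically swapping the roles of $T$ and $S$) yields exactly the two entries inside the $\min$.

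The real content is the first inequality, $w_A(TS\pm ST)\le \sqrt{{\|T T^{\sharp_A}+T^{\sharp_A}T\|}_A}\,\sqrt{{\|SS^{\sharp_A}+S^{\sharp_A}S\|}_A}$. Here I would use the characterization from Theorem \ref{T.2.30}: $w_A(TS\pm ST)=\sup_{\theta}{\left\|\tfrac{e^{i\theta}(TS\pm ST)+(e^{i\theta}(TS\pm ST))^{\sharp_A}}{2}\right\|}_A$, and estimate this $A$-operator seminorm by testing against unit vectors, i.e. bound $\left|{\left\langle \tfrac{e^{i\theta}(TS\pm ST)+(e^{i\theta}(TS\pm ST))^{\sharp_A}}{2}x,x\right\rangle}_A\right|$ for ${\|x\|}_A=1$. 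Expanding $(TS)^{\sharp_A}=S^{\sharp_A}T^{\sharp_A}$ and $(ST)^{\sharp_A}=T^{\sharp_A}S^{\sharp_A}$, this inner product becomes a sum of terms of the form ${\langle Sx, (e^{\mp i\theta}\text{-something})x\rangle}_A$; the aim is to group them so that a Cauchy–Schwarz step (in the semi-inner product) peels off one factor involving only $S,S^{\sharp_A}$ applied to $x$ and another involving only $T,T^{\sharp_A}$ applied to $x$. Concretely, I expect to arrive at a bound like ${\|(e^{i\theta}T+e^{-i\theta}T^{\sharp_A})\text{-type vector}\|}_A\cdot{\|(\text{analogous }S\text{-type vector})\|}_A$, and then recognize ${\|(e^{i\theta}T+(e^{i\theta}T)^{\sharp_A})x\|}_A^2={\langle (\cdots)^{\sharp_A}(\cdots)x,x\rangle}_A\le {\|(e^{i\theta}T)(e^{i\theta}T)^{\sharp_A}+\cdots\|}_A$, which after expansion and discarding the $e^{\pm 2i\theta}T^2$ cross-terms (or absorbing them) is controlled by ${\|T T^{\sharp_A}+T^{\sharp_A}T\|}_A$; the same for $S$. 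Taking the supremum over $\theta$ and then over $x$ gives the claim.

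The main obstacle will be the bookkeeping in that Cauchy–Schwarz splitting: the operator $e^{i\theta}(TS\pm ST)+(e^{i\theta}(TS\pm ST))^{\sharp_A}$ has four summands, and one must pair them as $\langle A_1 x, B_1 x\rangle_A + \langle A_2 x, B_2 x\rangle_A$ so that applying Cauchy–Schwarz termwise and then the elementary inequality $\sum a_ib_i\le \sqrt{\sum a_i^2}\sqrt{\sum b_i^2}$ produces clean factors ${\|(e^{i\theta}T+(e^{i\theta}T)^{\sharp_A})x\|}_A$ (or the analogous vector) — getting the phase $e^{i\theta}$ versus $e^{-i\theta}$ to line up correctly, and handling the $\pm$ sign uniformly, is where care is needed. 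A cleaner alternative, which I would try first, is to mimic the classical Kittaneh-type argument: write $TS\pm ST$ using $2\times2$ operator-matrix dilations and apply the already-proven product bounds, but in the semi-Hilbertian setting the substitute for ${\|XX^*+YY^*\|}$ is ${\|X X^{\sharp_A}+\cdots\|}_A$, so one still ends up verifying the same kind of identity. Either way the algebraic identity $M^2+N^2=\bigl(\tfrac{TT^{\sharp_A}+T^{\sharp_A}T}{2}\bigr)^{\sharp_A}$ from the proof of Theorem \ref{T.2.7}, together with ${\|R^{\sharp_A}\|}_A={\|R\|}_A$, will be the tool that converts the vector-norm bounds into the stated operator-seminorm quantities.
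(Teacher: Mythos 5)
Your outline reproduces the correct overall architecture (a Cauchy--Schwarz splitting into a pure $T$-factor and a pure $S$-factor, then conversion to ${\|TT^{\sharp_A}+T^{\sharp_A}T\|}_A$ via Lemma \ref{l.2.13.9}), but the decisive first inequality is not actually established, and the specific route you sketch for it would fail. The paper does not use Theorem \ref{T.2.30} here at all: for ${\|x\|}_A=1$ it writes directly
$\big|{\langle (TS\pm ST)x,x\rangle}_A\big|\le \big|{\langle Sx,T^{\sharp_A}x\rangle}_A\big|+\big|{\langle Tx,S^{\sharp_A}x\rangle}_A\big|\le {\|Sx\|}_A{\|T^{\sharp_A}x\|}_A+{\|Tx\|}_A{\|S^{\sharp_A}x\|}_A$, and then applies the two-term Cauchy--Schwarz inequality to get $\sqrt{{\|Tx\|}_A^2+{\|T^{\sharp_A}x\|}_A^2}\,\sqrt{{\|Sx\|}_A^2+{\|S^{\sharp_A}x\|}_A^2}={\langle x,(T^{\sharp_A}T+TT^{\sharp_A})x\rangle}_A^{1/2}{\langle x,(S^{\sharp_A}S+SS^{\sharp_A})x\rangle}_A^{1/2}$. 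The whole point is that the factors are the \emph{sums of squares} ${\|Tx\|}_A^2+{\|T^{\sharp_A}x\|}_A^2$, which have no cross terms. Your plan instead aims at factors like ${\|(e^{i\theta}T+(e^{i\theta}T)^{\sharp_A})x\|}_A$, whose square expands to ${\langle (T^{\sharp_A}T+(T^{\sharp_A})^{\sharp_A}T^{\sharp_A}+e^{2i\theta}(\cdots)+e^{-2i\theta}(\cdots))x,x\rangle}_A$; the $e^{\pm2i\theta}$ cross terms cannot simply be ``discarded or absorbed'' --- for a fixed $\theta$ the norm of the full operator is not dominated by the norm of its diagonal part, and since Theorem \ref{T.2.30} requires a \emph{supremum} over $\theta$ (not an average), there is no mechanism to kill them. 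Moreover the detour through Theorem \ref{T.2.30} is circular: by Lemma \ref{l.2.2} the quadratic form of $\tfrac{e^{i\theta}R+(e^{i\theta}R)^{\sharp_A}}{2}$ at $x$ is $\big|\mathrm{Re}\big(e^{i\theta}{\langle Rx,x\rangle}_A\big)\big|\le\big|{\langle Rx,x\rangle}_A\big|$, so you end up having to bound $\big|{\langle (TS\pm ST)x,x\rangle}_A\big|$ anyway --- which is where one should start.

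A second, smaller defect: for the middle inequality you invoke ${\|TT^{\sharp_A}+T^{\sharp_A}T\|}_A\le 4{\|T\|}_A^2$, which after taking square roots and multiplying by $\sqrt{2({w^2_A(S)+d^2_A(S)})}$ yields only $2\sqrt2\,{\|T\|}_A\sqrt{w^2_A(S)+d^2_A(S)}$, missing the stated constant by $\sqrt2$. What is needed (and what the paper uses) is the triangle-inequality bound ${\|TT^{\sharp_A}+T^{\sharp_A}T\|}_A\le{\|TT^{\sharp_A}\|}_A+{\|T^{\sharp_A}T\|}_A=2{\|T\|}_A^2$, so that $\sqrt{{\|TT^{\sharp_A}+T^{\sharp_A}T\|}_A}\le\sqrt2\,{\|T\|}_A$ and the product is exactly $2{\|T\|}_A\sqrt{w^2_A(S)+d^2_A(S)}$. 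The rightmost inequality in your write-up is fine.
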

\begin{proof}
Clearly,
\begin{align*}
\min\Big\{{\|T\|}_A\sqrt{w^2_A(S) + d^2_A(S)}&, {\|S\|}_A\sqrt{w^2_A(T) + d^2_A(T)}\Big\}
\\& \leq \sqrt{2}\min\Big\{{\|T\|}_A w_A(S), {\|S\|}_A w_A(T)\Big\}.
\end{align*}
Now, let $x\in \mathcal{H}$ with ${\|x\|}_A = 1$. By the Cauchy--Schwarz inequality, we have
\begin{align*}
\Big|{\big\langle (TS \pm ST) x, x \big\rangle}_A\Big|^2
&\leq \Big(\big|{\langle TS x, x\rangle}_A\big| + \big|{\langle ST x, x\rangle}_A\big|\Big)^2
\\& = \Big(\big|{\langle S x, T^{\sharp_A} x\rangle}_A\big| + \big|{\langle T x, S^{\sharp_A} x\rangle}_A\big|\Big)^2
\\& \leq \Big({\|Sx\|}_A {\|T^{\sharp_A} x\|}_A + {\|Tx\|}_A{\|S^{\sharp_A} x\|}_A\Big)^2
\\& \leq \Big({\|Tx\|}^2_A  + {\|T^{\sharp_A} x\|}^2_A\Big)\Big({\|Sx\|}^2_A + {\|S^{\sharp_A} x\|}^2_A\Big)
\\& = {\Big\langle x, \big(T^{\sharp_A}T + TT^{\sharp_A}\big)x\Big\rangle}_A
{\Big\langle x, \big(S^{\sharp_A}S + SS^{\sharp_A}\big) x\Big\rangle}_A
\\& \leq {\big\|T^{\sharp_A}T + TT^{\sharp_A}\big\|}_A{\big\|S^{\sharp_A}S + SS^{\sharp_A}\big\|}_A.
\end{align*}
Thus
\begin{align*}
\Big|{\big\langle (TS \pm ST) x, x \big\rangle}_A\Big|
\leq \sqrt{{\big\|TT^{\sharp_A} + T^{\sharp_A} T\big\|}_A}\sqrt{{\big\|SS^{\sharp_A} + S^{\sharp_A} S\big\|}_A}.
\end{align*}
Taking the supremum over $x\in \mathcal{H}$ with ${\|x\|}_A = 1$ in the above inequality we get
\begin{align}\label{I.0.T.2.15}
w_A(TS \pm ST) \leq \sqrt{{\big\|TT^{\sharp_A} + T^{\sharp_A} T\big\|}_A}\sqrt{{\big\|SS^{\sharp_A} + S^{\sharp_A} S\big\|}_A}.
\end{align}
From (\ref{I.0.T.2.15}) and Lemma \ref{l.2.13.9} it follows that
\begin{align*}
w_A(TS \pm ST) &\leq \sqrt{{\big\|TT^{\sharp_A} + T^{\sharp_A} T\big\|}_A}\sqrt{{\big\|SS^{\sharp_A} + S^{\sharp_A} S\big\|}_A}
\\& \leq \sqrt{{\big\|TT^{\sharp_A}\big\|}_A + {\big\|T^{\sharp_A} T\big\|}_A}\sqrt{2\big(w^2_A(S) +d^2_A(S)\big)}
\\& \leq 2{\|T\|}_A\sqrt{w^2_A(S) + d^2_A(S)},
\end{align*}
whence
\begin{align}\label{I.1.T.2.15}
w_A(TS \pm ST) &\leq \sqrt{{\big\|TT^{\sharp_A} + T^{\sharp_A} T\big\|}_A}\sqrt{{\big\|SS^{\sharp_A} + S^{\sharp_A} S\big\|}_A}
\nonumber
\\& \leq 2{\|T\|}_A\sqrt{w^2_A(S) + d^2_A(S)}.
\end{align}
Similarly,
\begin{align}\label{I.2.T.2.15}
w_A(TS \pm ST) &\leq \sqrt{{\big\|SS^{\sharp_A} + S^{\sharp_A} S\big\|}_A}\sqrt{{\big\|TT^{\sharp_A} + T^{\sharp_A} T\big\|}_A}
\nonumber
\\& \leq 2{\|S\|}_A\sqrt{w^2_A(T) + d^2_A(T)}.
\end{align}
Hence by (\ref{I.1.T.2.15}) and (\ref{I.2.T.2.15}) we deduce the desired result.
\end{proof}
%%%%%%%%%%%%%%%%%%%%%%%%%%%%%%
As a consequence of Lemma \ref{l.2.13.9} and Theorem \ref{T.2.15}, we have the following result.
%%%%%%%%%%%%%%%%%%%%%%%%%%%%%%
\begin{corollary}\label{C.2.17}
Let $T, S\in\mathbb{B}_{A}(\mathcal{H})$. Then
\begin{align*}
w_A(TS \pm ST) \leq 2\sqrt{w^2_A(T) + d^2_A(T)}\sqrt{w^2_A(S) + d^2_A(S)} \leq 4w_A(T)w_A(S).
\end{align*}
\end{corollary}
%%%%%%%%%%%%%%%%%%%%%%%%%%%%%%%
For the second main result in this section, we need the following lemma
that is interesting on its own right.
%%%%%%%%%%%%%%%%%%%%%%%%%%%%%%
\begin{lemma}\label{l.2.13}
For $T, S, R\in\mathbb{B}_{A}(\mathcal{H})$ the following statements hold.
\begin{itemize}
\item[(i)] $w_A\Big(TRT^{\sharp_A}\Big) \leq {\|T\|}^2_A w_A(R)$.
\item[(ii)] $w_A\Big(SRT^{\sharp_A}\Big) \leq \frac{1}{2} {\big\|TT^{\sharp_A} + SS^{\sharp_A}\big\|}_A {\|R\|}_A$.
\end{itemize}
\end{lemma}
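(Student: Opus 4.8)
The plan is to derive both inequalities straight from the defining supremum $w_A(X) = \sup\{|{\langle Xx, x\rangle}_A| : {\|x\|}_A = 1\}$, repeatedly applying the defining property of the $A$-adjoint, ${\langle Tu, v\rangle}_A = {\langle u, T^{\sharp_A}v\rangle}_A$, to push the outer operators onto the test vector; note that $TRT^{\sharp_A}$ and $SRT^{\sharp_A}$ lie in $\mathbb{B}_{A}(\mathcal{H})$ since the latter is a subalgebra, so all the seminorm identities and $A$-positivity facts from the introduction are available. For (i), fix $x\in\mathcal{H}$ with ${\|x\|}_A = 1$ and write ${\langle TRT^{\sharp_A}x, x\rangle}_A = {\big\langle R(T^{\sharp_A}x), T^{\sharp_A}x\big\rangle}_A$. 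If ${\|T^{\sharp_A}x\|}_A = 0$ there is nothing to prove; otherwise set $y := T^{\sharp_A}x/{\|T^{\sharp_A}x\|}_A$, so that ${\|y\|}_A = 1$ and
\[
\big|{\langle TRT^{\sharp_A}x, x\rangle}_A\big| = {\|T^{\sharp_A}x\|}_A^2\,\big|{\langle Ry, y\rangle}_A\big| \le {\|T^{\sharp_A}x\|}_A^2\, w_A(R) \le {\|T^{\sharp_A}\|}_A^2\, w_A(R),
\]
where the last step uses ${\|T^{\sharp_A}x\|}_A \le {\|T^{\sharp_A}\|}_A{\|x\|}_A$. Since ${\|T^{\sharp_A}\|}_A = {\|T\|}_A$, taking the supremum over $x$ yields (i).

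For (ii), fix $x$ with ${\|x\|}_A = 1$ again. The $A$-adjoint identity gives ${\langle SRT^{\sharp_A}x, x\rangle}_A = {\langle R(T^{\sharp_A}x), S^{\sharp_A}x\rangle}_A$, so by the Cauchy--Schwarz inequality for ${\langle\cdot,\cdot\rangle}_A$ together with ${\|R(T^{\sharp_A}x)\|}_A \le {\|R\|}_A{\|T^{\sharp_A}x\|}_A$,
\[
\big|{\langle SRT^{\sharp_A}x, x\rangle}_A\big| \le {\|R\|}_A\,{\|T^{\sharp_A}x\|}_A\,{\|S^{\sharp_A}x\|}_A \le \frac{{\|R\|}_A}{2}\big({\|T^{\sharp_A}x\|}_A^2 + {\|S^{\sharp_A}x\|}_A^2\big).
\]
Applying the $A$-adjoint identity once more, ${\|T^{\sharp_A}x\|}_A^2 + {\|S^{\sharp_A}x\|}_A^2 = {\langle TT^{\sharp_A}x, x\rangle}_A + {\langle SS^{\sharp_A}x, x\rangle}_A = {\big\langle (TT^{\sharp_A} + SS^{\sharp_A})x, x\big\rangle}_A$. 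Since $TT^{\sharp_A}$ and $SS^{\sharp_A}$ are $A$-positive, so is their sum, and hence ${\big\langle (TT^{\sharp_A} + SS^{\sharp_A})x, x\big\rangle}_A \le {\|TT^{\sharp_A} + SS^{\sharp_A}\|}_A$ by the formula for the $A$-seminorm of an $A$-positive operator recalled in the introduction. Combining these estimates and taking the supremum over $x$ gives (ii).

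Neither step presents a genuine obstacle; the main thing requiring care is the bookkeeping of the $A$-adjoint moves — ensuring that every factor which leaves the inner slot re-enters carrying a $\sharp_A$ — and verifying the $A$-positivity of $TT^{\sharp_A} + SS^{\sharp_A}$ so that the $A$-positive seminorm formula may legitimately be invoked in the final display of (ii).
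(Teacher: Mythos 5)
Your proof is correct and follows essentially the same route as the paper: in (i) both arguments rewrite $\langle TRT^{\sharp_A}x,x\rangle_A$ as $\langle R(T^{\sharp_A}x),T^{\sharp_A}x\rangle_A$ and bound by $w_A(R)\|T^{\sharp_A}x\|_A^2$, and in (ii) both use the $A$-adjoint identity, Cauchy--Schwarz, the submultiplicativity of $\|\cdot\|_A$, and the AM--GM inequality to reach $\tfrac12\langle(TT^{\sharp_A}+SS^{\sharp_A})x,x\rangle_A\|R\|_A$. The only (immaterial) difference is that the paper moves both $S$ and $R$ across the inner product, estimating $\|R^{\sharp_A}S^{\sharp_A}x\|_A\le\|R^{\sharp_A}\|_A\|S^{\sharp_A}x\|_A$, whereas you keep $R$ on the left and estimate $\|R T^{\sharp_A}x\|_A\le\|R\|_A\|T^{\sharp_A}x\|_A$.
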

\begin{proof}
(i) Let $x\in \mathcal{H}$ with ${\|x\|}_A = 1$. We have
\begin{align*}
\Big|{\big\langle TRT^{\sharp_A} x, x \big\rangle}_A\Big| = \Big|{\big\langle RT^{\sharp_A} x, T^{\sharp_A} x \big\rangle}_A\Big|
\leq w_A(R){\|T^{\sharp_A} x\|}^2_A \leq w_A(R){\|T\|}^2_A.
\end{align*}
Now, by taking the supremum over all $x\in \mathcal{H}$ with ${\|x\|}_A = 1$ we conclude that
\begin{align*}
w_A\Big(TRT^{\sharp_A}\Big) \leq {\|T\|}^2_A w_A(R)
\end{align*}
(ii) Let $x\in \mathcal{H}$ with ${\|x\|}_A = 1$. We have
\begin{align*}
\Big|{\big\langle SRT^{\sharp_A} x, x \big\rangle}_A\Big| &= \Big|{\big\langle T^{\sharp_A} x, R^{\sharp_A} S^{\sharp_A} x \big\rangle}_A\Big|
\\& \leq {\|T^{\sharp_A} x\|}_A{\|R^{\sharp_A} S^{\sharp_A} x\|}_A
\\& \leq {\|T^{\sharp_A} x\|}_A{\|S^{\sharp_A} x\|}_A {\|R^{\sharp_A}\|}_A
\\& \leq \frac{1}{2}\Big({\|T^{\sharp_A} x\|}^2_A + {\|S^{\sharp_A} x\|}^2_A\Big){\|R\|}_A
\\& \qquad \qquad \big(\mbox{by the arithmetic geometric mean inequality}\big)
\\& = \frac{1}{2}\Big({\big\langle x, TT^{\sharp_A} x \big\rangle}_A + {\big\langle x, SS^{\sharp_A} x \big\rangle}_A\Big){\|R\|}_A
\\& = \frac{1}{2}{\Big\langle x, \big(TT^{\sharp_A} + SS^{\sharp_A}\big) x \big\rangle}_A{\|R\|}_A
\\& \leq \frac{1}{2} {\big\|TT^{\sharp_A} + SS^{\sharp_A}\big\|}_A {\|R\|}_A,
\end{align*}
which, by taking the supremum over $x\in \mathcal{H}$, ${\|x\|}_A = 1$, implies that
\begin{align*}
w_A\Big(SRT^{\sharp_A}\Big) \leq \frac{1}{2} {\big\|TT^{\sharp_A} + SS^{\sharp_A}\big\|}_A {\|R\|}_A.
\end{align*}
\end{proof}
%%%%%%%%%%%%%%%%%%%%%%%%%%%%
Finally, we present the following result.
%%%%%%%%%%%%%%%%%%%%%%%%%%%%
\begin{theorem}\label{T.2.14}
Let $T, S\in\mathbb{B}_{A}(\mathcal{H})$. Then
\begin{align*}
w_A(TS^{\sharp_A} \pm ST^{\sharp_A}) \leq {\big\|T^{\sharp_A} T + SS^{\sharp_A}\big\|}_A.
\end{align*}
\end{theorem}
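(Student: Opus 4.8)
The plan is to reduce the statement to Lemma \ref{l.2.13}(ii) by a clever choice of the operators appearing there. Recall that Lemma \ref{l.2.13}(ii) says $w_A(SRT^{\sharp_A}) \leq \frac{1}{2}{\big\|TT^{\sharp_A} + SS^{\sharp_A}\big\|}_A{\|R\|}_A$. The right-hand side of the theorem is ${\big\|T^{\sharp_A} T + SS^{\sharp_A}\big\|}_A$, so I expect to apply the lemma with the roles arranged so that the ``$TT^{\sharp_A}$'' slot becomes $T^{\sharp_A}T$; that is, I would invoke the lemma with $T$ replaced by $T^{\sharp_A}$ (whose own $\sharp_A$ is $(T^{\sharp_A})^{\sharp_A} = PTP$, not quite $T$, so I must be a little careful about whether $TS^{\sharp_A}$ or $PTPS^{\sharp_A}$ appears — this is the one genuinely delicate point).

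First I would write $2\big(TS^{\sharp_A} \pm ST^{\sharp_A}\big)$ as a combination that exhibits the needed $\sharp_A$-structure. The natural identity to use is
\begin{align*}
TS^{\sharp_A} \pm ST^{\sharp_A} = \begin{bmatrix} T & \pm S \end{bmatrix}\begin{bmatrix} S^{\sharp_A} \\ T^{\sharp_A}\end{bmatrix},
\end{align*}
i.e. a $1\times 2$ times $2\times 1$ block-operator product, which invites applying the $2\times 2$ operator matrix framework over $\big(\mathcal{H}\oplus\mathcal{H}, {\|\cdot\|}_{\mathbb{A}}\big)$ with $\mathbb{A} = \mathrm{diag}(A,A)$. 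In that enlarged semi-Hilbertian space the operator $\mathcal{T} = \begin{bmatrix} 0 & T \\ 0 & S\end{bmatrix}$ has $\mathcal{T}^{\sharp_{\mathbb{A}}} = \begin{bmatrix} 0 & 0 \\ T^{\sharp_A} & S^{\sharp_A}\end{bmatrix}$, and one computes that the anticommutator-type operator $\mathcal{T}^{\sharp_{\mathbb{A}}}\mathcal{T} + \mathcal{T}\mathcal{T}^{\sharp_{\mathbb{A}}}$ has a block that, after conjugating by the unitary $\frac{1}{\sqrt2}\begin{bmatrix} I & \pm I \\ I & \mp I\end{bmatrix}$, produces $TS^{\sharp_A}\pm ST^{\sharp_A}$ in an off-diagonal corner while the diagonal picks up $T^{\sharp_A}T + SS^{\sharp_A}$. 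Then Corollary \ref{C.2.5} (or rather the observation that the $A$-numerical radius of an off-diagonal block is dominated by that of the whole matrix, combined with $w_A$ of an $A$-selfadjoint operator equalling its norm via Lemma \ref{l.2.1}) finishes it.

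Concretely, the cleanest route avoids block matrices: apply Lemma \ref{l.2.13}(ii) directly with the substitution $(T,S,R) \rightsquigarrow (T^{\sharp_A}, S, I)$ to get $w_A(S\,T^{\sharp_A}) = w_A(S\,I\,(T^{\sharp_A})) $ — but this only bounds a single term, not the sum, so instead I would use the polarization-type trick already exploited repeatedly in the paper: for each $\theta\in\mathbb R$ expand ${\big\|e^{i\theta}(TS^{\sharp_A}\pm ST^{\sharp_A}) + (\cdots)^{\sharp_A}\big\|}_A$, recognize the resulting expression as the $\mathbb{A}$-norm of $\mathcal{U}^{\sharp_{\mathbb A}}\mathcal{U}$ for a suitable $\mathcal U$ built from $T,S$, bound it by $\|\mathcal U\|_{\mathbb A}^2$, and identify $\|\mathcal U\|_{\mathbb A}^2 = {\big\|T^{\sharp_A}T + SS^{\sharp_A}\big\|}_A$. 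The main obstacle I anticipate is bookkeeping with $(T^{\sharp_A})^{\sharp_A} = PTP \neq T$: one must check that the projection $P$ either cancels because $AR = ARP$ for operators with $A$-adjoints, or that $A$-norms and $A$-numerical radii are insensitive to pre/post-composition with $P$ on $\overline{\mathcal R(A)}$ — both are standard from the facts recalled in the introduction, but they are exactly where an incorrect proof would slip.
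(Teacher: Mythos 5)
Your proposal is a set of three alternative plans rather than a proof, and none of them is carried to the point where the stated inequality actually follows; moreover, the one concrete computation you assert does not check out. With $\mathcal{T}=\begin{bmatrix}0&T\\0&S\end{bmatrix}$ and $\mathbb{A}=\mathrm{diag}(A,A)$ one gets $\mathcal{T}\mathcal{T}^{\sharp_{\mathbb{A}}}=\begin{bmatrix}TT^{\sharp_A}&TS^{\sharp_A}\\ST^{\sharp_A}&SS^{\sharp_A}\end{bmatrix}$ and $\mathcal{T}^{\sharp_{\mathbb{A}}}\mathcal{T}=\begin{bmatrix}0&0\\0&T^{\sharp_A}T+S^{\sharp_A}S\end{bmatrix}$; conjugating the anticommutator $\begin{bmatrix}a&b\\c&d\end{bmatrix}$ by $\frac{1}{\sqrt2}\begin{bmatrix}I&I\\I&-I\end{bmatrix}$ produces off-diagonal corners $\frac12(a-b+c-d)$ and $\frac12(a+b-c-d)$, which still contain the diagonal blocks $a$ and $d$; they are \emph{not} $TS^{\sharp_A}\pm ST^{\sharp_A}$, and the diagonal does not become $T^{\sharp_A}T+SS^{\sharp_A}$. (Even if it did, you would still need, and do not supply, a lemma comparing $w_A$ of a corner block with $w_{\mathbb{A}}$ of the full matrix.) Your ``cleanest route'' fares no better: $\mathcal{U}$ is never specified, and the expansion $e^{i\theta}(TS^{\sharp_A}+ST^{\sharp_A})+\big(e^{i\theta}(TS^{\sharp_A}+ST^{\sharp_A})\big)^{\sharp_A}$ consists purely of cross terms, so it cannot be of the form $\mathcal{U}^{\sharp_{\mathbb{A}}}\mathcal{U}$ (any such ``square'' necessarily carries the diagonal terms $TT^{\sharp_A}$ and $SS^{\sharp_A}$ as well, and subtracting them off loses a factor of $2$). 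Finally, you correctly flag $(T^{\sharp_A})^{\sharp_A}=PTP\neq T$ as ``exactly where an incorrect proof would slip'' and then leave that point unresolved. A further caution: every natural computation here (including the paper's own) yields ${\|TT^{\sharp_A}+SS^{\sharp_A}\|}_A$ on the right-hand side rather than the mixed expression ${\|T^{\sharp_A}T+SS^{\sharp_A}\|}_A$ appearing in the statement, so the identification you announce at the end is not one you could actually verify.

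For comparison, the paper's argument fixes $\theta$, uses ${\|R\|}_A={\|R^{\sharp_A}\|}_A$ and $\big((R^{\sharp_A})^{\sharp_A}\big)^{\sharp_A}=R^{\sharp_A}$ to rewrite $e^{i\theta}\big(TS^{\sharp_A}+ST^{\sharp_A}\big)^{\sharp_A}+\Big(e^{i\theta}\big(TS^{\sharp_A}+ST^{\sharp_A}\big)^{\sharp_A}\Big)^{\sharp_A}$ as $Y+Y^{\sharp_A}$ with $Y=(T^{\sharp_A})^{\sharp_A}(e^{i\theta}I+e^{-i\theta}I)S^{\sharp_A}$, bounds ${\big\|\frac{Y+Y^{\sharp_A}}{2}\big\|}_A$ by $w_A(Y)=w_A\big(S(e^{i\theta}I+e^{-i\theta}I)T^{\sharp_A}\big)$ via Lemma~\ref{l.2.1.5}, applies Lemma~\ref{l.2.13}(ii) with $R=(e^{i\theta}+e^{-i\theta})I$, and concludes with Theorem~\ref{T.2.30}; this is the algebraic identity your sketch gestures at but never writes down. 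If you want a short self-contained proof, imitate the proof of Theorem~\ref{T.2.15}: for ${\|x\|}_A=1$ one has ${\langle (TS^{\sharp_A}+ST^{\sharp_A})x,x\rangle}_A={\langle S^{\sharp_A}x,T^{\sharp_A}x\rangle}_A+{\langle T^{\sharp_A}x,S^{\sharp_A}x\rangle}_A=2\,\mathrm{Re}\,{\langle S^{\sharp_A}x,T^{\sharp_A}x\rangle}_A$, hence $\big|{\langle (TS^{\sharp_A}+ST^{\sharp_A})x,x\rangle}_A\big|\leq{\|T^{\sharp_A}x\|}^2_A+{\|S^{\sharp_A}x\|}^2_A={\langle (TT^{\sharp_A}+SS^{\sharp_A})x,x\rangle}_A\leq{\|TT^{\sharp_A}+SS^{\sharp_A}\|}_A$; take the supremum and replace $T$ by $iT$ for the minus sign. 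Either route requires the explicit computation that is missing from your proposal.
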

\begin{proof}
Let $\theta \in \mathbb{R}$. We have
\begin{align*}
&{\left\|\frac{e^{i\theta}(TS^{\sharp_A} + ST^{\sharp_A}) + \big(e^{i\theta}(TS^{\sharp_A} + ST^{\sharp_A})\big)^{\sharp_A}}{2}\right\|}_A
\\& \qquad \qquad = {\left\|\frac{\Big(e^{i\theta}(TS^{\sharp_A} + ST^{\sharp_A})\Big)^{\sharp_A} + \Big(\big(e^{i\theta}(TS^{\sharp_A} + ST^{\sharp_A})\big)^{\sharp_A}\Big)^{\sharp_A}}{2}\right\|}_A
\\& \qquad \qquad \qquad \qquad \qquad \qquad\big(\mbox{since ${\|R^{\sharp_A}\|}_A = {\|R\|}_A$ for every $R\in\mathbb{B}_{A}(\mathcal{H})$}\big)
\\& \qquad \qquad = {\left\|\frac{(T^{\sharp_A})^{\sharp_A}(e^{i\theta} I + e^{-i\theta}I)S^{\sharp_A}  + \Big((T^{\sharp_A})^{\sharp_A}(e^{i\theta} I + e^{-i\theta}I)S^{\sharp_A}\Big)^{\sharp_A}}{2}\right\|}_A
\\& \qquad \qquad \qquad \qquad \qquad \qquad\big(\mbox{since $\big((R^{\sharp_A})^{\sharp_A}\big)^{\sharp_A} = R^{\sharp_A}$ for every $R\in\mathbb{B}_{A}(\mathcal{H})$}\big)
\\& \qquad \qquad \leq w_A\Big((T^{\sharp_A})^{\sharp_A}(e^{i\theta} I + e^{-i\theta}I)S^{\sharp_A}\Big)
\qquad \qquad \qquad\big(\mbox{by Lemma \ref{l.2.1.5}}\big)
\\& \qquad \qquad = w_A\Big(S(e^{-i\theta} I + e^{i\theta}I)T^{\sharp_A}\Big)
\\& \qquad \qquad \qquad \qquad \qquad \qquad\big(\mbox{since $w_A(R^{\sharp_A}) = w_A(R)$ for every $R\in\mathbb{B}_{A}(\mathcal{H})$}\big)
\\& \qquad \qquad \leq \frac{1}{2} {\big\|TT^{\sharp_A} + SS^{\sharp_A}\big\|}_A {\big\|e^{-i\theta} I + e^{i\theta}I\big\|}_A
\qquad \qquad\big(\mbox{by Lemma \ref{l.2.13} (ii)}\big)
\\& \qquad \qquad = {\big\|TT^{\sharp_A} + SS^{\sharp_A}\big\|}_A.
\end{align*}
Thus
\begin{align*}
{\left\|\frac{e^{i\theta}(TS^{\sharp_A} + ST^{\sharp_A}) + \big(e^{i\theta}(TS^{\sharp_A} + ST^{\sharp_A})\big)^{\sharp_A}}{2}\right\|}_A
\leq {\big\|TT^{\sharp_A} + SS^{\sharp_A}\big\|}_A,
\end{align*}
and so,
\begin{align*}
\displaystyle{\sup_{\theta \in \mathbb{R}}}
{\left\|\frac{e^{i\theta}(TS^{\sharp_A} + ST^{\sharp_A}) + \big(e^{i\theta}(TS^{\sharp_A} + ST^{\sharp_A})\big)^{\sharp_A}}{2}\right\|}_A
\leq {\big\|TT^{\sharp_A} + SS^{\sharp_A}\big\|}_A.
\end{align*}
Then, by Theorem \ref{T.2.30}, we get
\begin{align}\label{I.1.T.2.14}
w_A(TS^{\sharp_A} + ST^{\sharp_A}) \leq {\big\|T^{\sharp_A} T + SS^{\sharp_A}\big\|}_A.
\end{align}
Finally, by replacing $T$ by $iT$ in (\ref{I.1.T.2.14}), we obtain
\begin{align*}
w_A(TS^{\sharp_A} - ST^{\sharp_A}) \leq {\big\|T^{\sharp_A} T + SS^{\sharp_A}\big\|}_A,
\end{align*}
and the proof is completed.
\end{proof}
%%%%%%%%%%%%%%%%%%%%%%%%%%%%%%%%

{\bf Acknowledgments.} 
The author expresses his gratitude to the referee for his/hers comments towards an improved final
version of the paper. He would also like to thank Professor M. S. Moslehian for his helpful suggestions.
This work was supported by a grant from Shanghai Municipal Science and Technology Commission (18590745200).
%%%%%%%%%%%%%%%%%%%%%%%%%%%%%%%%
\bibliographystyle{amsplain}

\end{document}